\pgfplotsset{compat=1.17}
\newcommand{\datafile}{}
\newcommand{\Tra}{{\sf T}}
\newcommand{\V}[2][]{{\bm{#1\mathbf{\MakeLowercase{#2}}}}} 
\newcommand{\M}[2][]{{\bm{#1\mathbf{\MakeUppercase{#2}}}}} 
\newcommand{\T}[2][]{\boldsymbol{#1\mathscr{\MakeUppercase{#2}}}} 
\newcommand{\Mz}[3][]{\M[#1]{#2}_{(#3)}}
\newcommand{\ca}{\oast}
\newcommand{\cl}{\varolessthan}
\newcommand{\cg}{\varogreaterthan}
\newcommand{\mb}[1]{\mathbb{#1}}
\newcommand{\bmat}[1]{\begin{bmatrix} #1 \end{bmatrix}}
\newcommand{\update}[1]{\textcolor{blue}{#1}}
\Crefname{ALC@unique}{Line}{Lines}
\crefname{section}{\S}{\S}
\crefname{subsection}{\S}{\S}
\newcommand{\HOSVD}{HOSVD}
\newcommand{\STHOSVD}{STHOSVD} 
\newcommand{\rHOSVD}{rHOSVD} 
\newcommand{\rSTHOSVD}{rSTHOSVD}
\newcommand{\rHOSVDkron}{rHKron} 
\newcommand{\rSTHOSVDkron}{rSTHKron} 
\newcommand{\rHOSVDkronreuse}{rHKron-re}
\newcommand{\isTTM}{in-sequence multi-TTM}
\newcommand{\aaoTTM}{all-at-once multi-TTM}
\crefname{algorithm}{Alg.}{Algs.}
\title{Parallel Randomized Tucker Decomposition Algorithms\thanks{This work is supported by the National Science Foundation under
Grant No. CCF-1942892.}}
\author{Rachel Minster\thanks{Wake Forest University, Winston-Salem, NC (\email{minsterr@wfu.edu}, \email{ballard@wfu.edu})} \and Zitong Li\thanks{University of California Irvine, Irvine, CA (\email{zitongl5@uci.edu})} \and Grey Ballard\footnotemark[2]}
\begin{document}

\maketitle

\begin{abstract}
\addcontentsline{toc}{section}{Abstract}
The Tucker tensor decomposition is a natural extension of the singular value decomposition (SVD) to multiway data.
We propose to accelerate Tucker tensor decomposition algorithms by using randomization and parallelization.
We present two algorithms that scale to large data and many processors, significantly reduce both computation and communication cost compared to previous deterministic and randomized approaches, and obtain nearly the same approximation errors.
The key idea in our algorithms is to perform randomized sketches with Kronecker-structured random matrices, which reduces computation compared to unstructured matrices and can be implemented using a fundamental tensor computational kernel.
We provide probabilistic error analysis of our algorithms and implement a new parallel algorithm for the structured randomized sketch. 
Our experimental results demonstrate that our combination of randomization and parallelization achieves accurate Tucker decompositions much faster than alternative approaches. We observe up to a 16$\times$ speedup over the fastest deterministic parallel implementation on 3D simulation data.
\end{abstract}

\section{Introduction}
\label{sec:intro}

Tucker decompositions are low-rank tensor approximations capable of approximating multidimensional data with large compression rates while maintaining high accuracy. Large scale multidimensional data arises from many applications such as simulations of partial differential equations, data mining, facial recognition, and imaging. Processing these data requires computationally efficient methods. 
Randomized algorithms have been used to efficiently compute Tucker decompositions in works such as \cite{randtucker_survey,batselier2018computing,che2019randomized,che2021efficient,minster2020randomized,sun2020low,wolf2019low,zhou2014decomposition}, but the growing size of data is outpacing even randomized algorithms. 
Scaling these methods to handle large data calls for efficient parallelization. Many high-performance implementations of deterministic algorithms have been developed for Tucker decompositions \cite{BKK20,CC+17,KU16,choi2018high,LFB21}. 
We develop both sequential and parallel randomized algorithms that efficiently compute Tucker decompositions of large-scale multidimensional data by reducing both computation and communication compared to previous work. 

As we review in \cref{sec:background}, there are two dominant computational kernels to computing a Tucker decomposition: computing matrix singular value decompositions (SVD) and computing tensor-times-matrix (TTM) products. 
For the deterministic algorithms \HOSVD{} \cite{de2000multilinear} (\cref{alg:hosvd}) and \STHOSVD{} \cite{vann2012new} (\cref{alg:sthosvd}), computing the SVD is the typical bottleneck, and various methods trade off accuracy for reduced computational complexity. 
Our goal is to reduce the complexity of the SVD computation via randomization and remove it as the dominant cost without sacrificing too much accuracy. 
Existing randomized Tucker approaches, discussed in \cref{sec:related}, apply low-rank matrix approximation algorithms in place of matrix SVDs.
These matrix algorithms include randomized range finder \cite{halko2011finding} (RRF, see \cref{alg:rrf}), which computes part of the low-rank approximation and involves a slight overestimate of the target rank, or randomized SVD \cite{halko2011finding} (RandSVD, see \cref{alg:randsvd}), which involves a second pass over the data to obtain the final approximation with the exact target rank.

We propose two randomized algorithms in \cref{sec:seq_algs}, one based on \HOSVD{} and one based on \STHOSVD{}, which have comparable accuracy and running time.
In our algorithms, we use the RRF approach with Kronecker-structured random matrices, which reduces the computational complexity of the sketch compared to previous randomized approaches. 
As a significant added practical benefit, the Kronecker structure reduces the amount of random number generation compared to unstructured random matrices such as Gaussian. 
Furthermore, we propose a deterministic truncation of the resulting core (with overestimated ranks) in order to achieve the exact target ranks, obtaining the same effect as RandSVD-based approaches at much lower cost.
We show that our \HOSVD{}-based algorithm can be as computationally efficient as our \STHOSVD{}-based algorithm by employing a dimension tree optimization to avoid recomputation across sketches using memoization.

To accompany our algorithms, we develop probabilistic error guarantees in \cref{sec:erroranalysis} for a randomized matrix algorithm using a Kronecker product of random matrices. We use the matrix results to obtain theoretical guarantees for our Tucker algorithms.
Our bounds differ from previous results by accounting for the Kronecker structure and rank truncation in our algorithms and by reducing the probability of failure and amplification factors.

In \cref{sec:parallel}, we describe the parallelization of our proposed algorithms for distributed memory using the TuckerMPI library \cite{BKK20}, allowing us to scale the algorithms to large datasets that cannot be processed on a single server.
While previous work has combined randomization and parallelization, our implementation is the first to parallelize the randomized sketch, which significantly reduces the computational cost.
Moreover, in exploiting the Kronecker structure of our sketch, we implement a new parallel algorithm that communicates less data than the algorithm used by TuckerMPI and in fact minimizes interprocessor communication for the computation \cite{ABGKR22-TR-MTTM}.

Our experimental results are presented in \cref{sec:experiments}.
We validate the error guarantees of \cref{sec:erroranalysis} and show empirically that our structured random matrices are just as accurate as standard Gaussian random matrices.
Using synthetic data as well as two large simulation datasets, we demonstrate that our parallel randomized algorithms given in \cref{sec:parallel} scale well to thousands of cores and outperform alternative deterministic and randomized algorithms, achieving speedups of up to $16\times$ over the state-of-the-art implementation of the best deterministic algorithm.

\section{Background}\label{sec:background}
We first review the relevant background on tensors and randomized algorithms for matrices. For more details on tensors, see \cite{kolda2009tensor}, and for more details on randomized algorithms, see \cite{halko2011finding}.
\subsection{Tensor notation and operations}
A tensor $\T{X} \in \mb{R}^{n_1 \times n_2 \times \dots \times n_d}$ is a $d$-way array. We can unfold a $d$-mode tensor along each of its modes, or dimensions; the mode-$j$ unfolding, denoted $\M{X}_{(j)}$, is a matrix with columns formed as the mode-$j$ fibers of the tensor. Let $\| \cdot \|$ denote the tensor norm, which generalizes the matrix Frobenius norm. Since the following products will be frequently used to describe the sizes and ranks of a tensor, we define the following notations:
$n^{\ca} = \prod_{k=1}^{d}n_k$, $\, n^{\cl}_{i} = \prod_{k=1}^{i-1}n_k$, $\, n^{\cg}_{i} = \prod_{k=i+1}^{d}n_k$, $n^{\oslash}_{i} = \prod_{k\neq i}n_k$.

One key operation for tensors is the tensor-times-matrix product, or TTM. A tensor $\T{X} \in \mb{R}^{n_1 \times n_2 \times \dots \times n_d}$ is multiplied along mode $j$ by a matrix $\M{A} \in \mb{R}^{m \times n_j}$, denoted by $\T{X} \times_j \M{A}$, to obtain a tensor $\T{Y} \in \mb{R}^{n_1 \times \dots \times n_{j-1} \times m \times n_{j+1} \times \dots \times n_d}$. This product can also be expressed in terms of its mode-$j$ unfolding as $\Mz{Y}{j} = \M{A}\Mz{X}{j}$. We can also multiply a tensor $\T{X} \in \mb{R}^{n_1 \times \dots \times n_d}$ by up to $d$ matrices $\M{A}_j \in \mb{R}^{m_j \times n_j}$, $j = 1,\dots,d$, across distinct modes to obtain $\T{Y} = \T{X} \times_1 \M{A}_1 \times_2 \M{A}_2 \times \dots \times_d \M{A}_d \in \mb{R}^{m_1 \times \dots \times m_d}$. We call this product a multi-TTM; it is also known as a multilinear multiplication. If unfolded along mode $j$, we have 
$\Mz{Y}{j} = \M{A}_j \Mz{X}{j} \left( \M{A}_d \otimes \dots \otimes \M{A}_{j+1} \otimes \M{A}_{j-1} \otimes \dots \otimes \M{A}_1 \right)^\top,$
where $\otimes$ is the matrix Kronecker product.

\subsection{Tucker Decomposition}
\label{ssec:tucker}
The Tucker decomposition of a given tensor $\T{X} \in \mb{R}^{n_1 \times \dots \times n_d}$ of multirank $\V{r} = (r_1,\dots,r_d)$, where $r_j = \text{rank}(\Mz{X}{j})$ for each $j$, represents $\T{X}$ as the product of a core tensor $\T{G} \in \mb{R}^{r_1 \times \dots \times r_d}$ and $d$ factor matrices $\M{U}_j \in \mb{R}^{n_j \times r_j}$ such that $\T{X} = \T{G} \times_1 \M{U}_1 \times \dots \times_d \M{U}_d$. We can also obtain a low-rank approximation to $\T{X}$ in the Tucker form by taking the target rank $(r_1, \dots, r_d)$, or size of the core tensor $\T{G}$, to be less than the ranks of the unfoldings in each mode. 

\paragraph{Higher-Order SVD (HOSVD) and Sequentially Truncated HOSVD}
Two algorithms that compute low-rank Tucker decompositions of tensors are the higher-order SVD (HOSVD) \cite{de2000multilinear} and sequentially truncated HOSVD (STHOSVD) \cite{vann2012new}. The HOSVD algorithm forms each factor matrix $\M{U}_j$ from the first $r_j$ left singular vectors of the mode unfolding $\Mz{X}{j}$, and once all the factor matrices are computed, computes the core tensor as $\T{G} = \T{X} \times_1 \M{U}_1^\top \times \dots \times_d \M{U}_d^\top$ (see \cref{alg:hosvd}).

\vspace{-.4cm}
\hspace*{-\parindent}%
\begin{minipage}[t]{.47\textwidth}
\begin{algorithm}[H]
\caption{\HOSVD{} \cite{de2000multilinear}}
\label{alg:hosvd}
\begin{algorithmic}[1]\footnotesize
\Function{[$\T{G}, \{\M{U}_j\}] = $ \HOSVD}{$\T{X}, \V{r}$}
\For{$j=1:d$}
\State $\M{U}_j =$ first $r_j$ left sing. vecs. of $\Mz{X}{j}$
\EndFor
\State $\T{G} = \T{X} \times_1 \M{U}_1^\top \times \dots \times_d \M{U}_d^\top$
\EndFunction
\end{algorithmic}
\end{algorithm}
\end{minipage}
\begin{minipage}[t]{.51\textwidth}
\begin{algorithm}[H]
\caption{\STHOSVD{} \cite{vann2012new}}
\label{alg:sthosvd}
\begin{algorithmic}[1]\footnotesize
\Function{[$\T{G}, \{\M{U}_j\}] = $ \STHOSVD}{$\T{X}, \V{r}$}
\State $\T{G} = \T{X}$
\For{$j=1:d$}
\State $\M{U}_j =$ first $r_j$ left singular vecs. of $\Mz{G}{j}$
\State $\T{G} = \T{G} \times_j \M{U}_j^\top$
\EndFor
\EndFunction
\end{algorithmic}
\end{algorithm}
\end{minipage}
\vspace*{.2cm}

The STHOSVD algorithm is similar to HOSVD, but instead of handling all modes independently, it processes the modes in a predetermined sequence. 
After the first factor matrix is computed from the first $r_j$ left singular vectors of $\Mz{X}{j}$, we truncate in that mode by computing a partially truncated core tensor via a TTM with the factor matrix, $\T{G} \times_j \M{U}_j^\top$. We then use the partially truncated core $\T{G}$ for the next mode instead of the full tensor $\T{X}$, as shown in \cref{alg:sthosvd}.

\subsection{Randomized Matrix Algorithms}
The randomized range finder algorithm, made popular by \cite{halko2011finding} and shown in \cref{alg:rrf}, efficiently computes a low-rank representation of a matrix $\M{M} \in \mb{R}^{m \times n}$. 
Given a target rank $r$ and oversampling parameter $p$, we multiply $\M{M}$ by a random matrix $\M{\Omega} \in \mb{R}^{n \times \ell}$ with $\ell = r+p$ such that $\ell < m$, to form $\M{Y} \in \mb{R}^{m \times \ell}$, a matrix made up of random linear combinations of the columns of $\M{M}$. 
We then compute a thin QR decomposition of $\M{Y}$ to obtain a matrix $\M{Q} \in \mb{R}^{m \times \ell}$ whose range is a good estimate of the range of $\M{M}$. 
Projecting $\M{M}$ onto the range of $\M{Q}$ gives us the low-rank approximation $\M{M} \approx \M{QQ}^\top \M{M}$. 
We can choose any random distribution for random matrix $\M{\Omega}$. 
In this paper, we will consider both subsampled random Hadamard transform (SRHT) and standard Gaussian matrices.

Note that the resulting approximation from \cref{alg:rrf} is actually rank-$\ell$. 
If we seek a rank-$r$ approximation, further truncation is necessary. 
One way of truncating is to take a thin SVD of $\M{Q}^\top \M{M}$, which is the process taken in the randomized SVD algorithm in \cite{halko2011finding}, reproduced in \cref{alg:randsvd}.
We will adapt this truncation method in the algorithms developed in later sections.

\vspace{-.4cm}
\hspace*{-\parindent}%
\begin{minipage}[t]{.47\textwidth}
\begin{algorithm}[H]
\caption{Randomized Range Finder \cite{halko2011finding}}
\label{alg:rrf}
\begin{algorithmic}[1]\footnotesize
\Function{$\M{Q} = $ RandRangeFinder}{$\M{M},\M{\Omega}$}
\State $\M{Y} = \M{M \Omega}$
\State Compute thin QR $\M{Y} = \M{QR}$
\EndFunction
\end{algorithmic}
\end{algorithm}
\end{minipage}
\begin{minipage}[t]{.51\textwidth}
\begin{algorithm}[H]
\caption{Randomized SVD \cite{halko2011finding}}
\label{alg:randsvd}
\begin{algorithmic}[1]\footnotesize
\Function{$[\M{U},\M{\Sigma},\M{V}]=$ RandSVD}{$\M{M},r,\M{\Omega}$}
\State $\M{Q} = $ RandRangeFinder$(\M{M},\M{\Omega})$
\State $\M{B} = \M{Q}^\top \M{M}$
\State Compute thin SVD $\M{B} = \M[\hat]{U}\M{\Sigma}\M{V}^\top$
\State $\M{U} = \M{Q}\M[\hat]{U}(:,1:r)$
\State Truncate $\M{\Sigma} =(1:r,1:r)$, $\M{V} = \M{V}(:,1:r)$
\EndFunction
\end{algorithmic}
\end{algorithm}
\end{minipage}
\vspace{-.12cm}

\section{Related Work}
\label{sec:related}

Our work builds on three different categories of previous work, namely randomized algorithms for Tucker decompositions, probabilistic analysis of randomized algorithms, and parallel algorithms for tensor computations. 
\paragraph{Randomized Algorithms}
There has been much previous work on randomized algorithms for Tucker decompositions; a good survey of this work can be found in \cite{randtucker_survey}. The basic algorithms for randomized HOSVD and randomized STHOSVD are proposed in \cite{zhou2014decomposition}, while later work improves on the algorithms in various ways. One important distinction among randomized algorithms is the rank of the output approximation. In \cite{wolf2019low,zhou2014decomposition}, the approximation has rank $\V{\ell} = \V{r}+p$ as the randomized range finder (\cref{alg:rrf}) is used without additional truncation. Other algorithms, such as those presented in \cite{che2019randomized,sun2020low} do not oversample at all, limiting the potential accuracy of their methods. In \cite{che2021efficient}, the authors use the randomized range finder, but truncate by only taking the first $r_j$ columns of each factor matrix. The randomized SVD algorithm (\cref{alg:randsvd}) can be applied instead to both oversample and more accurately obtain the desired rank-$\V{r}$ approximation, which is done in \cite{batselier2018computing,minster2020randomized}. Our approach is most similar to the randomized SVD approach, but we apply it in a holistic manner, as discussed in \cref{ssec:rsimp}.

Another common improvement to the basic randomized algorithms comes from exploiting structure in the random matrices used to reduce storage and/or computational costs, as well as the number of random entries generated. 
Khatri-Rao products of random matrices are used in \cite{che2019randomized,sun2020low}, compact random matrices are employed in \cite{batselier2018computing}, and Kronecker products of random matrices are used in \cite{che2020computation,che2021efficient}. 
Our work is most similar to \cite{che2021efficient} as we also employ Kronecker products, but our algorithms improve upon those in \cite{che2021efficient} by truncating to the desired target rank in a more accurate manner. 
Kronecker product structure has also been exploited in other tensor decompositions besides Tucker decompositions: in \cite{battaglino2018practical,jin2021faster}, Kronecker products of random matrices are used to accelerate algorithms for CP decompositions; while in \cite{daas2021randomized}, Kronecker product structure was exploited in the context of the tensor-train decomposition. 
We also discuss how to implement our algorithms on distributed systems and provide improved probabilistic analysis. 

\paragraph{Error Analysis}
To accompany the discussed randomized algorithms, other work has developed probabilistic error analysis. 
Analysis for the standard version of randomized HOSVD is presented in \cite{erichson2020randomized,minster2020randomized}, and for the standard randomized STHOSVD algorithm in \cite{che2019randomized,minster2020randomized} for Khatri-Rao products of Gaussian matrices and dense Gaussian random matrices, respectively. 
Previous error analysis has been done for a randomized STHOSVD algorithm employing Kronecker products of subsampled randomized Fourier transform (SRFT) matrices  in \cite{che2021efficient}, but we make several improvements on this work. 
Our error bound, for our algorithms with Kronecker products of the real-valued equivalent of SRFTs, i.e., subsampled randomized Hadamard transform (SRHT) matrices, has an improved error constant and a smaller probability of failure. %

\paragraph{Parallel Algorithms} 
Our parallel algorithms and implementation, described in \cref{sec:parallel}, are built upon the foundation of TuckerMPI \cite{BKK20} and its improvements \cite{LFB21}.
TuckerMPI is a C++/MPI library that implements the STHOSVD algorithm to compute Tucker decompositions of large dense tensors that are distributed across machines.
It implements many other utilities such as file I/O and subroutines such as parallel TTM, that we use in our algorithms and experiments.
Other parallel implementations of Tucker algorithms have been developed for both dense \cite{CC+17} and sparse \cite{KU16} tensors.
The most similar work to ours combines parallelism and randomization to compute Tucker decompositions of dense tensors \cite{choi2018high}.
The approach taken by Choi, Liu, and Chakaravarthy \cite{choi2018high} is to employ STHOSVD (\cref{alg:sthosvd}) and compute the SVD of $\Mz{G}{j}$ by computing its Gram matrix in parallel and then sequentially applying randomized SVD (\cref{alg:randsvd}) to the Gram matrix.
Our approach differs in that we parallelize the randomized algorithm and avoid the Gram matrix computation; we provide a more detailed comparison in \cref{ssec:comparison}.

We propose a novel implementation of a parallel algorithm for the Multi-TTM computation in \cref{sec:aao-mttm}.
Communication lower bounds for this computation and theoretical algorithms that achieve those lower bounds are presented in \cite{ABGKR22-TR-MTTM}.
The Multi-TTM algorithm that we present in this paper can be seen as a specialization of \cite[Alg.~8.1]{ABGKR22-TR-MTTM}, but our implementation is novel.
We also highlight previous optimizations of tensor computations using dimension trees, a memoization technique.
First introduced in \cite{PTC13a} in the context of computing gradients of the CP decomposition optimization problem, dimension trees have also been used for Tucker decompositions.
For example, the Higher-Order Orthogonal Iteration (e.g., \cite{LDV00}) benefits from storing and reusing intermediate quantities across tensor modes as demonstrated in \cite{KR19}.
We use the dimension tree approach in a different context in one of our randomized algorithms; this process is described in \cref{sec:dimension_tree}.

\section{Sequential Algorithms}\label{sec:seq_algs}
We present our novel sequential algorithms before discussing how they may be implemented in parallel. 
There are several different variations of algorithms we will present, and we provide a hierarchy diagram for how they relate in \cref{fig:algs}. 
The first optimizations we show, using sequential truncation and randomization, have been developed in previous work. 
We then progress to using a Kronecker product of random matrices within the randomized algorithms, and then finally reusing Kronecker factors in the HOSVD case. 
The two most efficient algorithms we present are in the leftmost boxes of the two main subtrees: randomized STHOSVD with Kronecker products (\cref{alg:rsthosvdkron}) and randomized HOSVD with reused Kronecker factors (\cref{alg:rhosvdrekron}).
\setlength{\abovecaptionskip}{1pt}
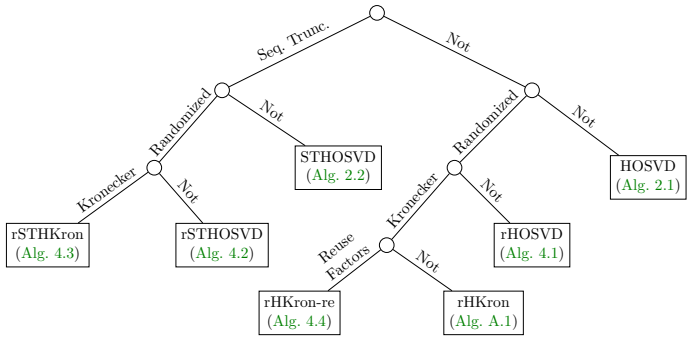
\begin{figure}[!ht]
\centering
\tikzexternaldisable
\tikzsetnextfilename{algs_hierarchy}

\tikzstyle{edgelabel} = [midway, above, sloped]
\resizebox{.7\textwidth}{!}{
\begin{tikzpicture}[scale=.875,namenode/.style={scale=.75}]


\node[draw,circle] (root) at (0,0) {};
\node[draw,circle] (notST) at (4,-2) {};
\node[draw,circle] (ST) at (-4,-2) {};
\node[draw,rectangle,align=center] (dHOSVD) at (7,-4.25) {\HOSVD{}\\(\cref{alg:hosvd})};
\node[draw,circle] (rHOSVD) at (2,-4) {};
\node[draw,rectangle,align=center] (dSTHOSVD) at (-1,-4) {\STHOSVD{}\\(\cref{alg:sthosvd})};
\node[draw,circle] (rSTHOSVD) at (-5.75,-4) {};
\node[draw,rectangle,align=center] (rSTHOSVDdense) at (-4,-6) {\rSTHOSVD{}\\(\cref{alg:rsthosvd})};
\node[draw,rectangle,align=center] (rSTHOSVDkron) at (-8.5,-6) {\rSTHOSVDkron{}\\(\cref{alg:rsthosvdkron})};
\node[draw,rectangle,align=center] (rHOSVDdense) at (4,-6) {\rHOSVD{}\\(\cref{alg:rhosvd})};
\node[draw,circle] (rHOSVDkron) at (.25,-6) {};
\node[draw,rectangle,align=center] (rHOSVDkronno) at (2.75,-7.75) {\rHOSVDkron{}\\(\cref{alg:rhosvdkron})};
\node[draw,rectangle,align=center] (rHOSVDkronreuse) at (-2,-7.75) {\rHOSVDkronreuse{}\\(\cref{alg:rhosvdrekron})};

\draw (root) -- (ST) node[edgelabel] {Seq.~Trunc.};
\draw (root) -- (notST) node[edgelabel] {Not};
\draw (notST) -- (dHOSVD) node[edgelabel] {Not};
\draw (notST) -- (rHOSVD) node[edgelabel] {Randomized};
\draw (ST) -- (dSTHOSVD) node[edgelabel] {Not};
\draw (ST) -- (rSTHOSVD) node[edgelabel] {Randomized};
\draw (rSTHOSVD) -- (rSTHOSVDdense) node[edgelabel] {Not};
\draw (rSTHOSVD) -- (rSTHOSVDkron) node[edgelabel] {Kronecker};
\draw (rHOSVD) -- (rHOSVDdense) node[edgelabel] {Not};
\draw (rHOSVD) -- (rHOSVDkron) node[edgelabel] {Kronecker};
\draw (rHOSVDkron) -- (rHOSVDkronno) node[edgelabel] {Not};
\draw (rHOSVDkron) -- (rHOSVDkronreuse) node[edgelabel, text width=40, text centered] {Reuse Factors};

\end{tikzpicture}}
\caption{Hierarchy of algorithms}
\label{fig:algs}
\end{figure}
\setlength{\abovecaptionskip}{10pt}

In all the algorithms presented in this section, we employ a holistic truncation approach. Given a tensor $\T{X}$, target rank $\V{r} = (r_1,\dots, r_d)$, and oversampling parameter $p$ and letting $\ell_j = r_j+p$ for $j = 1,\dots,d$, we first apply the randomized range finder algorithm (\cref{alg:rrf}) to each mode unfolding, and obtain an initial core $\hat{\T{G}} \in \mb{R}^{\ell_1 \times \dots \times \ell_d}$. We then apply the truncation phase of \cref{alg:randsvd} by computing a deterministic \STHOSVD{} of $\hat{\T{G}}$ such that $\T[\hat]{G} \approx \T{G} \times_1 \M{V}_1 \times \dots \times_d \M{V}_d$. 
The rank-$\V{r}$ representation of $\T{X}$ is then $\T{X} \approx \T{G} \times_1 \M[\hat]{U}_1\M{V}_1 \times \dots \times_d \M[\hat]{U}_d\M{V}_d$.

\subsection{Randomized HOSVD/STHOSVD}\label{ssec:rsimp}
The first algorithms we present are the basic form of randomized algorithms on which we improve throughout the paper. \cref{alg:rhosvd,alg:rsthosvd} are similar to other randomized projection algorithms for Tucker decompositions found in \cite{randtucker_survey,minster2020randomized} except for the truncation approach. Instead of directly applying the randomized SVD algorithm (\cref{alg:randsvd}) to each mode unfolding, we take the holistic approach described above. 
We can use this technique both with HOSVD, shown in \cref{alg:rhosvd}, and STHOSVD, shown in \cref{alg:rsthosvd}.

\vspace{-.4cm}
\hspace*{-\parindent}%
\begin{minipage}[t]{.49\textwidth}
\begin{algorithm}[H]
\caption{Randomized HOSVD}
\label{alg:rhosvd}
\begin{algorithmic}[1]\footnotesize
\Function{$[\T{G}, \{\M{U}_j\}] =$ \rHOSVD}{$\T{X},\V{r},p$}

\phantom{$\hat{\T{G}} = \T{X}$} 
\For{$j=1:d$}
\State Draw $\M{\Omega} \in \mb{R}^{n_j^\oslash \times \ell_j}$
\State $\hat{\M{U}}_j = $ RandRangeFinder$(\Mz{X}{j},\M{\Omega})$
\EndFor
\State $\hat{\T{G}} = \T{X} \times_1 \hat{\M{U}}_1^\top \times \dots \times_d \hat{\M{U}}_d^\top$
\State $[\T{G},\{\M{V}_j\}] = \text{\STHOSVD}(\hat{\T{G}},\V{r})$
\State $\M{U}_j = \hat{\M{U}}_j \M{V}_j$ for $j = 1,\dots,d$
\EndFunction
\end{algorithmic}
\end{algorithm}
\end{minipage}
\begin{minipage}[t]{.49\textwidth}
\begin{algorithm}[H]
\caption{Randomized STHOSVD}
\label{alg:rsthosvd}
\begin{algorithmic}[1]\footnotesize
\Function{$[\T{G}, \{\M{U}_j\}] =$ \rSTHOSVD}{$\T{X},\V{r},p$}
\State $\hat{\T{G}} = \T{X}$
\For{$j=1:d$}
\State Draw $\M{\Omega} \in \mb{R}^{r_j^\cl n_j^\cg \times \ell_j}$
\State $\hat{\M{U}}_j = $ RandRangeFinder$(\Mz[\hat]{G}{j},\M{\Omega})$
\State $\hat{\T{G}} = \hat{\T{G}} \times_j \hat{\M{U}}_j^\top$
\EndFor
\State $[\T{G},\{\M{V}_j\}] = \text{\STHOSVD}(\hat{\T{G}},\V{r})$
\State $\M{U}_j = \hat{\M{U}}_j \M{V}_j$ for $j = 1,\dots,d$
\EndFunction
\end{algorithmic}
\end{algorithm}
\end{minipage}

\subsection{Randomized HOSVD/STHOSVD with Kronecker product}
Our main algorithm combines the HOSVD/STHOSVD algorithms with a special case of the randomized range finder algorithm used on each mode unfolding. 
Within the randomized range finder, we will represent the random matrix $\M{\Omega}$ as a Kronecker product of random matrices each with a small number of columns instead of a single large $\M{\Omega}$. 
This allows us both to employ a Multi-TTM operation instead of matrix multiplication, reducing the computational complexity, and to exploit properties of tall-and-skinny matrices in our parallel algorithms. Specifically, for a tensor $\T{X} \in \mb{R}^{n_1 \times \dots \times n_d}$, rank $\V{r} = (r_1,\dots,r_d)$, and oversampling parameter $p$ with $\ell_j = r_j+p$,
define $\M{\Omega}_j \in \mb{R}^{n_j^\oslash \times \ell_j}$ as 
	$\M{\Omega}_j = \left(\M{\Phi}_{j,d} \otimes  \dots \otimes \M{\Phi}_{j,j+1} \otimes \M{\Phi}_{j,j-1} \otimes \dots \otimes \M{\Phi}_{j,1}\right)^\Tra,$
with $\M{\Phi}_{j,k} \in \mb{R}^{s_{j,k} \times n_k}$ a random matrix from some distribution (e.g. Gaussian, SRHT, etc.), and $\M{S} \in \mb{N}^{d \times d}$ a matrix of subranks. We define $\M{S}$ to have entries $s_{j,k}$ the $k$-th subrank for mode $j$ where $k \neq j$, and diagonal entries $s_{j,j} = 1$ for $j = 1,\dots,d$, such that the row products $\prod_{k =1}^{d} s_{j,k} = \ell_j$, for $j = 1,\dots, d$. 
We summarize the steps for our algorithm in STHOSVD form in \cref{alg:rsthosvdkron} (\rSTHOSVDkron), and include an HOSVD version in \cref{alg:rhosvdkron}.
Note that \cref{line:omega_stkron,line:sketch_stkron,line:qr_stkron} in \cref{alg:rsthosvdkron} consist of applying randomized range finder to the current mode unfolding using the Kronecker product as our random matrix.

\begin{algorithm}[!ht]
\caption{Randomized STHOSVD with Kronecker product}
\label{alg:rsthosvdkron}
\begin{algorithmic}[1]\footnotesize
\Function{$[\T{G}, \{\M{U}_j\}] =$ \rSTHOSVDkron}{$\T{X},\V{r},p$}
\State $\T[\hat]{G}=\T{X}$
\State Compute matrix of subranks $\M{S}$
\For{$j = 1:d$}
\State\label{line:omega_stkron} Draw $d-1$ random matrices $\M{\Phi}_{j,k} \in \mb{R}^{s_{j,k} \times \ell_k}$ for $k< j$ and $\M{\Phi}_{j,k} \in \mb{R}^{s_{j,k} \times n_k}$ for $k >j$
\State\label{line:sketch_stkron} $\T{Y} \leftarrow \T[\hat]{G} \times_1 \M{\Phi}_{j,1} \times \dots \times_{j-1} \M{\Phi}_{j,j-1} \times_{j+1} \M{\Phi}_{j,j+1} \times \dots \times_d \M{\Phi}_{j,d}$
\State\label{line:qr_stkron} Compute thin QR $\Mz{Y}{j} = \M[\hat]{U}_j \M{R}$
\State\label{line:core_stkron} $\T[\hat]{G} = \T[\hat]{G} \times_j \M[\hat]{U}_j^\Tra$
\EndFor
\State\label{line:truncatecore_stkron} $[\T{G}, \{\M{V}_j\}] = $ STHOSVD$(\hat{\T{G}}, \V{r})$
\State $\M{U}_j = \hat{\M{U}}_j \M{V}_j$ for $j = 1,\dots, d$
\EndFunction
\end{algorithmic}
\end{algorithm}

\paragraph{Choosing subranks} The restriction on the subranks $\M{s}$ is that $\prod_{k \neq j}^{d} s_{j,k} = \ell_j = r_j+p$ for each row $j = 1,\dots d$. 
In practice, we can actually choose the subranks so that $\prod_{k \neq j}^{d} s_{j,k} \geq \ell_j$. 
Satisfying this looser condition means we are essentially increasing the oversampling we are already doing through parameter $p$. 
This frees us to use heuristics to choose the subranks. 
We choose each row of $\M{S}$ to be composed of $d-1$ integer factors of $\ell_j$. In the case that $\ell_j$ does not have exactly $d-1$ integer factors, we adjust the oversampling parameter until we can obtain the correct number of factors.

\subsection{Randomized HOSVD with Kronecker Factor Reuse}
\label{sec:reuse}
An additional adaption we make to reduce the number of random values generated and computation is to reuse the components of the Kronecker product $\M{\Omega}$. 
Instead of generating $d-1$ random matrices for each mode as in 
\cref{alg:rsthosvdkron}, we generate $d$ random matrices $\{\M{\Phi}_j\}$ once, and use different combinations of $d-1$ of those same matrices in each mode. 
This approach is summarized in \cref{alg:rhosvdrekron}. 
One benefit of this approach is that we generate significantly fewer random entries. 
We can also, as will be addressed in \cref{sec:parallel}, implement dimension trees to reduce computational cost. 
This variation only works for the HOSVD approach as the size of each $\M{\Phi}_j$ remains the same, while it would change after each mode in an STHOSVD approach.
\setlength\intextsep{0.01cm}
\begin{algorithm}[!ht]
\caption{Randomized HOSVD with Kronecker product re-using factors}
\label{alg:rhosvdrekron}
\begin{algorithmic}[1]\footnotesize
\Function{$[\T{G}, \{\M{U}_j\}] =$ \rHOSVDkron}{$\T{X},\V{r},p$}
\State Compute subranks $\V{s}$
\State Draw $d$ random matrices $\M{\Phi}_k \in \mb{R}^{s_k \times n_k}$ for $k = 1,\dots, d$
\For{$j = 1:d$}
\State $\T{Y} \leftarrow \T{X} \times_1 \M{\Phi}_1 \times \dots \times_{j-1} \M{\Phi}_{j-1} \times_{j+1} \M{\Phi}_{j+1} \times \dots \times_d \M{\Phi}_d$\label{line:sketchMultiTTM}
\State Compute thin QR $\Mz{Y}{j} = \M{U}_j \M{R}$
\EndFor
\State $\hat{\T{G}} = \T{X} \times_1 \hat{\M{U}}_1^\top \times \dots \times_d \hat{\M{U}}_d^\top$
\State $[\T{G}, \{\M{V}_j\}] = $ \STHOSVD$(\hat{\T{G}}, \V{r})$
\State $\M{U}_j = \hat{\M{U}}_j \M{V}_j$ for $j = 1,\dots, d$
\EndFunction
\end{algorithmic}
\end{algorithm}
\setlength\intextsep{0.3cm}
\paragraph{Choosing subranks} 
Note that in this case we compute only one vector of subranks $\V{s} \in \mb{N}^d$, instead of a matrix as in \cref{alg:rsthosvdkron}. 
We compute these subranks heuristically as well, and in this case in a straightforward manner, deriving the formula $ s_i = \lceil ( \prod_{j=1}^d \ell_j )^{\frac{1}{d-1}} / \ell_i \rceil$,
from the conditions $\V{s}\in \mb{N}^d$ and $\prod_{k=1}^{d-1} s_{k} \geq \ell_j$ for $j = 1,\dots,d$. 
This formula, while more straightforward, is more constricting than the method we use to compute subranks for \cref{alg:rsthosvdkron} because $s_j\ell_j$ is approximately fixed for each mode $j$.
This can create problems when computing with tensors with skewed modes and ranks, so we recommend not reusing Kronecker factors in these cases.

\subsection{Dimension Tree Optimization}\label{sec:dimension_tree}
In \cref{line:sketchMultiTTM} of \cref{alg:rhosvdrekron}, we perform a multi-TTM to compute the sketch tensor $\T{Y}$ for each mode, resulting in $d$ multi-TTM products. 
Notice, however, that the $d-1$ random matrices that we use in each multi-TTM are drawn from the same set of $d$ random matrices $\{\M{\Phi}_1,\dots, \M{\Phi}_d\}$. Thus a significant number of computations in each multi-TTM are repeated and can be reused. 
The dimension tree concept, which has been employed to reduce computational complexity in other algorithms \cite{CC+17,KR19}, also applies in this situation. 
In our implementation, we use a binary tree with $d$ leaf nodes for a $d$-way tensor. 
For example, given a 4-way tensor $\T{X}$ and four random matrices $\{\M{\Phi}_1,\M{\Phi}_2,\M{\Phi}_3,\M{\Phi}_4\}$, the 4 multi-TTM operations that would be carried out without a dimension tree are shown in the four leaf nodes of \cref{fig:dimTree}. 
The dimension tree provides an efficient way to perform the computations shared between each pair of adjacent leaf nodes and store the results in memory to reduce computation.  
For a $d$-way tensor $\T{X} \in \mb{R}^{n \times \dots \times n}$ and $d$ random matrices $\{\M{\Phi}_j\}$ each of size $r\times n$, the cost of using dimension trees to sketch every unfolding of $\T{X}$ is shown in \cref{tab:dimTreeCost}. When $r\ll n$, both costs are approximated by the first terms in the summations. 
As $d$ increases, the cost reduction that comes from using a dimension tree increases proportionally to $d/2$. 
\setlength{\belowcaptionskip}{-10pt}
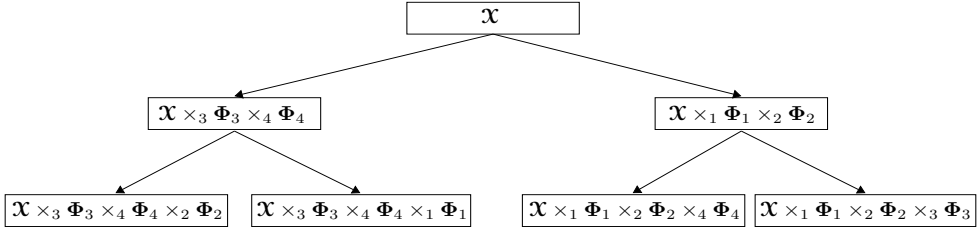
\begin{figure}[h]
    \centering
    \tikzexternaldisable
	\tikzsetnextfilename{dimTree}
    \resizebox{1.\textwidth}{.23\textwidth}{
\begin{tikzpicture}[x=0.75pt,y=0.75pt,yscale=-1,xscale=1,every node/.style={scale=0.95}]

    \draw   (248,24) -- (348,24) -- (348,44) -- (248,44) -- cycle ;
    \draw   (98,84) -- (198,84) -- (198,104) -- (98,104) -- cycle ;
    \draw   (392,84) -- (492,84) -- (492,104) -- (392,104) -- cycle ;
    \draw   (15,145) -- (144,145) -- (144,165) -- (15,165) -- cycle ;
    \draw   (158,145) -- (285,145) -- (285,165) -- (158,165) -- cycle ;
    \draw   (315,145) -- (444,145) -- (444,165) -- (315,165) -- cycle ;
    \draw   (450,145) -- (578,145) -- (578,165) -- (450,165) -- cycle ;
    \draw[-{Triangle}]    (298,44) -- (148,83) ;
    \draw[-{Triangle}]    (298,44) -- (442,83) ;
    \draw[-{Triangle}]    (148,105) -- (221,144) ;
    \draw[-{Triangle}]    (148,105) -- (79,144) ;
    \draw[-{Triangle}]    (442,105) -- (514,144) ;
    \draw[-{Triangle}]    (442,105) -- (379,144) ;

    \draw (290,27) node [anchor=north west][inner sep=0.75pt]   [align=left] {$\T{X}$};
    \draw (103,88) node [anchor=north west][inner sep=0.75pt]   [align=left] {$\T{X}\times_3\M{\Phi}_3\times_4\M{\Phi}_4$};
    \draw (399,88) node [anchor=north west][inner sep=0.75pt]   [align=left] {$\T{X}\times_1\M{\Phi}_1\times_2\M{\Phi}_2$};
    \draw (18,148) node [anchor=north west][inner sep=0.75pt]   [align=left] {$\T{X}\times_3\M{\Phi}_3\times_4\M{\Phi}_4\times_2\M{\Phi}_2$};
    \draw (160,148) node [anchor=north west][inner sep=0.75pt]   [align=left] {$\T{X}\times_3\M{\Phi}_3\times_4\M{\Phi}_4\times_1\M{\Phi}_1$};
    \draw (318,148) node [anchor=north west][inner sep=0.75pt]   [align=left] {$\T{X}\times_1\M{\Phi}_1\times_2\M{\Phi}_2\times_4\M{\Phi}_4$};
    \draw (452,148) node [anchor=north west][inner sep=0.75pt]   [align=left] {$\T{X}\times_1\M{\Phi}_1\times_2\M{\Phi}_2\times_3\M{\Phi}_3$};

    \end{tikzpicture}}
    \caption{Dimension tree for computing the sketches of a 4-way tensor $\T{X}$ in \cref{alg:rhosvdrekron}.} 
    \label{fig:dimTree}
\end{figure}
\begin{table}[!ht]
    \centering
\begin{tabular}{c|c|c}
 & \textbf{with dimTree} & \textbf{without dimTree} \\ \hline
 $\bm{d = 3}$ & $2(2rn^3 + 3r^2n^2)$ & $2(3rn^3 + 3r^2n^2)$ \\ \hline
 $\bm{d=4}$ & $2(2rn^4 + 2r^2n^3 + 4r^3n^2)$ & $2(4rn^4 + 4r^2n^3 + 4r^3n^2)$ \\ \hline
 $\bm{d=5}$ & $2(2rn^5 + 2r^2n^4 + 3r^3n^3 + 5r^4n^2)$ & $2(5rn^5 + 5r^2n^4 + 5r^3n^3 + 5r^4n^2)$
 \end{tabular}
    \caption{Complexity reduction examples that can be achieved using dimension trees.}
    \label{tab:dimTreeCost}
\end{table}

\subsection{Computational Complexity}

To analyze the computational cost of our algorithms, we consider the notationally simpler case with a $d$-mode tensor $\T{X} \in \mb{R}^{n \times \dots \times n}$, target rank $(r,\dots,r)$, and oversampling parameter $p$, letting $\ell=r+p$. We will also let the subranks $\{s_{j,k}\}$ and $\{s_k\}$ all be the same value, which we denote as $s=\ell^{d-1}$. Assume $s < r \ll n$.

There are two dominant costs for each algorithm presented: computing an SVD for each mode, and forming the core tensor via a multi-TTM or a series of TTMs (in the \STHOSVD{} case). We show the leading terms of both dominant steps for the standard algorithms, \cref{alg:hosvd,alg:sthosvd}, and compare to those for \cref{alg:rhosvd,alg:rsthosvd,alg:rsthosvdkron,alg:rhosvdrekron} in \cref{tab:compcost}. More details on the cost analysis for \cref{alg:hosvd,alg:sthosvd} can be found in \cite{vann2012new}, while more details on the analysis for \cref{alg:rhosvd} and \cref{alg:rsthosvd} can be found in \cite{minster2020randomized}. Based on the terms shown in \cref{tab:compcost}, we advocate the use of either \cref{alg:rsthosvdkron} or \cref{alg:rhosvdrekron}, and we show more detailed analysis on these two algorithms in \cref{sec:seq_complexity}.

\begin{table}[!ht]
\centering
\begin{tabular}{l|l|l}
 &  \multicolumn{2}{c}{\textbf{Leading term}} \\
 \hline
 \textbf{Algorithm} & \textbf{SVD} & \textbf{TTM} \\
 \hline 
HOSVD (\ref{alg:hosvd}) & $dn^{d+1}$ & $2rn^d$ \\
STHOSVD (\ref{alg:sthosvd}) & $n^{d+1}$ & $2rn^d$ \\ 
rHOSVD (\ref{alg:rhosvd}) & $2d\ell n^d$ & $2\ell n^d$ \\
rSTHOSVD (\ref{alg:rsthosvd}) & $2\ell n^d$ & $2\ell n^d$ \\
rSTHOSVDkron (\ref{alg:rsthosvdkron}) & $2\ell^{\frac{1}{d-1}} n^d$ & $2\ell n^d$ \\ 
rHOSVDkronreuse (\ref{alg:rhosvdrekron}) & $4\ell^{\frac{1}{d-1}} n^d$ & $2\ell n^d$ \\
\end{tabular}
\caption{Computational Cost}
\label{tab:compcost}
\end{table}

\subsection{Comparison with Previous Work}

We compare our sequential algorithms with previous approaches, in particular those based on the use of randomized SVD \cite{minster2020randomized} and Kronecker-structured random matrices \cite{che2021efficient}.
As summarized in \cite{randtucker_survey}, randomization can be used in multiple ways to compute Tucker approximations of tensors.
The most similar approaches to \cref{alg:rhosvd,alg:rsthosvd} are \cite[Algs. 3.1 and 3.2]{minster2020randomized}.
These algorithms replace the deterministic SVD within \cref{alg:hosvd,alg:sthosvd} with randomized SVD (\cref{alg:randsvd}).
The randomized SVD requires computing the thin SVD of the projection of the approximate column space; it increases the computational cost compared to randomized range finder by a constant factor greater than 2.
Because \cref{alg:rhosvd,alg:rsthosvd} use randomized range finder (\cref{alg:rrf}), they involve only one operation with the input (the random sketch) at the expense of working with the oversampled rank $\V{\ell}$ rather than the target rank $\V{r}$ until the final core truncation step.
The oversampled ranks are only slightly larger than the target ranks, so \cref{alg:rhosvd,alg:rsthosvd} are computationally cheaper than \cite[Algs. 3.1 and 3.2]{minster2020randomized}.

The Kronecker-structured random sketches of \cref{alg:rsthosvdkron,alg:rhosvdrekron} are similar to  \cite[Alg. 3.1]{che2021efficient}.
This algorithm uses a Sampled Random Fourier Transform to sketch each mode's matricization within STHOSVD.
Besides using a complex-valued random matrix, the key difference with \cref{alg:rsthosvdkron} is the truncation strategy: after computing the thin QR decomposition of the sketched matrix with $\ell$ columns, all but the first $r$ columns are truncated.
As we demonstrate in \cref{sec:accuracy}, our truncation strategy using a deterministic STHOSVD of the core tensor computes a more accurate approximation.

To the best of our knowledge, reusing Kronecker factors and exploiting the possible memoization of temporary quantities as presented in \cref{sec:reuse,sec:dimension_tree} have not been considered before.

\section{Error Analysis}\label{sec:erroranalysis}
We now present error guarantees for \cref{alg:rsthosvdkron,alg:rhosvdrekron}. Let $\T{T}=[\T{G}; \M{U}_1,\dots, \M{U}_d]$ be the approximation from either algorithm, and $\T[\hat]{T}=[\T[\hat]{G}; \M[\hat]{U}_1,\dots, \M[\hat]{U}_d]$ be the intermediate rank-$\V{\ell}$ approximation. The overall form of the error is
$$\varepsilon_{\text{total}} = \| \T{X} - \T{T} \|  \leq \| \T{X} - \T[\hat]{T}\| + \| \T[\hat]{T} - \T{T} \| = \varepsilon_{\text{rand}} + \varepsilon_{\text{core}},$$
where $\varepsilon_{\text{rand}}$ represents the error from forming the rank-$\V{\ell}$ approximation, and $\varepsilon_{\text{core}}$ represents the error in truncating the approximation to rank $\V{r}$.
The component $\varepsilon_{\text{core}}$ is equivalent to the error in computing the STHOSVD of $\T[\hat]{G}$, which we can see from
\begin{equation*}
	\varepsilon_{\text{core}} = \| \left( \T[\hat]{G} - \T{G} \times_1 \M{V}_1 \times \dots \times_d \M{V}_d \right) \times_1 \M[\hat]{U}_1 \times \dots \times_d \M[\hat]{U}_d \| = \| \T[\hat]{G}- \T{G} \times_1 \M{V}_1 \times \dots \times_d \M{V}_d \|,
\end{equation*}
where the second equality follows from the orthonormality of $\{\M[\hat]{U}_j\}$.
We can then apply the error bound for STHOSVD \cite[Theorem 6.5]{vann2012new} to initial core $\T[\hat]{G}$, i.e,
$\varepsilon_{\text{core}}^2 \leq \sum_{j=1}^d \sum_{i=r_j+1}^{\ell_j} \sigma_i^2\left(\Mz{\hat{G}}{j} \right)$,
where $\sigma_i(\M{A})$ denotes the $i$th singular value of $\M{A}$.

 Then, as $\T[\hat]{G}$ is a random quantity, we need to relate the singular values of $\Mz{\hat{G}}{j}$ to the singular values of $\Mz{X}{j}$ to get a deterministic upper bound. 
Because matrices $\{\M[\hat]{U}_j\}$ are orthonormal, the mode-wise singular values of $\T[\hat]{G}$ cannot be larger than those of $\T{X}$.
 A formal proof of this fact can be found in \cref{sec:appendix_core}.
 Thus, we have
\begin{equation}\label{eq:core_svals}
\varepsilon_{\text{core}}^2 \leq \sum_{j=1}^d \sum_{i=r_j+1}^{\ell_j} \sigma_i^2\left(\Mz{X}{j} \right).
\end{equation}

The rest of this section considers the component $\varepsilon_{\text{rand}}$.
Starting with an error bound for randomized range finder (\cref{alg:rrf}) using a Kronecker product of SRHT matrices, we extend the results to an error bound for our HOSVD-type algorithm (\cref{alg:rhosvdrekron}) and discuss how to adapt the proof for our STHOSVD-type algorithm (\cref{alg:rsthosvdkron}).

\subsection{Matrix Bound}
\paragraph{Random Matrix}
Let $n = \prod_{j=1}^q n_j$ and $\ell = \prod_{j=1}^q s_j$. We will consider a Kronecker product of SRHT matrices of the form 
\begin{equation}\label{eq:srht_kron}
 \M{\Omega} = \M{D}(\M{H}_1\otimes \M{H}_2 \otimes \dots \otimes \M{H}_q) \in \mb{R}^{n \times \ell},
 \end{equation}
where $\M{D} \in \mb{R}^{n \times n}$ is a diagonal matrix with i.i.d. entries from the Rademacher distribution, i.e. either $1$ or $-1$ with equal probability, and for each $j= 1,\dots, q$, $\M{H}_j \in \mb{R}^{n_j \times s_j}$ is formed from $s_j$ randomly sampled columns of an $n_j \times n_j$ Walsh-Hadamard matrix scaled by $\frac{1}{\sqrt{n_j}}$. Due to this scaling, $\M{\Omega}$ is orthonormal. Also note that the $\{\M{H}_j\}$ matrices are generated independently.

\paragraph{Notation}
We now introduce the setup and notation for our main theorem. Following the notation of \cite{halko2011finding}, let $\M{X} = \M{U \Sigma V}^\top$ be the SVD of matrix $\M{X} \in \mb{R}^{m \times n}$ with $m\leq n$. 
Fix target rank $r$, and partition the SVD as 
\begin{equation}\label{eq:partition}
\M{X} = \M{U}\bmat{ \M{\Sigma}_1 & \\ & \M{\Sigma}_2} \bmat{\M{V}_1^\top \\ \M{V}_2^\top},
\end{equation}with $\M{\Sigma}_1 \in \mb{R}^{r \times r}, \M{\Sigma}_2 \in \mb{R}^{(m-r) \times (m-r)}, \M{V}_1 \in \mb{R}^{n \times r}$, and $\M{V}_2 \in \mb{R}^{n \times (m-r)}$ such that $\M{V}_1$ and $\M{V}_2$ have orthonormal columns.
Now let $\M{\Omega} \in \mb{R}^{n \times \ell}$ be the random matrix defined in \cref{eq:srht_kron}, and define 
\vspace{-.2cm}
\begin{equation} \label{eq:omegas}
\M{\Omega}_1 = \M{V}_1^\top \M{\Omega}, \quad \M{\Omega}_2 = \M{V}_2^\top \M{\Omega}.
\vspace{-.1cm}
\end{equation}

We are interested in bounding the largest singular values of $\M{\Omega}_1^\dagger$ and $\M{\Omega}_2$. We will use the orthonormality of $\M{\Omega}_2$ to bound its largest singular values, but the argument is more complicated for $\M{\Omega}_1$.
Here we will adapt the approach in \cite{matrixproof}, allowing for the application to a Kronecker product of independent SRHT matrices and making other minor improvements. 
This bound is stated in \cref{lem:omega1}, which we will then use to prove our approximation error bound. 
We prove \cref{lem:omega1} in \cref{sec:appendix_omega1}.

\begin{lemma}\label{lem:omega1}
	Let $\M{\Omega}_1 \in \mb{R}^{r \times \ell}$ as defined in \cref{eq:omegas}, with $\M{\Omega} \in \mb{R}^{n \times \ell}$ the Kronecker product of $q$ SRHT matrices as defined in \cref{eq:srht_kron}, where $n = \prod_{j=1}^q n_j$ and $\ell = \prod_{j=1}^q s_j$. 
	Let $\alpha, \beta > 1$ be real numbers that satisfy
	\vspace{-.15cm}
\begin{equation}\label{eq:alphabeta}
\min_k\{s_k\} \geq \frac{\alpha^2\beta}{(\alpha-1)^2}(r^2+r).
\end{equation}
Then $\frac{1}{\sigma_{\min}^2(\M{\Omega}_1)} \leq \frac{\alpha n}{\ell}$,
with probability at least $1-\frac{1}{\beta^2}$.
\end{lemma}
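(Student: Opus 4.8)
The plan is to control the $r\times r$ Gram matrix $\M{\Omega}_1\M{\Omega}_1^\top$ and show it concentrates around a fixed multiple of the identity. I would first record the first-moment identity. Each factor $\M{\Phi}_j = \M{D}_j\M{H}_j$ has orthonormal columns, and since $\M{D}_j^2 = \M{I}_{n_j}$ while the $s_j$ uniformly sampled columns of the $\tfrac1{\sqrt{n_j}}$-scaled Walsh--Hadamard matrix have Gram matrix averaging to $\tfrac{s_j}{n_j}\M{I}_{n_j}$, we have $\mathbb{E}[\M{\Phi}_j\M{\Phi}_j^\top] = \tfrac{s_j}{n_j}\M{I}_{n_j}$. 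Independence of the $q$ factors and the mixed-product rule for Kronecker products then give $\M{\Omega}^\top\M{\Omega} = \M{I}_\ell$ and $\mathbb{E}[\M{\Omega}\M{\Omega}^\top] = \bigotimes_{j=1}^q \tfrac{s_j}{n_j}\M{I}_{n_j} = \tfrac{\ell}{n}\M{I}_n$, whence $\mathbb{E}[\M{\Omega}_1\M{\Omega}_1^\top] = \M{V}_1^\top\mathbb{E}[\M{\Omega}\M{\Omega}^\top]\M{V}_1 = \tfrac{\ell}{n}\M{I}_r$.

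Next I would reduce the claim to a concentration statement. Since $\sigma_{\min}^2(\M{\Omega}_1) = \lambda_{\min}(\M{\Omega}_1\M{\Omega}_1^\top)$, Weyl's inequality gives $\sigma_{\min}^2(\M{\Omega}_1) \ge \tfrac{\ell}{n} - \|\M{\Omega}_1\M{\Omega}_1^\top - \tfrac{\ell}{n}\M{I}_r\|_2$; equivalently, setting $\M{E} := \tfrac{n}{\ell}\M{\Omega}_1\M{\Omega}_1^\top - \M{I}_r$ (which has mean $\M{0}$), it suffices to prove $\|\M{E}\|_2 \le 1 - \tfrac1\alpha$ except with probability at most $1/\beta^2$, since that yields $\lambda_{\min}(\tfrac n\ell\M{\Omega}_1\M{\Omega}_1^\top)\ge\tfrac1\alpha$, i.e.\ $1/\sigma_{\min}^2(\M{\Omega}_1)\le\alpha n/\ell$. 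I would bound $\|\M{E}\|_2 \le \|\M{E}\|_F$, estimate the fourth moment $\mathbb{E}\|\M{E}\|_F^4$, and pass to a tail bound by Markov's inequality applied to $\|\M{E}\|_F^4$; matching powers, one finds that a bound of the form $\mathbb{E}\|\M{E}\|_F^4 \le \big(C(r^2+r)/\ell\big)^2$ is exactly what hypothesis \eqref{eq:alphabeta} needs in order to drive the failure probability below $1/\beta^2$ (this also explains why the $(r^2+r)$ and the $(\alpha-1)^{-2}$ appear there).

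The technical heart is therefore the moment estimate for $\M{E}$. Using $\mathbb{E}\operatorname{tr}(\M{\Omega}_1\M{\Omega}_1^\top) = \tfrac{\ell r}{n}$ and $\operatorname{tr}\!\big((\M{\Omega}_1\M{\Omega}_1^\top)^2\big) = \operatorname{tr}\!\big((\M{\Omega}^\top\M{P}\M{\Omega})^2\big)$ with $\M{P} = \M{V}_1\M{V}_1^\top$ the rank-$r$ orthoprojector, the quantities $\mathbb{E}\|\M{E}\|_F^2$ and $\mathbb{E}\|\M{E}\|_F^4$ reduce to expectations of traces of low powers of $\M{\Omega}^\top\M{P}\M{\Omega}$. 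I would expand such a trace entrywise, substitute the Kronecker factorization of each entry of $\M{\Omega}$ as a product over the $q$ modes, and take the expectation mode by mode using independence: within a mode the Rademacher average annihilates every term whose sign indices are not matched in pairs (the ``flattening'' effect), and the uniform-sampling average is handled by $\M{H}_j^\top\M{H}_j = \M{I}_{s_j}$ together with the row-norm control that flattening provides. Careful accounting of the surviving index patterns should yield $\mathbb{E}\|\M{E}\|_F^2 \le C(r^2+r)/\ell$ (the $r^2$ coming from off-diagonal patterns, the $+r$ from diagonal ones) and the analogous fourth-moment bound, after which Markov and \eqref{eq:alphabeta} finish the proof; the side condition $\min\{m,n\}>\ell$ is only needed so that $\M{\Omega}$ has full column rank and the statement is non-vacuous.

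I expect this combinatorial trace computation to be the main obstacle. The SRHT entries within a single mode are not independent---one sampled Hadamard column couples all $n_j$ Rademacher signs---and the Kronecker product multiplies these dependencies across the $q$ factors, so extracting the clean $(r^2+r)/\ell$ scaling (rather than a looser $r^2$, or a bound carrying spurious logarithmic or dimension factors) is exactly where the adaptation of the cited matrix argument must do its work. By comparison the first-moment identity, the Weyl and Frobenius reductions, and the concluding Markov step are routine.
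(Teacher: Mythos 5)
Your skeleton coincides with the paper's proof: split the Gram matrix $\M{\Omega}_1\M{\Omega}_1^\top$ into $\tfrac{\ell}{n}\M{I}_r$ plus a mean-zero fluctuation (in the paper this split is even deterministic entrywise, since the $a=b$ terms give exactly $\tfrac{\ell}{n}\delta_{ij}$), control the fluctuation in Frobenius norm by an entrywise moment computation that exploits independence across the $q$ modes and the Rademacher/Hadamard identities, pass to a tail bound via Markov, and finish by a perturbation argument ($\lambda_{\min}$/Weyl in your plan, a Neumann series for $\M{G}^{-1}$ in the paper; with $\M{M}^{-1}=\tfrac{n}{\ell}\M{I}$ your $\M{E}$ is exactly the paper's $\M{N}\M{M}^{-1}$, so these closings are equivalent). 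Your first-moment identity, the reduction to $\|\M{E}\|_2\le 1-1/\alpha$, and your description of how unmatched Rademacher indices are annihilated mode by mode (producing the $r^2+r$ count, i.e.\ $\mb{E}[N_{ij}^2]\le(1+\delta_{ij})\ell/n^2$) all match the paper's Steps 1--2.

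The genuine gap is the step you yourself flag as the heart of the argument: the fourth-moment estimate $\mb{E}\|\M{E}\|_F^4\le\bigl(C(r^2+r)/\ell\bigr)^2$. You give no indication of how to carry it out -- it involves eighth-order products of SRHT entries (fourth order in $\M{D}$ and in $\M{F}=\M{H}\M{H}^\top$), a much heavier pairing analysis than the second-moment calculation, and it is not ``analogous'' in a routine way. More seriously, your Markov-on-$\|\M{E}\|_F^4$ step only closes under \cref{eq:alphabeta} as stated if $C\le 1$, i.e.\ if $\mb{E}\|\M{E}\|_F^4\le\bigl((r^2+r)/\ell\bigr)^2$; but the second-moment bound $(r^2+r)/\ell$ is essentially attained (e.g.\ for incoherent $\M{V}_1$), so Jensen's inequality $\mb{E}\|\M{E}\|_F^4\ge(\mb{E}\|\M{E}\|_F^2)^2$ leaves no room for the additional positive pairing terms that any genuine fourth-moment expansion produces. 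So your route can at best prove the lemma with an extra constant factor inserted into \cref{eq:alphabeta}, not with the stated hypothesis. The paper never needs fourth moments: it stops at $\mb{E}\|\M{N}\|_F^2\le(r^2+r)\ell/n^2$ and applies Markov once at threshold $\beta$ times this bound, which is what \cref{eq:alphabeta} is calibrated to. Your ``matching powers'' concern does touch a real subtlety -- a single second-moment Markov step at that threshold gives failure probability $1/\beta$ rather than $1/\beta^2$, so the stated probability in effect reads $\beta$ as $\beta^2$ there -- but repairing it through fourth moments, as you propose, cannot be done with the constant that \cref{eq:alphabeta} provides, and in any case the estimate you would need is left entirely unproved.
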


\begin{remark}
	The bound in \cref{lem:omega1} contains a factor of $n$, the number of rows of Kronecker product $\M{\Omega}$. 
	This factor is not present in singular value bounds for Gaussian random matrices, and is a consequence of using SRHT matrices, as seen in bounds involving a single SRHT matrix in \cite{tropp2011improved}. 
	We choose to use SRHT instead of Gaussian matrices in our context because the Kronecker product of Gaussian matrices is no longer Gaussian, while the Kronecker product of SRHT matrices retains some SRHT properties. 
	In the empirical results shown in \cref{sec:accuracy}, we see that using SRHT matrices produces similar accuracy to Gaussian matrices; we anticipate that future work in random matrix theory will be able to improve this factor.
\end{remark}

\begin{theorem}\label{thm:matrixbound}
Let $\hat{\M{X}} = \M{QQ}^\top \M{X}$ be the approximation given by the randomized range finder of matrix $\M{X} \in \mb{R}^{m \times n}$ with target rank $r$, oversampling parameter $p$ such that $\ell=r+p \leq \min\{m,n\}$, and random sampling matrix $\M{\Omega}$ as defined in \cref{eq:srht_kron}. 
Let $\alpha,\beta > 1$ satisfy \cref{eq:alphabeta}.
Then, with probability at least $1-\frac{1}{\beta^2}$,
\vspace{-.1cm}
\begin{equation*}
	\| \M{X} - \M{QQ}^\top \M{X} \|_F \leq \left( \left(1+ \frac{\alpha n}{\ell} \right)\sum_{i=r+1}^{\min\{m,n\}} \sigma_i^2(\M{X}) \right)^{1/2}.
	\vspace{-.1cm}
\end{equation*}
\end{theorem}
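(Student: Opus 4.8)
The plan is to follow the skeleton of the randomized range finder analysis in \cite{halko2011finding}, but to replace their probabilistic estimates on the Gaussian sketch by two facts particular to our setting: the Kronecker product of SRHT matrices is \emph{exactly} orthonormal, and the bottom singular value of the compressed right factor is controlled by \cref{lem:omega1}. First I would invoke the deterministic structural bound of \cite[Thm.~9.1]{halko2011finding}. With the partitioned SVD $\M{X} = \M{U}\left[\begin{smallmatrix}\M{\Sigma}_1 & \\ & \M{\Sigma}_2\end{smallmatrix}\right]\left[\begin{smallmatrix}\M{V}_1^\top \\ \M{V}_2^\top\end{smallmatrix}\right]$ and $\M{\Omega}_1 = \M{V}_1^\top\M{\Omega}$, $\M{\Omega}_2 = \M{V}_2^\top\M{\Omega}$ as in \cref{eq:omegas}, whenever $\M{\Omega}_1$ has full row rank we have
\begin{equation*}
\|\M{X} - \M{QQ}^\top\M{X}\|_F^2 \leq \|\M{\Sigma}_2\|_F^2 + \|\M{\Sigma}_2\M{\Omega}_2\M{\Omega}_1^\dagger\|_F^2,
\end{equation*}
since $\operatorname{range}(\M{Q}) = \operatorname{range}(\M{X\Omega})$ and the thin QR step does not change the associated orthogonal projector. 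I would then bound the second term by submultiplicativity, $\|\M{\Sigma}_2\M{\Omega}_2\M{\Omega}_1^\dagger\|_F \leq \|\M{\Sigma}_2\|_F\,\|\M{\Omega}_2\|_2\,\|\M{\Omega}_1^\dagger\|_2$, and estimate the two spectral-norm factors separately.

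For $\|\M{\Omega}_2\|_2$ I would use the orthonormality that is built into the construction: each $\M{\Phi}_j = \M{D}_j\M{H}_j$ has orthonormal columns, because the $s_j$ sampled, $\tfrac{1}{\sqrt{n_j}}$-scaled Walsh--Hadamard columns forming $\M{H}_j$ are orthonormal and left multiplication by the diagonal sign matrix $\M{D}_j$ preserves this. Hence the Kronecker product $\M{\Omega}$ of \cref{eq:srht_kron} satisfies $\M{\Omega}^\top\M{\Omega} = \M{I}_\ell$ (the Kronecker product of matrices with orthonormal columns has orthonormal columns), and since $\M{V}_2$ also has orthonormal columns, $\|\M{\Omega}_2\|_2 = \|\M{V}_2^\top\M{\Omega}\|_2 \leq 1$ deterministically. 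For $\|\M{\Omega}_1^\dagger\|_2 = 1/\sigma_{\min}(\M{\Omega}_1)$ I would apply \cref{lem:omega1} directly (with $q$ factors, $n = \prod n_j$, $\ell = \prod s_j = r+p$): under hypothesis \cref{eq:alphabeta}, which also certifies that $\M{\Omega}_1 \in \mb{R}^{r\times\ell}$ has full row rank so that the structural bound above is valid, we get $\|\M{\Omega}_1^\dagger\|_2^2 \leq \alpha n/\ell$ with probability at least $1 - 1/\beta^2$.

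Combining these estimates, on the same event of probability at least $1-1/\beta^2$,
\begin{equation*}
\|\M{X} - \M{QQ}^\top\M{X}\|_F^2 \leq \|\M{\Sigma}_2\|_F^2 + \frac{\alpha n}{\ell}\,\|\M{\Sigma}_2\|_F^2 = \left(1 + \frac{\alpha n}{\ell}\right)\sum_{i=r+1}^{\min\{m,n\}}\sigma_i^2(\M{X}),
\end{equation*}
and taking square roots gives the stated bound. I do not anticipate a genuine obstacle in this proof: all of the analytic weight has already been absorbed into \cref{lem:omega1}, and the remainder is the standard \cite{halko2011finding} argument, simplified by the fact that our Kronecker-SRHT sketch is orthonormal (which removes the need for a probabilistic bound on $\|\M{\Omega}_2\|_2$ that the Gaussian case requires). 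The only points needing care are checking that the full-row-rank hypothesis of \cite[Thm.~9.1]{halko2011finding} is indeed supplied by \cref{lem:omega1}, and confirming that the oversampling condition \cref{eq:alphabeta} is compatible with the standing assumption $\ell = r+p \leq \min\{m,n\}$.
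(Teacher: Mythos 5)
Your proposal is correct and follows essentially the same route as the paper's proof: both invoke the deterministic structural bound of \cite[Theorem~9.1]{halko2011finding}, bound $\|\M{\Omega}_2\|_2 \leq 1$ deterministically via the orthonormality of the Kronecker-SRHT sketch and of $\M{V}_2$, and apply \cref{lem:omega1} to control $\|\M{\Omega}_1^\dagger\|_2^2 \leq \alpha n/\ell$ on an event of probability at least $1-1/\beta^2$. The only difference is cosmetic: you make the submultiplicativity step and the full-row-rank hypothesis explicit, which the paper leaves implicit.
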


\begin{proof}
Immediately from \cite[Theorem 9.1]{halko2011finding} and recalling the partitioning from \cref{eq:partition} and \cref{eq:omegas}, we have
\begin{equation}\label{eq:hmt}
\begin{aligned}
	\| \M{X} - \M{QQ}^\top \M{X} \|_F^2 = \| (\M{I} - \M{QQ}^\top ) \M{X} \|_F^2 &\leq \| \M{\Sigma}_2 \|_F^2 + \| \M{\Sigma}_2 \M{\Omega}_2 \M{\Omega}_1^\dagger \|_F^2 \\ &\leq \left( 1 + \|\M{\Omega}_2 \|_2^2 \|\M{\Omega}_1^\dagger\|_2^2 \right) \| \M{\Sigma}_2\|_F^2,
\end{aligned}
\end{equation}
We can apply the singular value bounds from \cref{lem:omega1} to obtain the bounds for $\| \M{\Omega}_1^\dagger\|_2^2$. With probability at least $1-\frac{1}{\beta^2}$, $\| \M{\Omega}_1^\dagger \|_2^2 = \frac{1}{\sigma^2_{\text{min}}(\M{\Omega}_1)} \leq \frac{\alpha n}{\ell}$.
For $\| \M{\Omega}_2\|_2^2$, we use properties of both $\M{V}_2$ and $\M{\Omega}$: $\| \M{\Omega}_2 \|_2 = \| \M{V}_2^\top \M{\Omega} \|_2 \leq \| \M{\Omega} \|_2 = 1$, as $\M{V}_2^\top $ has orthonormal rows, and $\M{\Omega}$ has orthonormal columns as the Kronecker product of matrices with orthonormal columns.
Combining these bounds, we obtain $\|\M{\Omega}_2 \|_2^2 \|\M{\Omega}_1^\dagger\|_2^2 \leq \frac{\alpha n}{\ell}$ with probability at least $1-\frac{1}{\beta^2}$.
From \cref{eq:hmt}, we now have
$\| \M{X} - \M{QQ}^\top \M{X} \|_F^2 \leq \left( 1+ \frac{\alpha n}{\ell} \right) \| \M{\Sigma}_2\|_F^2 
= \left(1+\frac{\alpha n}{\ell}\right) \sum_{i=r+1}^{\min\{m,n\}} \sigma_i^2(\M{X})$,
and the result follows.
\end{proof}

\subsection{Tensor Bound}
To generalize the result from \cref{thm:matrixbound} to higher dimensions, we first need a result that expresses the error in a Tucker decomposition in terms of the error in each mode.
\begin{lemma}[{\cite[Theorem 5.1]{vann2012new}}]\label{lem:proj}
	Let $\T{X} \in \mb{R}^{n_1 \times \dots \times n_d}$ and let $\M{P}_j \in \mb{R}^{n_j \times n_j}$ for $j = 1, \dots, d$ be a sequence of orthogonal projectors. Then
	\vspace{-.4cm}
	
	$$\| \T{X} - \T{X} \bigtimes_{j=1}^d \M{P}_j \|^2 = \sum_{j=1}^d \| \T{X} \bigtimes_{i=1}^{j-1} \M{P}_i \times_j \left( \M{I} - \M{P}_j \right) \|^2 \leq \sum_{j=1}^d \| \T{X} - \T{X} \times_j \M{P}_j \|^2.$$
\end{lemma}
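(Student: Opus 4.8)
The plan is to realize the error tensor as a telescoping sum of $d$ mutually orthogonal pieces and then invoke the Pythagorean identity. First I would introduce the partial projections $\T{Y}^{(0)} = \T{X}$ and $\T{Y}^{(k)} = \T{X} \times_1 \M{P}_1 \times \cdots \times_k \M{P}_k$ for $k = 1,\dots,d$, so that $\T{Y}^{(d)} = \T{X} \times_1 \M{P}_1 \times \cdots \times_d \M{P}_d$. Because tensor--matrix products in distinct modes commute, $\T{Y}^{(k)} = \T{Y}^{(k-1)} \times_k \M{P}_k$, and telescoping gives
\begin{equation*}
\T{X} - \T{X} \times_1 \M{P}_1 \times \cdots \times_d \M{P}_d \;=\; \sum_{j=1}^d \bigl( \T{Y}^{(j-1)} - \T{Y}^{(j)} \bigr) \;=\; \sum_{j=1}^d \T{E}_j, \qquad \T{E}_j := \T{X} \times_1 \M{P}_1 \times \cdots \times_{j-1} \M{P}_{j-1} \times_j (\M{I} - \M{P}_j),
\end{equation*}
with the empty-product convention for $j=1$ (so $\T{E}_1 = \T{X} \times_1 (\M{I} - \M{P}_1)$). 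This already exhibits the summand appearing in the middle expression of the claim.

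Next I would show that the $\T{E}_j$ are pairwise orthogonal in the tensor inner product, so that $\| \sum_j \T{E}_j \|^2 = \sum_j \| \T{E}_j \|^2$ and the asserted equality follows. The tools are the adjoint relation $\langle \T{A} \times_j \M{M}, \T{B} \rangle = \langle \T{A}, \T{B} \times_j \M{M}^\top \rangle$ (immediate from $\langle \M{M} \M{A}, \M{B} \rangle_F = \langle \M{A}, \M{M}^\top \M{B} \rangle_F$ applied to mode-$j$ unfoldings), the composition rule $(\T{C} \times_j \M{M}) \times_j \M{N} = \T{C} \times_j (\M{N} \M{M})$, and commutativity of products in distinct modes. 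For $j \neq k$, say $j < k$, the operator applied in mode $j$ is $\M{I} - \M{P}_j$ inside $\T{E}_j$ and $\M{P}_j$ inside $\T{E}_k$ (since $j \le k-1$). Pulling the two mode-$j$ operators together via the adjoint and composition rules, and using that an orthogonal projector is symmetric with $(\M{I} - \M{P}_j)\M{P}_j = \M{0}$, forces $\langle \T{E}_j, \T{E}_k \rangle = 0$.

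Finally I would bound each term by $\| \T{E}_j \| \le \| \T{X} \times_j (\M{I} - \M{P}_j) \| = \| \T{X} - \T{X} \times_j \M{P}_j \|$. Commuting the mode-$j$ product to the front, $\T{E}_j = \bigl( \T{X} \times_j (\M{I} - \M{P}_j) \bigr) \times_1 \M{P}_1 \times \cdots \times_{j-1} \M{P}_{j-1}$; each $\M{P}_i$ is an orthogonal projector, so $\| \M{P}_i \|_2 \le 1$, and for any tensor $\T{Z}$, unfolding along mode $i$ gives $\| \T{Z} \times_i \M{P}_i \| = \| \M{P}_i \Mz{Z}{i} \|_F \le \| \Mz{Z}{i} \|_F = \| \T{Z} \|$. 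Applying this for $i = j-1, \dots, 1$ yields $\| \T{E}_j \| \le \| \T{X} \times_j (\M{I} - \M{P}_j) \|$, and squaring and summing over $j$ produces the right-hand bound. I expect the only genuine work to be the orthogonality step---carefully tracking which mode each projector acts in and justifying the product manipulations across distinct modes; the remaining ingredients, namely the telescoping identity, the projector relation $(\M{I} - \M{P})\M{P} = \M{0}$, and the norm-nonincreasing property of an orthogonal projector acting in a single mode, are routine.
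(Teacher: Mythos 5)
Your proposal is correct, and it is essentially the standard argument: the paper does not prove this lemma itself but cites it from \cite[Theorem 5.1]{vann2012new}, whose proof proceeds exactly as you describe, telescoping the error into the terms $\T{X} \bigtimes_{i=1}^{j-1}\M{P}_i \times_j (\M{I}-\M{P}_j)$, establishing their mutual orthogonality via symmetry of the projectors and $(\M{I}-\M{P}_j)\M{P}_j = \M{0}$, and then bounding each term using the fact that applying an orthogonal projector in a single mode does not increase the norm. No gaps; your orthogonality bookkeeping across modes and the final per-term bound are exactly the right details to check.
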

Recall the notation $n_j^\oslash = \prod_{k \neq j}^d n_k$. 
We present our main error bound result in \cref{thm:tensorbound}, which we frame as the error bound for \cref{alg:rhosvdrekron}. This result can be adapted to also apply to \cref{alg:rsthosvdkron} by following similar techniques to \cite[Theorem 3.2]{minster2020randomized}.
We include the details for completeness in \cref{app:sthosvd_proof}.

\begin{theorem}\label{thm:tensorbound}
	Let $\T{T} = [\T{G}; \M{U}_1,\dots, \M{U}_d]$ be the approximation given by \cref{alg:rhosvdrekron} to $\T{X} \in \mb{R}^{n_1 \times \dots \times n_d}$ with target rank $\V{r} = (r_1,\dots, r_d)$ and oversampling parameter $p$. Let $\ell_j = r_j +p$ for $j = 1,\dots, d$. Then, for sequences $\{ \alpha_j\}_{j=1}^d$ and $\{\beta_j\}_{j=1}^d$ satisfying \cref{eq:alphabeta}, the following bound holds with probability at least $1 - \sum_{j = 1}^d \frac{1}{\beta_j^2}$,
\begin{equation*}
 \| \T{X} - \T{T} \| \leq \left( \sum_{j=1}^d \left(1+ \frac{\alpha_j n_j^\oslash}{\ell_j} \right) \sum_{i=r_j+1}^{n_j} \sigma_i^2 (\M{X}_{(j)}) \right)^{1/2} + \left(\sum_{j=1}^d \sum_{i=r_j+1}^{\ell_j} \sigma_i^2\left(\Mz{X}{j} \right)\right)^{1/2}.
\end{equation*}
\end{theorem}
\begin{proof}
Using \cref{eq:core_svals} to bound $\varepsilon_{\text{core}}$, we need only to bound $\varepsilon_{\text{rand}}$.
From \cref{lem:proj}, we have for $\T[\hat]{T} = [\T[\hat]{G}; \M[\hat]{U}_1,\dots, \M[\hat]{U}_d]$ computed as the intermediate rank-$\V{\ell}$ approximation,
\begin{equation*}
	\| \T{X} - \T[\hat]{T} \|^2 = \| \T{X} - \T{X} \bigtimes_{j=1}^d \M[\hat]{U}_j \M[\hat]{U}_j^\top \|^2 \leq \sum_{j=1}^d \| \T{X} - \T{X} \times_j \M[\hat]{U}_j\M[\hat]{U}_j^\top \|^2 = \sum_{j=1}^d \| \M{X}_{(j)} - \M[\hat]{U}_j\M[\hat]{U}_j^\top \M{X}_{(j)} \|_F^2,
\end{equation*}
where the last equality comes from unfolding the tensor along the $j$-th mode for $j = 1,\dots d$.
We now apply the matrix bound from \cref{thm:matrixbound} on each term in this sum, which gives
	$\| \M{X}_{(j)} - \M[\hat]{U}_j\M[\hat]{U}_j^\top \M{X}_{(j)} \|_F^2 \leq \left(1+ \frac{\alpha_j n_j^\oslash}{\ell_j}   \right) \sum_{i = r_j+1}^{n_j} \sigma_i^2(\M{X}_{(j)})$, 
except with failure probability at most $\frac{1}{\beta_j^2}$. 
Then the failure probability for the entire sum is the union of all $d$ failure probabilities for each mode, which is bounded above by the sum of those probabilities by the union bound. Thus, 
\begin{equation}\label{eq:eps_rand}
	\| \T{X} - \T[\hat]{T} \|^2 \leq \sum_{j=1}^d \| \M{X}_{(j)} - \M[\hat]{U}_j\M[\hat]{U}_j^\top \M{X}_{(j)} \|_F^2 \leq \sum_{j=1}^d \left(1+ \frac{\alpha_j n_j^\oslash}{\ell_j} \right) \sum_{i=r_j+1}^{n_j} \sigma_i^2 (\M{X}_{(j)}), 
\end{equation}
except with probability at most $\sum_{j = 1}^d \frac{1}{\beta_j^2}$.
Then, taking square roots gives $\varepsilon_{\text{rand}}$.

Combining \cref{eq:eps_rand} and \cref{eq:core_svals}, the total error in approximation is 
\begin{equation*}
\varepsilon_{\text{total}} \leq \left( \sum_{j=1}^d \left(1+ \frac{\alpha_j n_j^\oslash}{\ell_j} \right) \sum_{i=r_j+1}^{n_j} \sigma_i^2 (\M{X}_{(j)}) \right)^{1/2} + \left(\sum_{j=1}^d \sum_{i=r_j+1}^{\ell_j} \sigma_i^2\left(\Mz{X}{j} \right)\right)^{1/2},
\vspace{-.1cm}
\end{equation*}
with probability at least $1 - \sum_{j = 1}^d \frac{1}{\beta_j^2}$.
 \end{proof}
 
We consider this result pessimistic due to the factor of $n_j^\oslash$ that comes directly from our analysis of SRHT matrices, but does not appear in our accuracy experiments in \cref{sec:accuracy}. We anticipate that this factor could be improved in future analysis. Also note that the result of \cref{thm:tensorbound} differs from \cite[Theorem 4.2]{che2021efficient} in two main ways: our probability of failure is smaller ($\frac{1}{\beta^2}$ per mode compared to $\frac{1}{\beta}$), and our constant in $\varepsilon_{\text{rand}}$ is smaller as we divide by $\ell_j$ for each $j$. Additionally, we use a different diagonal matrix that is not a Kronecker product.

\section{Parallel Algorithms}\label{sec:parallel}
We design and develop parallel implementations for all the randomized algorithms listed in \cref{tab:compcost}. 
Our implementations are based on TuckerMPI \cite{BKK20}, which uses the \STHOSVD{} algorithm. 
Similarly to TuckerMPI, our implementations leverage distributed-memory clusters to efficiently compute the Tucker decomposition of large multidimensional datasets. 
We employ the following data distribution scheme, proposed in \cite{BKK20}, in our implementations. To distribute a $d$-way input tensor, the processors are organized in a $d$-way processor grid, the dimensions of which are user-determined. 
Each processor owns a subtensor of the input tensor. 
For example, for a $8\times 6 \times 2$ tensor and a $2\times 3\times 1$ processor grid, each processor owns a $4\times 2 \times 2$ subtensor. 
All matrices involved in our algorithms are stored redundantly by every processor.

In the following sections we describe two optimizations for computing the sketch tensor via multi-TTMs as well as parallel implementations of the two algorithms from \cref{sec:seq_algs} with the smallest computational cost, \cref{alg:rsthosvdkron} and \cref{alg:rhosvdrekron}. 
We also compare our implementations with previous work from \cite{choi2018high}.   

\subsection{All-at-Once Multi-TTM}\label{sec:aao-mttm}
The multi-TTM operation is one of the most expensive kernels of our algorithms and appears twice; we compute a multi-TTM both to form the sketch tensor $\T{Y}$ and to form the core tensor $\hat{\T{G}}$ (e.g. in \cref{line:sketchMultiTTM} of \cref{alg:rhosvdrekron}). 
It is thus crucial to optimize this operation. 

A parallel implementation of a single TTM is proposed in \cite{BKK20}. One way to implement the multi-TTM is to simply perform this existing TTM algorithm multiple times, as shown in \cref{alg:is-ttm}; we call this approach the in-sequence multi-TTM or IS-mTTM. Note that in the algorithms bars over letters denote local data. In this in-sequence approach, a reduce-scatter is performed at the end of each TTM
operation, reducing the amount of data each processor owns so that the computation cost in the next TTM is also reduced. Generally, this approach performs additional communications to obtain lower computational cost. However, depending on the size of the local tensor, the reduction in computational cost may not justify the increased communication cost.

Our algorithm is shown in \cref{alg:aao-ttm}, which we call the all-at-once multi-TTM or 
\vspace{-.3cm}

\hspace*{-\parindent}%
\begin{minipage}[t]{.49\textwidth}
  \begin{algorithm}[H]
    \caption{In-sequence multi-TTM}\label{alg:is-ttm}
    \begin{algorithmic}[1]\footnotesize
      \Function{$\T[\bar]{Y}=$ IS-mTTM}{$\bar{\T{X}}, j, \{\M{M}_j\}, \T{P}$} 
        \State $(p_1,\,\dots,\, p_d) = \text{procID}(\T{P})$
        \State $\bar{\T{T}} = \bar{\T{X}}$
        \For{$i = 1:d$ and $i\neq j$}
          \State $\T{F}=\T{P}(p_1,\dots,p_{i-1},:, p_{i+1},\dots, p_d)$
          \State $\bar{\M{T}}_{(i)} = \M[\bar]{M}_i\bar{\M{T}}_{(i)}$ 
          \State $\bar{\M{Y}}_{(i)}$=\Call{Reduce-Scatter}{$\bar{\M{T}}_{(i)},\,\T{F}$}
        \EndFor
      \EndFunction
    \end{algorithmic}  
  \end{algorithm}
\end{minipage}
\hfill
\begin{minipage}[t]{.49\textwidth}
  \begin{algorithm}[H]
    \caption{All-at-once multi-TTM}\label{alg:aao-ttm}
    \begin{algorithmic}[1]\footnotesize
      \Function{$\T[\bar]{Y}=$ AAO-mTTM}{$\bar{\T{X}}, j, \{\M{M}_j\}, \T{P}$} 
        \State $(p_1,\,\dots,\, p_d) = \text{procID}(\T{P})$
        \State $\bar{\T{T}} = \bar{\T{X}}$
        \For{$i = 1:d$ and $i\neq j$}
          \State $\bar{\M{T}}_{(i)} = \M[\bar]{M}_i \bar{\M{T}}_{(i)} $
        \EndFor
					\State $\T{S}=\T{P}(:,\,\dots,\,:,\,p_j,\,:,\, \dots,\, :)$
					\State $\bar{\M{Y}}_{(j)}$ = \Call{Reduce-Scatter}{$\bar{\M{T}}_{(j)},\T{S}$}
      \EndFunction
    \end{algorithmic}    
  \end{algorithm}  
  \vspace{.1cm}
\end{minipage}

\noindent AAO-mTTM. In this approach, we avoid communication until all matrices have been multiplied with the local tensor. 
This strategy reduces communication by increasing the cost of storage and computation. 

The most significant difference between \cref{alg:is-ttm} and \cref{alg:aao-ttm} is that at the end of any iteration $i$ of the in-sequence approach, we form the intermediate result $\T{Y} = \T{X} \times_1 \M{M}_1 \times \dots \times_i \M{M}_i$; in the all-at-once approach, each processor stores a contribution to $\T{Y}$ until all iterations are completed and the all-reduce at the end of the algorithm forms the final result.   

\subsubsection{Cost Analysis}
To simplify the notation, we assume that the input tensor is a $d$-way cubic tensor $\T{X} \in \mb{R}^{n \times \dots \times n}$, that the processor tensor is size $q$ in each mode ($q^d = P$ processors in total), and that the input matrices $\{\M{M}_i\}_{i=1}^d$ are of the same size $s\times n$ with $s < n$.   
With this notation, we analyze the per-processor compuation and communication costs of performing the multi-TTM $\T{X} \times_1 \M{M}_1 \times \dots \times_{j-1} \M{M}_{j-1}\times_{j+1} \M{M}_{j+1} \times \dots  \times_d \M{M}_d$ using \Cref{alg:aao-ttm} and compare it with that of using \Cref{alg:is-ttm}.

The computational cost of \cref{alg:is-ttm} can be written as
\begin{equation}\label{eq:isTTM-comp-cost}
  C_{\text{in-sequence}} = 2\left(\frac{sn^d}{q^d} + \frac{s^2n^{d-1}}{q^d} + ... + \frac{s^{d} n}{q^d}\right) = 2\left(\sum_{i=1}^{d} \frac{s^in^{d-i+1}}{q^d}\right)\,,
\end{equation}
and the computational cost of \cref{alg:aao-ttm} can be written as
\begin{equation}\label{eq:aaoTTM-comp-cost}
  C_{\text{all-at-once}} =  2\left(\frac{sn^d}{q^d} + \frac{s^2n^{d-1}}{q^{d-1}} + ... + \frac{s^{d} n}{q^2}\right) = 2\left(\sum_{i=1}^{d} \frac{s^in^{d-i+1}}{q^{d-i+1}}\right)\,.
\end{equation} 

The $i$th terms of the summations in both \cref{eq:aaoTTM-comp-cost} and \cref{eq:isTTM-comp-cost} represent the cost of multiplying the local factor matrix $\M[\bar]{M}_i$ of size $s\times \frac{n}{q}$ and the $i$th mode unfolding of tensor $\bar{\T{Y}} = \bar{\T{X}} \times_1 \M[\bar]{M}_1 \times \dots \times_{i-1} \M[\bar]{M}_{i-1}$. 
In \cref{alg:is-ttm}, due to the reduce-scatter at each iteration, $\bar{\M{Y}}_{(i)}$ is of size $\frac{n}{q} \times \frac{s^{i-1}n^{d-i}}{q^{d-1}}$. In \cref{alg:aao-ttm}, $\M[\bar]{M}_i$ is of the same size. 
However, since the reduction is delayed until the last mode, $\bar{\M{Y}}_{(i)}$ is of size $\frac{n}{q} \times \frac{s^{i-1}n^{d-i}}{q^{d-i}}$.
Comparing \cref{eq:isTTM-comp-cost} and \cref{eq:aaoTTM-comp-cost}, it is easy to see that the computational cost of an \aaoTTM{} is always higher than that of an \isTTM{} because each of the summands in \cref{eq:aaoTTM-comp-cost} is at least as large as the corresponding term in \cref{eq:isTTM-comp-cost}. This increase can be small, however, in certain cases. Note the two series of summands are geometric and have the same leading term. For \cref{eq:isTTM-comp-cost}, the ratio of the series is $\frac{s}{n}$ while the ratio of the series in \cref{eq:aaoTTM-comp-cost} is $\frac{sq}{n}$. As the subranks become smaller compared to the tensor dimensions (i.e. $s \ll n$), the sums of the two series get closer to their first terms and thus their difference becomes smaller. 

Using the same notations and the $\alpha$-$\beta$-$\gamma$ model \cite{Chan2007}, we can express the communication cost of \cref{alg:is-ttm} and \cref{alg:aao-ttm}. In \cref{alg:aao-ttm}, there is only one communication step at the end where all $P$ processors communicate their local tensor $\bar{\T{T}}$. $\bar{\T{T}}$ is a $d$-way tensor with size $s$ for all of its modes except for the $j$th mode which has size $\frac{n}{q}$. Therefore, the communication cost for each processor is:
  $\alpha \mathcal{O}(\log P) + \beta \mathcal{O}\left(\frac{s^{d-1}n}{q}\right)$.
The communication cost of \cref{alg:is-ttm} is more complicated because there are $d-1$ communication steps and each one involves $\bar{\T{T}}_{(i)}$ which is changing in size. At the $i$th iteration of the for loop, $\bar{\T{T}}_{(i)}$ has size $s \times \frac{s^{i-1}n^{d-i}}{q^{d-1}}$. Therefore, the communication cost can be written as
	$\alpha\mathcal{O}( d \log q) + \beta \mathcal{O}\left(\sum_{i=1}^{d-1}\frac{s^in^{d-i}}{q^{d-1}} \right) = \alpha\mathcal{O}(\log P) + \beta \mathcal{O}\left(\frac{sn^{d-1}}{q^{d-1}}\right)$.
We obtain the right hand side of the equation using the assumption that $s \ll n$ so that the summation is approximated by its first summand. 
When $sq < n$, the communication cost of \aaoTTM{} is smaller. 
As the ratio $\frac{n}{sq}$ increases, the benefits of using \aaoTTM{} become more substantial. 

\subsection{Dimension Tree Optimization}
In \cref{sec:dimension_tree}, we discuss how dimension trees can be used to make the randomized sketches less expensive for \cref{alg:rhosvdrekron}. 
In \cref{alg:dimTreeWithAaottm}, we present an implementation using AAO-mTTM. 
Here, $\V{m}$ and $\V{n}$ are sets of integers in the range $[1,d]$. 
This algorithm returns $\T{Y}^{(j)}$, the sketch of $\Mz{X}{j}$ in tensor format, for all integers $j\in [1,d]$.
Since dimension trees are a tool for reusing only local intermediate results, \cref{alg:dimTreeWithAaottm} can also be modified to use IS-mTTM. 
\begin{algorithm}[!ht]
  \begin{algorithmic}[1]\footnotesize
  \caption{All modes multi-TTM using the dimension tree optimization and \aaoTTM{}. }\label{alg:dimTreeWithAaottm}
  \Function{$\{\T{Y}^{(j)}\}=$All-Modes-Multi-TTM}{$\bar{\T{X}}$, $\{\M[\bar]{\Phi}_j\}$,\, $\V{m}$, $\V{n}$, $\T{P}$}
    \State $(p_1,\,\dots,\, p_d) = \text{procID}(\T{P})$
    \State $\bar{\T{Y}} = \bar{\T{X}}$
    \For{$i \in \V{n}$ }
      \State $\bar{\M{Y}}_{(i)} = \bar{\M{\Phi}}_i\bar{\M{Y}}_{(i)} $
    \EndFor
    \If{$\V{m}$ only contains 1 integer, $j$}
      \State $\T{S}=\T{P}(:,\,\dots,\,p_j,\, \dots,\, :)$
      \State $\bar{\T{Y}}^{(j)}$ = \Call{Reduce-Scatter}{$\bar{\T{Y}},\T{S}$}
    \Else
      \State split $\V{m}$ in equal half $\V{m}_1$ and $\V{m}_2$
      \State \Call{All-Modes-Multi-TTM}{$\bar{\T{Y}}$, $\{\bar{\M{\Phi}}_j\}$, $\V{m}_1$, $\V{m}_2$, $\T{P}$}
      \State \Call{All-Modes-Multi-TTM}{$\bar{\T{Y}}$, $\{\bar{\M{\Phi}}_j\}$, $\V{m}_2$, $\V{m}_1$, $\T{P}$}
    \EndIf
  \EndFunction
  \end{algorithmic}
\end{algorithm}

\subsection{Principal Algorithms}
We now combine the discussed multi-TTM approaches and the dimension tree optimization within parallel implementations of our two best algorithms, \cref{alg:rsthosvdkron,alg:rhosvdrekron}. The pseudocode is provided in \cref{alg:para_rsthosvdkron,alg:para_rhosvdkronreuse}, respectively.  Note that for both of these algorithms, when the size of core $\hat{\T{G}}$ is large, it is better to perform the \isTTM{} as described in \cref{alg:is-ttm} to produce a distributed $\hat{\T{G}}$ so that the parallel STHOSVD can be used to reduce cost. When $\hat{\T{G}}$ is very small, performing STHOSVD in parallel can be counterproductive as the communication cost will dominate; it is better in this case to perform an all-gather among all processors after the \isTTM{}, producing $\hat{\T{G}}$ redundantly on every processor and then use the sequential STHOSVD.

\begin{algorithm}[!ht]
  \begin{algorithmic}[1]\footnotesize
		\caption{Parallel algorithm for \cref{alg:rsthosvdkron}}
		\label{alg:para_rsthosvdkron}
    \Function{\rSTHOSVDkron}{$\T{X}$, $\V{r}$, $p$, $\T{P}$} \Comment{$\T{X}$ is distributed, the local part of $\T{X}$ is denoted as $\bar{\T{X}}$}
        \State $\hat{\T{G}} = \T{X}$
        \State Redundantly compute matrix of subranks $\M{S}$ 
        \For{$i$ = 1:d}
        \For{$j$ = 1:d and $j\neq i$}
            \State Redundantly draw $d-1$ random matrices $\M{\Phi}_{i,j}\in \mb{R}^{n_j \times s_{i,j}}$
        \EndFor
        \State $\T[\bar]{Y} \leftarrow  \Call{AAO-mTTM}{\hat{\T{G}},\, i,\, \{\M{\Phi}_{i,1} \dots \M{\Phi}_{i,d}\},\, \T{P}}$ \Comment{Can also use \isTTM{}}
        \State $\T{Y} = \Call{All-Gather}{\T[\bar]{Y},\T{P}}$
        \State $\M[\hat]{U}_i = $ \Call{QR}{$\Mz{Y}{i}$} \Comment{\Call{QR}{} is serial as every processor owns global $\T{Y}$}
        \State $\hat{\T{G}} \leftarrow  \Call{TTM}{\hat{\T{G}},\, \M[\hat]{U}_{i}^{\Tra},\,i}$ \Comment{For a single TTM we use the implementation proposed in \cite{BKK20}} \label{line:para_rsthosvdkron:applyFactorMatrices}
        \EndFor
        \State $[\T{G}, \{\M{V}_i\}] = $ STHOSVD$(\hat{\T{G}}, \V{r})$ 
        \For{$i = 1,\, \dots,\, d$}
          \State $\M{U}_i = \hat{\M{U}}_i \M{V}_i$ \Comment{Computed with local matrix multiplication}
        \EndFor
    \EndFunction
  \end{algorithmic}
\end{algorithm}

\begin{algorithm}[!ht]
	\caption{Parallel algorithm for \cref{alg:rhosvdrekron} with \aaoTTM{} and dimension trees}
	\label{alg:para_rhosvdkronreuse}
	\begin{algorithmic}[1]\footnotesize
		\Function{\rHOSVDkronreuse{}}{$\T{X}$, $\V{r}$, $p$, $\T{P}$}
				\State Compute subranks $\V{s}$
				\For{$i$ = 1:$d$}
					\State Redundantly draw $d-1$ random matrices $\M{\Phi}_{i}\in \mathbb{R}^{s_i\times n_i}$
				\EndFor
				\State $\{\T{Y}^{(1)}, \dots, \T{Y}^{(d)}\} = \Call{All-Modes-Multi-TTM}{\bar{\T{X}},  \{\M{\Phi}_1, \dots, \M{\Phi}_d\}, \{1,\dots, d\}, \emptyset, \T{P}}$
				\For{$i$ = 1:$d$}
					\State $\hat{\M{U}}_i = $\Call{QR}{$\M{Y}^{(i)}_{(i)}$} \Comment{Serial QR decomposition of the mode $i$ unfolding of $\T{Y}^{(i)}$}
				\EndFor
				\State $\hat{\T{G}} = \Call{IS-mTTM}{\bar{\T{X}}, \emptyset, \{\hat{\M{U}}_1^{\Tra}, \dots, \hat{\M{U}}_d^{\Tra}\}, \T{P}}$
				\State $[\T{G}, \{\M{V}_j\}] = $ STHOSVD$(\hat{\T{G}}, \V{r})$ 
			\For{$i = 1,\, \dots,\, d$}
				\State $\M{U}_i = \hat{\M{U}}_i \M{V}_i$ \Comment{Computed with local matrix multiplication}
			\EndFor
		\EndFunction
	\end{algorithmic}
\end{algorithm}

\subsection{Comparison to Previous Work}\label{ssec:comparison}
We compare our parallel algorithms with previous approaches, namely the parallel STHOSVD algorithm in \cite{BKK20} and the approach from \cite{choi2018high}. Assume that the input tensor is a $d$-way tensor $\T{X} \in \mb{R}^{n \times \dots \times n}$ with rank $(r,r,\dots, r)$ and that each mode of the $d$-way processor tensor has size $q$. 
We also assume that $s < r < l \ll n$ where $s$ is the subrank for each mode and $\ell = r+p$ with $p$ the oversampling parameter. 
Here $s=\ell^{1/(d-1)}\approx r^{1/(d-1)}$.
In \cite{choi2018high}, Choi et al. proposed a data distribution scheme and a tensor matricization strategy that reduces the communication costs of the Gram and TTM kernels. 
More specifically, in this new method, before the Gram computation, communication is performed every other mode to redistribute the tensor unfolding from a 2D distribution to a block-column 1D distribution, which can avoid the communication cost of the later TTM operations required to form the core tensor. 
This method is shown to achieve speedup over the parallel implementation proposed in \cite{BKK20}. 
However, these optimizations are not suitable for our randomized algorithms for the following reason. Instead of computing the Gram matrix of each mode unfolding, we compute each mode sketch with a multi-TTM. 
Since each sketch is much smaller at the end of the multi-TTM, it is beneficial to delay communicating the sketches as much as possible before all multi-TTM's are completed. 
However, the redistribution proposed by \cite{choi2018high} requires every processor to communicate the entire uncompressed local tensor before the computation. 
While \cite{choi2018high} also uses randomization to reduce computation, their focus is on reducing the cost of computing the eigenvectors of the Gram matrix, which is achieved by using a modified randomized SVD to replace the eigendecomposition. 

To illustrate the benefit of using Kronecker-structured random matrices, we also compare \cref{alg:para_rsthosvdkron} and \cref{alg:para_rhosvdkronreuse} with the parallel version of \cref{alg:rsthosvd}, which uses dense Gaussian random matrices. The parallelized \cref{alg:rsthosvd} is very similar to \cref{alg:para_rsthosvdkron} with the only difference being that we use the parallel multi-TTM to apply the Kronecker-structured random matrices to the input tensor while in parallelized \cref{alg:rsthosvd} we use a single parallel TTM operation to apply each of the dense Gaussian random matrices.

\begin{table}[h]
	\centering \footnotesize
	{\renewcommand\arraystretch{1.6}
	\setlength\tabcolsep{3pt}
\begin{tabular}{c|c|c|c|c}
\hline
\textbf{Algorithm} & \begin{tabular}[c]{@{}c@{}}\textbf{Form} $\{\M{U}\}$\\ \textbf{comp cost}\end{tabular} & \begin{tabular}[c]{@{}c@{}}\textbf{Form} $\{\M{U}\}$\\ \textbf{comm cost}\end{tabular} & \multicolumn{1}{c|}{\begin{tabular}[c]{@{}l@{}}\textbf{Form} $\T{G}$\\ \textbf{comp cost}\end{tabular}} & \begin{tabular}[c]{@{}l@{}}\textbf{Form} $\T{G}$\\ \textbf{comm cost}\end{tabular} \\ \hline
STHOSVD \cite{BKK20} 
& $\frac{n^{d+1}}{P}$
& $\alpha\mathcal{O}(dP^{1/d}) + \beta\mathcal{O}(\frac{n^d}{P})$
& $2\frac{rn^d}{P}$
& $\alpha\mathcal{O}(\log P) + \beta \mathcal{O}(\frac{rn^{d-1}}{P^{1-1/d}})$  \\\hline
\cite{choi2018high} 
& $\frac{n^{d+1}}{P}$
& $\alpha\mathcal{O}(dP) + \beta\mathcal{O}(\frac{n^d}{P})$ 
& $2\frac{rn^d}{P}$
& -  \\ \hline
\begin{tabular}[c]{@{}l@{}}\cref{alg:para_rsthosvdkron}\end{tabular} 
& $2\frac{r^{1/(d-1)}n^d}{P}$     
& $\alpha\mathcal{O}(d\log P) + \beta\mathcal{O}(\frac{drn}{P^{1/d}})$  
& $2\frac{rn^d}{P}$         
& $\alpha\mathcal{O}(\log P) + \beta \mathcal{O}(\frac{rn^{d-1}}{P^{1-1/d}})$  \\ \hline
\begin{tabular}[c]{@{}l@{}}\cref{alg:para_rhosvdkronreuse}\end{tabular} 
& $4\frac{r^{1/(d-1)}n^d}{P}$
& $\alpha\mathcal{O}(d\log P) + \beta\mathcal{O}(\frac{drn}{P^{1/d}})$ 
& $2\frac{rn^d}{P}$         
& $\alpha\mathcal{O}(\log P) + \beta \mathcal{O}(\frac{rn^{d-1}}{P^{1-1/d}})$  \\ \hline
\end{tabular}}
\caption{Comparison of computation and communication cost per processor. More details can be found in \cref{sec:sm_cost}. }\label{tab:parallel_cost_comp}
\end{table}

\section{Experimental Results}\label{sec:experiments}
We now demonstrate the numerical benefits of our algorithms by considering the accuracy of our sequential algorithms in \cref{sec:accuracy}, the performance of the multi-TTM and dimension tree optimizations in \cref{sec:ismttm_vs_aaomttm,sec:dimTree_results}, respectively, and the performance of the parallel implementations of our randomized algorithms on synthetic data in \cref{sec:strong_scaling} and on two real datasets in \cref{sec:miranda,sec:sp}.

\paragraph*{Computing platform}
The results shown in \cref{sec:accuracy} are generated by running MATLAB implementations of the sequential algorithms on a single node server. The experiments shown from \cref{sec:ismttm_vs_aaomttm} to \cref{sec:sp} are run on the Andes cluster at Oak Ridge Leadership Computing Facility. 
The system consists of 704 compute nodes with 2 AMD EPYC 7302 16-core CPU's and 256 GB of RAM. 
We directly call OpenBLAS and the Netlib implementation of LAPACK for local linear algebra kernels, which are the only available libraries on Andes. 

\subsection{Accuracy Results}\label{sec:accuracy}
We present an experiment on a synthetic tensor that demonstrates the accuracy of our algorithms compared to existing deterministic and randomized algorithms. In this experiment, we use both SRHT and Gaussian random matrices to compare numerical accuracy to the theoretical results we derived in \cref{sec:erroranalysis}.

We construct a synthetic 3-way tensor $\T{X} \in \mb{R}^{500 \times 500 \times 500}$ by forming a (super-) diagonal tensor with geometrically decreasing entries and multiplying that tensor by a random orthogonal matrix along each mode. 
We set the largest entry of the original tensor to be 1 and choose the rate at which the core entries decrease to be 0.4 so that the 40th entry is approximately machine precision.
In our experiment, we compress this tensor to rank $(10,10,10)$ using an oversampling parameter of $p = 5$. 
We show boxplots of the relative error over 100 trials in \cref{fig:boxplot}, comparing results from all our algorithms (\cref{alg:rhosvd,alg:rsthosvd,alg:rsthosvdkron,alg:rhosvdrekron}) using Gaussian random matrices as well as our principal algorithms (\cref{alg:rsthosvdkron,alg:rhosvdrekron}) using SRHT random matrices. We also compare our truncation strategy to the strategy in \cite{che2021efficient}, and compare the relative error from all randomized algorithms to the relative error obtained from deterministic STHOSVD, \cref{alg:sthosvd}.

In \cref{fig:boxplot}, we see that the relative errors for all our randomized algorithms deviate from the deterministic relative error by at most 10\% for the given rank (the medians are within 1\%)
Also, the relative errors for each trial are very close together with each algorithm, as even the outliers are within the same order of magnitude, with a standard deviation of $1.9\times 10^{-5}$.
Regardless of random matrix distribution or whether we reuse $\M{\Phi}_j$ matrices or generate new matrices for each mode, we do not lose significant accuracy compared to either the deterministic or standard randomized approaches. We can also see that the truncation strategy we employ in \cref{alg:rhosvd,alg:rsthosvd,alg:rsthosvdkron,alg:rhosvdrekron} is much more consistently accurate than the strategy in \cite{che2021efficient}.

\begin{figure}[!ht]
\centering
\tikzexternaldisable
\tikzsetnextfilename{relative_err_gaus_box_plot}
\begin{subfigure}{.32\textwidth}
	\centering
\resizebox{.91\textwidth}{!}{
\begin{tikzpicture}
  \begin{axis}
    [
    title = {Gaussian Random Matrices},  
    title style={at={(.5,1.1)},anchor=north},
    boxplot/draw direction=y,
    xtick={1,2,3,4},
    xticklabels={\cref{alg:rhosvd}, \cref{alg:rsthosvd},\cref{alg:rsthosvdkron}, \cref{alg:rhosvdrekron}},
    xticklabel style={rotate=45, anchor=east},
    xmin=0,xmax=5,
    legend style={at={(1,.98)},anchor=north east},
    ymin=.0001,ymax=.00012
    ]
    \addplot[dashed,color=black] coordinates{(0,0.000104858)(5,0.000104858)};
    \addlegendentry{\STHOSVD};
    \addplot+[
    boxplot prepared={
      median=0.000105385,
      upper quartile=0.000105481,
      lower quartile=0.00010515,
      upper whisker=0.000105883,
      lower whisker=0.000104927
    },
    ] coordinates {(0,0.000115701)(0,0.000110455)(0,0.000107454)(0,0.000107085)(0,0.000106851)(0,0.000106615)(0,0.000106476)(0,0.00010618)(0,0.000106165)(0,0.000106126)(0,0.000106109)};
    \addplot+[
    boxplot prepared={
      median=0.000105464,
      upper quartile=0.000106586,
      lower quartile=0.000105142,
      upper whisker=0.000108635,
      lower whisker=0.000104954
    },
    ] coordinates {(0,0.000109628)(0,0.0001097)(0,0.000110248)(0,0.000110396)(0,0.000110454)(0,0.000110455)(0,0.000111067)(0,0.000111545)(0,0.0001153)(0,0.000188826)};
    \addplot+[
    boxplot prepared={
      median=0.000105339,
      upper quartile=0.000106089,
      lower quartile=0.000105143,
      upper whisker=0.000107096,
      lower whisker=0.000104927
    },
    ] coordinates {(0,0.000107898)(0,0.000108203)(0,0.000108592)(0,0.000115701)(0,0.000136605)};
    \addplot+[
    boxplot prepared={
      median=0.000104994,
      upper quartile=0.000105205,
      lower quartile=0.000104918,
      upper whisker=0.000105562,
      lower whisker=0.00010487
    },
    ] coordinates {(0,0.000107696)(0,0.000106753)(0,0.000106538)(0,0.000106345)(0,0.000106236)(0,0.000106145)(0,0.000105954)(0,0.000105645)};
   
  \end{axis};

\end{tikzpicture}}
	\vspace{.1cm}
\end{subfigure}
\tikzexternaldisable
\tikzsetnextfilename{relative_err_srht_box_plot}
\begin{subfigure}{.32\textwidth}
	\centering

\resizebox{.89\textwidth}{!}{
\begin{tikzpicture}
  \begin{axis}
    [
    title = {SRHT Random Matrices},  
    title style={at={(.5,1.1)},anchor=north},
    boxplot/draw direction=y,
    xtick={1,2},
    xticklabels={\cref{alg:rsthosvdkron},\cref{alg:rhosvdrekron}},
    xticklabel style={rotate=45, anchor=east},
    xmin=0,xmax=3,
    ymin=.0001,ymax=.00012
    ]
    \addplot[dashed,color=black] coordinates{(0,0.000104858)(9,0.000104858)};

    \addplot+[
    boxplot prepared={
      median=0.00010488,
      upper quartile=0.00010492,
      lower quartile=0.00010487,
      upper whisker=0.00010497,
      lower whisker=0.00010486
    },
    ] coordinates {(0,0.00010501)(0,0.00010504)(0,0.00010506)(0,0.00010506)(0,0.00010509)(0,0.00010513)(0,0.00010517)(0,0.00010524)(0,0.00010538)};
    \addplot+[
    boxplot prepared={
      median=0.00010503,
      upper quartile=0.000105173,
      lower quartile=0.00010495,
      upper whisker=0.0001054,
      lower whisker=0.00010487
    },
    ] coordinates {(0,0.00010554)(0,0.00010556)(0,0.00010558)(0,0.00010565)(0,0.00010585)(0,0.00010587)(0,0.0001059)(0,0.00010599)(0,0.00010628)(0,0.00010654)(0,0.00010712)(0,0.00010901)(0,0.00010922)};
    
  \end{axis};

\end{tikzpicture}}
	\vspace{.1cm}
\end{subfigure}
\tikzexternaldisable
\tikzsetnextfilename{relative_err_comp_boxplot}
\begin{subfigure}{.32\textwidth}
	\centering

\resizebox{.85\textwidth}{!}{
\begin{tikzpicture}
  \begin{axis}
    [
    title = {Truncation Comparison (Gaussian)},  
    title style={at={(.6,1.1)},anchor=north},
    boxplot/draw direction=y,
    xtick={1,2},
    xticklabels={\cref{alg:rsthosvdkron}, \cite{che2021efficient} trunc.},
    xticklabel style={rotate=45, anchor=east},
    xmin=0,xmax=3,
    ]
    \addplot[dashed,color=black] coordinates{(0,0.000104858)(3,0.000104858)};

    \addplot+[
    boxplot prepared={
      median=0.000105339,
      upper quartile=0.000106089,
      lower quartile=0.000105143,
      upper whisker=0.000107096,
      lower whisker=0.000104927
    },
    ] coordinates {(0,0.000107898)(0,0.000108203)(0,0.000108592)(0,0.000115701)(0,0.000136605)};
    
    \addplot+[
    boxplot prepared={
      median=0.000682941,
      upper quartile=0.001125117,
      lower quartile=0.000424934,
      upper whisker=0.00211586,
      lower whisker=0.000146188
    },
    ] coordinates {(0,0.0021844)(0,0.002413532)(0,0.002717415)(0,0.002944279)(0,0.003034444)(0,0.003100188)(0,0.003135402)(0,0.003671784)(0,0.004052918)};
    
  \end{axis};

\end{tikzpicture}}
\end{subfigure}
\caption{Boxplots of relative errors for our randomized algorithms using Gaussian random matrices (left) and SRHT random matrices (center) compared to the relative error for STHOSVD for the synthetic tensor with geometrically decreasing values. We also compare the truncation methods we use to those used in \cite{che2021efficient} (right).}
\label{fig:boxplot}
\end{figure}
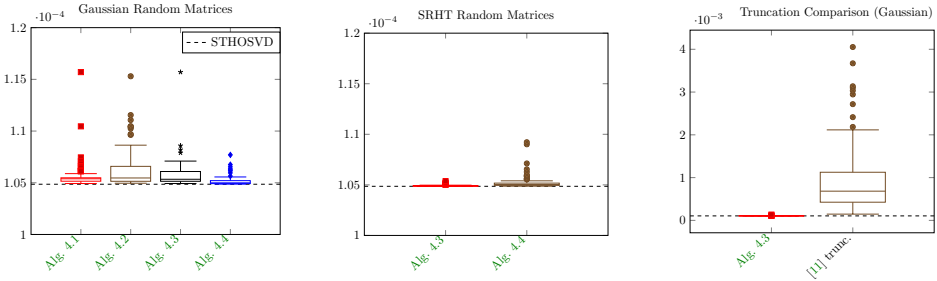

We show an additional accuracy experiment in \cref{sec:sm_accuracy}. 
Overall, we see comparable relative error for \cref{alg:rsthosvdkron,alg:rhosvdrekron} to both deterministic STHOSVD and existing randomized approaches.
While our theoretical results (\cref{thm:tensorbound}) apply only to SRHT matrices, these experimental results suggest that Gaussian matrices perform comparably in terms of accuracy.

\subsection{In-Sequence TTM vs All-At-Once TTM}\label{sec:ismttm_vs_aaomttm}
To compare the performance of the two TTM approaches we discuss in \cref{sec:aao-mttm}, we conduct an experiment performing an in-sequence multi-TTM and an all-at-once multi-TTM of a 3-way tensor $\T{X} \in \mb{R}^{800 \times 800 \times 800}$ with matrices $\M{U}, \M{V}, \M{W} \in \mb{R}^{s \times 800}$, with varying $s$. 
For this experiment, we use 64 cores (2 nodes) arranged in a $4\times4\times4$ processor grid.
The results of this experiment are shown in \cref{fig:ttmComp}, where we can see that when the matrices have relatively few rows (when $sq \ll n$), the all-at-once multi-TTM is much more communication-efficient. 
We observe speedups ranging from 27\% to over $2\times$. 
This fits our prediction in \cref{sec:aao-mttm}. 
However, when $\frac{n}{sq}$ is very large (i.e. the $s=5$ case), both algorithms are cheap and communication is not as dominant. 
When $\frac{n}{sq}$ is small, all-at-once multi-TTM has increased computational cost, and also loses its advantage in communication cost. 
In this case, \isTTM{} is preferred to \aaoTTM.
For this reason, we think the all-at-once optimization is more suitable for the sketching phase where the random matrices tend to have very few rows.

Note that the gray bars in \cref{fig:ttmComp} represent overhead cost. 
For \isTTM{}, this overhead mainly comes from reorganizing the data in memory before and after communication (MPI collectives). 
Since \aaoTTM{} avoids those communication steps, it also avoids those reorganizing costs.
For higher dimensions, the benefits of \aaoTTM{} still depend on $\frac{n}{sq}$ being large. 
If this ratio is fixed, \aaoTTM{} will continue to outperform \isTTM{}.

\subsection{Dimension Tree Optimization}\label{sec:dimTree_results}
To demonstrate the benefits of using dimension trees, we run \cref{alg:rhosvdrekron} with and without dimension trees on synthetic tensors with an increasing number of modes such that the total size of the input tensor and its rank are kept close to constant. 
We benchmark the time it takes for both methods to apply the random matrices $\{\Phi\}$ to the input tensor and present the results in \cref{fig:dimTree_Comp}.
This computation corresponds to \cref{line:sketchMultiTTM} of \cref{alg:rhosvdrekron}. 
Since the communication and overhead costs are low, we see that the practical speedup from using dimension trees aligns closely with the theoretical prediction, with a computational reduction of $d/2$ as described in \cref{sec:dimension_tree}.

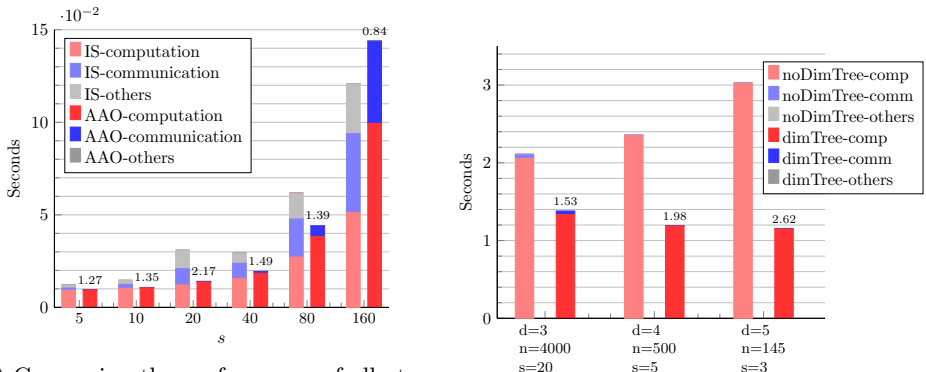
\begin{figure}[!ht]
	\renewcommand{\datafile}{data/TTM_combined.dat}
	\tikzexternaldisable
	\tikzsetnextfilename{TTM_comp}
	\begin{subfigure}{.45\textwidth}
		\centering
\resizebox{.9\textwidth}{!}{
\begin{tikzpicture}
  \begin{axis}[
    axis lines*=left,
    ymajorgrids,
    yminorgrids,
    minor tick num=4,
    ymin=0, 
    ymax= 0.15,
    scaled y ticks=base 10:2,
    ylabel = {Seconds},
    ylabel near ticks,
    ybar stacked,
    bar width=8pt,
    xtick=data,
    xmin=-1,
    xmax=22.5,
    xtick={0.75, 4.75, 8.75, 12.75, 16.75, 20.75},
    xticklabels ={
      5, 10, 20, 40, 80, 160, 320
    },
    xticklabel style={font=\small,anchor= north},
    xlabel = {$s$},
    cycle list={
      {fill=red!80, draw=red!80},
      {fill=blue!80, draw=blue!80},
      {fill=gray!80, draw=gray!80}
    },
    visualization depends on=rawy \as \rawy,
    point meta={(\thisrow{IS-computation}+\thisrow{IS-communication}+\thisrow{IS-others})/(\thisrow{AAO-computation}+\thisrow{AAO-communication}+\thisrow{AAO-others}) },
    nodes near coords style={
        black,
        font=\scriptsize,
        shift={(axis direction cs:0,\rawy/2)},
        anchor=south,
    },
  ]
    \addplot table [x=AAO-ind, y=AAO-computation]   {\datafile};
    \addplot table [x=AAO-ind, y=AAO-communication] {\datafile};
    \addplot+ [nodes near coords] table [x=AAO-ind, y=AAO-others] {\datafile};
  \end{axis}

  \begin{axis}[
    axis lines*=left,
    ymin=0, 
    ymax= 0.15,
    scaled y ticks=base 10:2,
    ybar stacked,
    bar width=8pt,
    xmin=-1,
    xmax=22.5,
    ticks=none,
    legend cell align={left},
    legend entries={IS-computation, IS-communication, IS-others, AAO-computation, AAO-communication, AAO-others},
    legend pos= north west, cycle list={
      {fill=red!50, draw=red!50},
      {fill=blue!50, draw=blue!50},
      {fill=gray!50, draw=gray!50},
      {fill=red!80, draw=red!80},
      {fill=blue!80, draw=blue!80},
      {fill=gray!80, draw=gray!80}
    }
  ]
    \addplot table [x=IS-ind, y=IS-computation]   {\datafile};
    \addplot table [x=IS-ind, y=IS-communication] {\datafile};
    \addplot table [x=IS-ind, y=IS-others]        {\datafile};
    \addplot table [x=IS-ind, y=fake]        {\datafile};
    \addplot table [x=IS-ind, y=fake]        {\datafile};
    \addplot table [x=IS-ind, y=fake]        {\datafile};
  \end{axis}
\end{tikzpicture}}

		\caption{Comparing the performance of \aaoTTM{} and \isTTM{} by multiplying a $800\times 800 \times 800$ synthetic tensor with an $s\times 800$ matrix on all modes ($s\in [5,160]$). The processor grid used is $4\times 4\times 4$. The labels on top of bars indicate the overall speedup achieved by \aaoTTM{} compared to \isTTM{}. \label{fig:ttmComp}}
		\end{subfigure}
		\renewcommand{\datafile}{data/dimTree_comp.dat}
		\tikzexternaldisable
		\tikzsetnextfilename{dimTree_comp}
		\begin{subfigure}{.5\textwidth}
			\centering

\resizebox{.97\textwidth}{!}{
\begin{tikzpicture}
  \pgfplotsset{compat=1.15}
  \begin{axis}[
    axis lines*=left,
    ymajorgrids,
    yminorgrids,
    minor tick num=4,
    ymin=0, 
    ymax= 3.5,
    ylabel = {Seconds},
    ylabel near ticks,
    ybar stacked,
    bar width=11pt,
    xtick=data,
    xmin=-1,
    xmax=11,
    xtick={0.75, 4.75, 8.75},
    xticklabel style={align=left},
    xticklabels ={
      d=3\\n=4000\\s=20,
      d=4\\n=500 \\s=5, 
      d=5\\n=145 \\s=3,
    },
    xticklabel style={font=\small,anchor= north},
    visualization depends on=rawy \as \rawy,
    point meta={(\thisrow{noDimTree-comp}+\thisrow{noDimTree-comm}+\thisrow{noDimTree-others})/(\thisrow{dimTree-comp}+\thisrow{dimTree-comm}+\thisrow{dimTree-others}) },
    nodes near coords style={
        black,
        font=\scriptsize,
        shift={(axis direction cs:0,\rawy/2)},
        anchor=south,
    },
    cycle list={
      {fill=red!80, draw=red!80},
      {fill=blue!80, draw=blue!80},
      {fill=gray!80, draw=gray!80}
    }
  ]
  \addplot table [x=dimTree-ind, y=dimTree-comp] {\datafile};
  \addplot table [x=dimTree-ind, y=dimTree-comm] {\datafile};
  \addplot+ [nodes near coords] table [x=dimTree-ind, y=dimTree-others] {\datafile};
  \end{axis}

  \begin{axis}[
    axis lines*=left,
    ymin=0, 
    ymax= 3.5,
    ybar stacked,
    bar width=11pt,
    xtick=data,
    xmin=-1,
    xmax=11,
    ticks=none,
    legend entries={noDimTree-comp, noDimTree-comm, noDimTree-others, dimTree-comp, dimTree-comm, dimTree-others},
   legend style={at={(1.3,.7)},anchor = east},
    legend cell align={left},
    cycle list={
      {fill=red!50, draw=red!50},
      {fill=blue!50, draw=blue!50},
      {fill=gray!50, draw=gray!50},
      {fill=red!80, draw=red!80},
      {fill=blue!80, draw=blue!80},
      {fill=gray!80, draw=gray!80}
    }
  ]
  \addplot table [x=noDimTree-ind, y=noDimTree-comp] {\datafile};
  \addplot table [x=noDimTree-ind, y=noDimTree-comm] {\datafile};
  \addplot table [x=noDimTree-ind, y=noDimTree-others] {\datafile};
  \addplot table [x=noDimTree-ind, y=fake]        {\datafile};
  \addplot table [x=noDimTree-ind, y=fake]        {\datafile};
  \addplot table [x=noDimTree-ind, y=fake]        {\datafile};
  \end{axis}
\end{tikzpicture}}

			\caption{Performance gain by using the dimension tree optimization for \cref{alg:rhosvdrekron}. We compute the sketch of a cubic synthetic $n$-way tensor where each mode has the same size $n$, resulting in a cubic $d$-way core tensor with each mode having the same size $s$. The labels on top of the bars show speedup gained by using dimension trees. \label{fig:dimTree_Comp}}
		\end{subfigure}	
		\caption{Performance benefits of our multi-TTM and dimension tree optimizations}
\end{figure}

\subsection{Strong scaling on synthetic data}\label{sec:strong_scaling}
In this experiment we benchmark four variations of our randomized algorithms and STHOSVD as a baseline, scaling from 2 nodes (64 cores) to 32 nodes (1024 cores) on a fixed problem size.
The input tensor is a $410\times 410 \times 410 \times 410$ single-precision synthetic tensor ($\approx$ 113 GB) constructed from multiplying a $20\times 20 \times 20\times 20$ randomly generated core with four $410 \times 20$ random matrices. No noise is added to this synthetic tensor so it is exactly low rank. This size is close to the largest tensor we can fit in the memory of 2 nodes, which makes the timing results more consistent and less influenced by noise in the system. Any order of modes used to compute the multi-TTM will not affect the performance as the tensor size and rank are the same across modes. All six algorithms are given target ranks $(20,20,20,20)$ and the randomized ones use oversampling parameter $p=3$. 

The results are presented in \cref{fig:strong_synthetic}, where we see that all six algorithms presented scale well to 1024 cores. 
All randomized algorithms outperform the deterministic \STHOSVD{}, and the randomized algorithms that use Kronecker-structured random matrices outperform the parallel version of \cref{alg:rsthosvd}.
A noticeable speedup of 3--4$\times$ is achieved by using \cref{alg:para_rsthosvdkron} compared to \STHOSVD{}. 
The second-best algorithm is \cref{alg:para_rhosvdkronreuse} with the dimension tree optimization achieving 2--3$\times$ speedup. 
Note that the use of \aaoTTM{} provided little improvement over \isTTM{} in this case, because the compression ratio is very high so the multi-TTMs are not bottlenecks. 
We can also see that the dimension tree optimization does provide noticeable and consistent speedup for \cref{alg:para_rhosvdkronreuse}.

\begin{figure}[!ht]
	\centering
	\renewcommand{\datafile}{data/strong_scaling_algs_min.dat}
	\tikzexternaldisable
	\tikzsetnextfilename{strong_scaling}


\resizebox{.5\textwidth}{!}{
\begin{tikzpicture}
  \begin{axis}[
    xlabel={Number of cores},
    ylabel={Seconds},
    ylabel near ticks,
    legend style={at={(1.45,.75)},anchor = east},
    ymajorgrids=true,
    xmajorgrids=true,
    grid style=dashed,
    xmode = log,
    log basis x = {2},
    log ticks with fixed point,
    xticklabels={0, 64, 128, 256 ,512 ,1024},
    ymode = log,
    log basis y = {2},
    legend cell align={left}
	]
    \addplot+[black] table [x=nprocs, y=sthosvd]{\datafile};
    \addplot+[red] table [x=nprocs, y=rsthosvd]{\datafile};
    \addplot+[yellow] table [x=nprocs, y=rsthosvd-kron-ttm]{\datafile};
    \addplot+[brown] table [x=nprocs, y=rsthosvd-kron-dcttm]{\datafile};
    \addplot+[teal] table [x=nprocs, y=rhosvd-kron-dcttm]{\datafile};
    \addplot+[magenta] table [x=nprocs, y=rhosvd-rekron-dcttm-dimTree]{\datafile};
    \legend{
      \STHOSVD, 
      parallel \ref{alg:rsthosvd}, 
      \ref{alg:para_rsthosvdkron} without AAO-mTTM, 
      \ref{alg:para_rsthosvdkron}, 
      \ref{alg:para_rhosvdkronreuse} without dimTree, 
      \ref{alg:para_rhosvdkronreuse}}
  \end{axis}
\end{tikzpicture}}
	\caption{Strong scaling of different algorithm variants}
	\label{fig:strong_synthetic}
\end{figure}
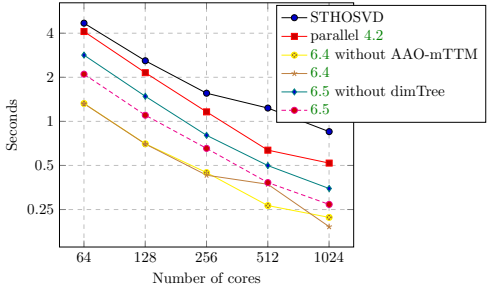

\subsection{Miranda dataset}\label{sec:miranda}
The Miranda dataset \cite{cabot2006reynolds,zhao2020sdrbench} contains three-dimensional simulation data of the density ratios of a non-reacting flow of viscous/diffusive fluids. This dataset is $3072 \times 3072 \times 3072$. Its values are in single-precision and range between 1 and 3. We show visualizations for this dataset in \cref{sec:sm_visuals}. 

In this experiment, we compare five variations of our randomized algorithms as well as the deterministic STHOSVD algorithm using the Miranda dataset. 
We use 4 nodes (128 cores) organized as a $1\times 8\times 16$ tensor. 
The target rank we choose for this run is $(502,\, 504,\, 361)$, which corresponds to a $10^{-2}$ reconstruction error (estimated using pre-computed singular values of the unfolding of the data tensor for each mode). 
The subrank matrix we used for \cref{alg:para_rsthosvdkron} is $\small \begin{bmatrix}1 & 39& 13 \\ 30& 1 & 17 \\ 6 & 61& 1 \end{bmatrix}$. 
For \cref{alg:para_rhosvdkronreuse}, the subrank vector we used is $(20,\, 20,\, 26)$. 
The relative error achieved by STHOSVD is 0.0094, while the relative errors of the randomized algorithms \cref{alg:rsthosvd,alg:para_rsthosvdkron,alg:para_rhosvdkronreuse} are 0.0234, 0.0194, and 0.0189, respectively, which are all within $2.5\times$ of the deterministic error.

The performance results are recorded in \cref{fig:miranda_comp}, and we visualize reconstructed tensors from \cref{alg:sthosvd,alg:rsthosvd,alg:para_rhosvdkronreuse} in \cref{sec:sm_visuals}.
First, we note that \STHOSVD{} is particularly slow when compared to the randomized algorithms, in this case partly because the Miranda dataset has fewer modes and each mode is large. 
As a result, the Gram matrices are large and the eigendecompositions of those Gram matrices are expensive. 
Moreover, the eigendecompositions are carried out redundantly on every processor due to the TuckerMPI assumption of small individual mode dimensions. 
(More details on the parallel implementation of \STHOSVD{} can be found in \cite{BKK20}). 
The randomized algorithms, on the other hand, can avoid this expensive step completely, and we see up to a $16\times$ speedup, comparing \cref{alg:para_rsthosvdkron} to \STHOSVD{}. 
In the next section, we compare the performance of all the algorithms again with a higher-order tensor where each mode is relatively small. 
In that case, the Gram matrices are smaller and the eigendecompositions are cheaper, so the deterministic algorithm appears more competitive. 

Also note that random number generation (forming the random matrices $\{\M{\Omega}\}$) in \cref{alg:rsthosvd} takes up a large percentage of the total time. 
These results demonstrate the benefits of generating fewer random numbers by using Kronecker-structured random matrices. 
Now, comparing the computation cost (red bar) of \cref{alg:rsthosvd} with that of the following algorithms to the right, we can see that using Kronecker-structured random matrices further reduces the computation cost of forming the factor matrices as we have predicted. 
Among the algorithms that use Kronecker-structured random matrices, \cref{alg:para_rsthosvdkron} and \cref{alg:para_rhosvdkronreuse} achieve the best performance. 
Comparing multi-TTM methods, we see that using either IS-mTTM or AAO-mTTM in \cref{alg:para_rsthosvdkron} results in very similar performance. 
Although AAO-mTTM achieved a 3$\times$ speedup in the communication cost of forming the factor matrices (pink bar) over IS-mTTM, the absolute speedup is not significant because the communication cost using IS-mTTM is already very small. 
This is mostly due to the subranks being very small compared to the size of the input tensor. 
Finally, we note that with our optimizations, applying the factor matrices to truncate the input tensor (the blue and light blue bars) now becomes the bottleneck of the algorithm.

\subsection{Stats-Planar dataset}\label{sec:sp}

The SP dataset is generated from the simulation of a statistically stationary planar (SP) methane-air flame \cite{Hemanth2016}. The data has dimensions $500\times 500\times 500\times 11 \times 400$, with the first 3 modes representing a 3D spatial grid, the 4th mode representing 11 variables, and the 5th mode representing time steps. This data has been used in previous studies such as \cite{BKK20} to demonstrate the effectiveness of Tucker decomposition algorithms. In this work, we use the single-precision-max-normalized version of this dataset. We visualize the 250th slice for each of the first three modes of the SP tensor in \cref{sec:sm_visuals}.

Similarly to the experiment on the Miranda dataset, we compare five variations of the randomized algorithm and the deterministic STHOSVD algorithm using 1024 cores (32 nodes). 
The target rank we use is $(31,\, 38,\, 35,\, 6,\, 11)$, which is the rank returned by the STHOSVD algorithm satisfying a $10^{-2}$ error tolerance. 
The subrank vector we used for \cref{alg:para_rhosvdkronreuse} is $\begin{bmatrix}
	2 & 2 & 2 & 3 & 4
\end{bmatrix}$ and the subrank matrix used for \cref{alg:para_rsthosvdkron} is $$\small \begin{bmatrix}
	0 & 3 & 3 & 2 & 2 \\
	3 & 0 & 4 & 2 & 2 \\
	2 & 5 & 0 & 2 & 2 \\
	2 & 2 & 2 & 0 & 2 \\
	2 & 2 & 2 & 2 & 0 
\end{bmatrix}.$$
The relative error achieved by STHOSVD on this dataset is 0.0028, while the relative errors of the randomized algorithms \cref{alg:rsthosvd,alg:para_rsthosvdkron,alg:para_rhosvdkronreuse} are 0.0050, 0.0079, and 0.0079, respectively, which are all within $3\times$ of the deterministic error.

The performance results are shown in \cref{fig:sp_comp}. 
The speedup of the randomized algorithms appears less dramatic compared to the results from the Miranda dataset, which is mainly due to the difference in dimensions of these two datasets. 
Recall that compared to the Miranda dataset, the SP tensor is of higher order but has a smaller size in each mode. 
As a result, the sequential eigendecomposition in the \STHOSVD{} algorithm is no longer as expensive. 
We also see that \cref{alg:para_rsthosvdkron,alg:para_rhosvdkronreuse} are still the best-performing algorithms. 
\Cref{alg:rsthosvd} suffers from slow random number generation, similar to the experiments on the Miranda dataset. 
Finally, forming the factor matrices is no longer a bottleneck for tensors with more cubical dimensions. 
In this case, accelerating the computations applying these factor matrices to derive the core tensor will become a more important issue.

\begin{figure}[!ht]
\renewcommand{\datafile}{data/Miranda_min_breakdowns.dat}
	\tikzexternaldisable
	\tikzsetnextfilename{miranda_comp}
	\begin{subfigure}{.45\textwidth}
		\centering

\resizebox{.9\textwidth}{!}{
\begin{tikzpicture}
  \begin{axis}[
		axis lines*=left,
    ymajorgrids,
    yminorgrids,
    minor tick num=4,
		ymin=0, ymax=15,
		ylabel = {Seconds},
		ylabel near ticks,
		ybar stacked,
		bar width=11pt,
		xtick=data,
		xticklabels ={
			\STHOSVD, \update{parallel \ref{alg:rsthosvd}}, \update{\ref{alg:para_rsthosvdkron} without AAO-mTTM}, \update{\ref{alg:para_rsthosvdkron}}, \update{\ref{alg:para_rhosvdkronreuse} without dimTree}, \update{ \ref{alg:para_rhosvdkronreuse}}
    },
		xticklabel style={font=\small,rotate=45,xshift=-2ex,yshift=-4ex,anchor=base east},
		legend entries={Form $\{\hat{\M{U}}\}$-comp,Form $\{\hat{\M{U}}\}$-comm,Form $\hat{\T{G}}$-comp,Form $\hat{\T{G}}$-comm,Form $\T{G}$, Rand matrix gen, Others},
    legend style={at={(1.1,.8)},anchor=east},
		legend cell align={left},
		cycle list={
			{fill=red!50, draw=red!50},
			{fill=red!30, draw=red!30},
      {fill=blue!50, draw=blue!50},
			{fill=blue!30, draw=blue!30},
      {fill=yellow!70, draw=yellow!70},
			{fill=gray!90, draw=gray!90},
      {fill=gray!50, draw=gray!50}
    },
    after end axis/.append code={
			\path [anchor=base east,yshift=0.5ex]
			(rel axis cs:0.125,0.95) node [yshift=1.9ex, xshift=-0.9ex] {$\vdots$};
			\path [anchor=base east,yshift=0.5ex]
			(rel axis cs:0.125,0.95) node [yshift=4.5ex] {96};}
		]
    \addplot table [x=algs, y=sketch-comp]{\datafile};
    \addplot table [x=algs, y=sketch-comm]{\datafile};
    \addplot table [x=algs, y=truncate-comp]{\datafile};
    \addplot table [x=algs, y=truncate-comm]{\datafile};
    \addplot table [x=algs, y=truncate-core]{\datafile};
    \addplot table [x=algs, y=rng]{\datafile};
    \addplot table [x=algs, y=others]{\datafile};
	\end{axis}
\end{tikzpicture}}
		\caption{Comparing the performance of all algorithms on Miranda dataset \label{fig:miranda_comp}}
	\end{subfigure}
	\tikzexternaldisable
	\tikzsetnextfilename{sp_comp}
	\renewcommand{\datafile}{data/SP_min_breakdowns.dat}
	\begin{subfigure}{.45\textwidth}
		\centering


\resizebox{.9\textwidth}{!}{
\begin{tikzpicture}
  \begin{axis}[
		axis lines*=left,
    ymajorgrids,
    yminorgrids,
    minor tick num=4,
		ymin=0, ymax=20,
		ylabel = {Seconds},
		ylabel near ticks,
		ybar stacked,
		bar width=11pt,
		xtick=data,
		xticklabels ={
			\STHOSVD, \update{parallel \ref{alg:rsthosvd}}, \update{  \ref{alg:para_rsthosvdkron} without AAO-mTTM}, \update{\ref{alg:para_rsthosvdkron}}, \update{\ref{alg:para_rhosvdkronreuse} without dimTree}, \update{ \ref{alg:para_rhosvdkronreuse}}
    },
		xticklabel style={font=\small,rotate=45,xshift=-2ex,yshift=-4ex,anchor=base east},
		legend entries={Form $\{\hat{\M{U}}\}$-comp,Form $\{\hat{\M{U}}\}$-comm,Form $\hat{\T{G}}$-comp,Form $\hat{\T{G}}$-comm,Form $\T{G}$,Rand matrix gen, Others},
    legend style={at={(1.1,.8)},anchor=east},
    legend cell align={left},
    cycle list={
			{fill=red!50, draw=red!50},
			{fill=red!30, draw=red!30},
      {fill=blue!50, draw=blue!50},
			{fill=blue!30, draw=blue!30},
      {fill=yellow!70, draw=yellow!70},
      {fill=gray!90, draw=gray!90},
      {fill=gray!50, draw=gray!50}
    }
		]
    \addplot table [x=algs, y=sketch-comp]{\datafile};
    \addplot table [x=algs, y=sketch-comm]{\datafile};
    \addplot table [x=algs, y=truncate-comp]{\datafile};
    \addplot table [x=algs, y=truncate-comm]{\datafile};
    \addplot table [x=algs, y=truncate-core]{\datafile};
    \addplot table [x=algs, y=rand-matrix-gen]{\datafile};
    \addplot table [x=algs, y=others]{\datafile};
	\end{axis}
\end{tikzpicture}}
		\caption{Comparing the performance of all algorithms on SP dataset \label{fig:sp_comp}}
	\end{subfigure}
	\caption{Performance breakdown of all algorithms on our two real datasets}
\end{figure}

\section{Conclusions and Future Work}\label{sec:conclusion}
We develop new randomized algorithms using a Kronecker product of random matrices that significantly decrease the computational cost in computing a Tucker decomposition. 
By accelerating the sketching step using Kronecker products, we remove the SVD as the dominant computational bottleneck. 
Our algorithms also reduce the number of random entries generated, which, as shown in our experiment results, could result in significant savings in the runtime compared to standard randomized algorithms. 
As the SVD step is no longer the dominant computation, future directions include accelerating the TTM computation, the other dominant portion of computing a Tucker decomposition, perhaps through a one-pass approach similar to \cite{sun2020low}. 
We develop probabilistic error bounds for our algorithms using SRHT matrices as they generalize to Kronecker products better than Gaussian matrices. 
The empirical results comparing Gaussian and SRHT matrices show that the error incurred from using SRHT matrices is not any worse than using Gaussian matrices. 
Our theoretical bounds are pessimistic in comparison, so there is room for improvement in the analysis, another potential future direction.
We implement our new randomized algorithms in parallel, developing a new algorithm that parallelizes the most expensive SVD component. 
Previous approaches such as \cite{choi2018high} parallelize other components, leaving the most expensive part to be computed locally.
Overall, we show in this work that choosing a random matrix that fits the structure of our problem is beneficial. 
The dense Gaussian matrix typically used in RandSVD in particular is not required, and performance is greatly improved by exploiting appropriate structure.

\appendix 
\section{Additional Algorithm: Randomized HOSVD with Kronecker product}
We now include the case of \cref{alg:rhosvdrekron} where we generate an independent set of Kronecker factors for each mode.

\begin{algorithm}[!ht]
\caption{Randomized HOSVD with Kronecker product}
\label{alg:rhosvdkron}
\begin{algorithmic}[1]\footnotesize
\Function{$[\T{G}, \{\M{U}_j\}] =$ \rHOSVDkron}{$\T{X},\V{r},p$}
\State Compute matrix of subranks $\M{S}$
\For{$j = 1:d$}
\State\label{line:omega} Draw $d-1$ random matrices $\M{\Phi}_{j,k} \in \mb{R}^{s_{j,k} \times n_k}$ for $k = 1,\dots, j-1,j+1,\dots, d$
\State\label{line:sketch_hkron} $\T{Y} \leftarrow \T{X} \times_1 \M{\Phi}_{j,1} \times \dots \times_{j-1} \M{\Phi}_{j,j-1} \times_{j+1} \M{\Phi}_{j,j+1} \times \dots \times_d \M{\Phi}_{j,d}$\label{line:rhosvdkron:sketchMultiTTM}
\State\label{line:qr_hkron} Compute thin QR $\Mz{Y}{j} = \M[\hat]{U}_j \M{R}$
\EndFor
\State\label{line:core_hkron}$\hat{\T{G}} = \T{X} \times_1 \M[\hat]{U}_1^\Tra \times \dots \times_d \M[\hat]{U}_d^\Tra$\label{line:rhosvdkron:formCoreMultiTTM}
\State\label{line:truncatecore_hkron} $[\T{G}, \{\M{V}_j\}] = $ STHOSVD$(\hat{\T{G}}, \V{r})$
\State\label{line:truncatefactor} $\M{U}_j = \hat{\M{U}}_j \M{V}_j$ for $j = 1,\dots, d$
\EndFunction
\end{algorithmic}
\end{algorithm}

\section{Detailed Complexity Analysis for \cref{alg:rsthosvdkron,alg:rhosvdrekron}}\label{sec:seq_complexity}
\paragraph{Cost of \cref{alg:rsthosvdkron}}
First consider \cref{alg:rsthosvdkron} (\rSTHOSVDkron). We form the sketch $\T{Y}$ for mode $j$ in \cref{line:sketch_stkron}, involving an intermediate core tensor as well as random matrices $\{\M{\Phi}_k\}_{k\neq j}^{d}$. The intermediate core at iteration $j$ has dimensions 
\vspace{-.1cm}
\begin{equation}\label{eq:coredim}
\underbrace{\ell \times \dots \times \ell}_{j-1} \times \underbrace{n \times \dots \times n}_{d-j+1}, 
\vspace{-.2cm}
\end{equation}
and the matrices $\{\M{\Phi}_k\}_{k=1}^{j-1}$ have dimensions $s \times \ell$, while the matrices $\{\M{\Phi}_k\}_{k=j+1}^{d}$ have dimensions $n \times \ell$. We can compute the multi-TTM with any order of modes, but the most efficient in this particular case is to start with a mode in which the dimension is $n$. Without loss of generality, we thus compute the product in reverse order of modes. Then, we have two separate cases corresponding to the different dimensions of the $\M{\Phi}$ matrices. For modes $d$ to $j+1$, the cost is $\sum_{i=1}^{d-j+1} 2s^i \ell^{j-1} n^{d+2-i-j}$. The cost for modes $j-1$ to $1$ is $\sum_{i=1}^{j-2} 2\ell^{j-i}s^{d-j+i+1}$. 

In \cref{line:core_stkron}, we perform a single TTM to compute the intermediate core.
In mode $j$, we compute a TTM with core $\T{\hat{G}}$ with the same dimensions as in \cref{eq:coredim}, and factor matrix $\M[\hat]{U}_j^\top$ with dimensions $n \times \ell$. The cost of this product is $2\ell^jn^{d-j+1}$. 
Putting all modes together, the dominant costs of \cref{alg:rsthosvdkron} are
$$2 \sum_{j=1}^d \left( \ell^j n^{d-j+1} + \sum_{i=1}^{d-j+1} s^i \ell^{j-1} n^{d+2-i-j} + \sum_{i=1}^{j-2} \ell^{j-1} s^{d-j+i+1}\right).$$

\paragraph{Cost of \cref{alg:rhosvdrekron}}
Now considering \cref{alg:rhosvdrekron}, the sketch is computed in \cref{line:sketch_hkron} of the full tensor $\T{X} \in \mb{R}^{n \times n \times \dots \times n}$ and random matrices $\M{\Phi}_j \in \mb{R}^{s \times n}$ for $j = 1,\dots, d$. For any mode, the cost is $\sum_{i=1}^{d-1} 2s^i n^{d-i+1}$. Then without the dimension tree optimization, the cost for all modes is $d\sum_{i=1}^{d-1} 2s^i n^{d-i+1}$. Incorporating dimension trees as discussed in \cref{sec:dimension_tree} reduces the constant $2d$ to $4$, so the cost of \cref{line:sketch_hkron} in \cref{alg:rhosvdrekron} for all modes is $4\sum_{i=1}^{d-1} s^i n^{d+1-i}$. 

After all the modes are processed, we compute the core via a multi-TTM in \cref{line:core_hkron} in \cref{alg:rhosvdrekron} of tensor $\T{X}\in \mb{R}^{n \times n \times \dots \times n}$ and factor matrices $\M[\hat]{U}_j$ for $j = 1,\dots, d$. The cost of this multi-TTM, computing in any order of modes, is $\sum_{j=1}^d 2\ell^j n^{d+1-j}$. 
Combining all the costs, including $d$ sketch computations and core computations, the complexity of \cref{alg:rhosvdrekron} is
$$4 \sum_{i=1}^{d-1} s^i n^{d+1-i} + 2\sum_{j=1}^d \ell^j n^{d+1-j}.$$
Note that we do not include the core truncation step in \cref{line:truncatecore_stkron} of \cref{alg:rsthosvdkron} or \cref{line:truncatecore_hkron} of \cref{alg:rhosvdrekron} in the dominant costs as the leading order term in this cost is only $\ell^{d+1}$ for all algorithms. We also ignore the cost of the thin QR factorizations in \cref{line:qr_hkron} in \cref{alg:rhosvdrekron} and \cref{line:qr_stkron} in \cref{alg:rsthosvdkron} as that cost is $2d\ell^2n$ in all three algorithms.

\section{Bounds on Core Singular Values}\label{sec:appendix_core}

\begin{lemma}\label{lem:core_svals}
Let tensors $\T{X} \in \mb{R}^{n_1 \times \dots \times n_d}, \T{Y} \in \mb{R}^{\ell_1 \times \dots \times \ell_d}$ and matrices $\M{U}_j \in \mb{R}^{n_j \times \ell_j}$ with orthonormal columns such that $\T{Y} = \T{X} \times_1 \M{U}_1^\top \times \dots \times_d \M{U}_d^\top$. Then 
\begin{equation*}
\sigma_i\left(\Mz{Y}{j}\right) \leq  \sigma_i\left(\Mz{X}{j} \right)
\end{equation*}
for each mode $j$ and for each $i = 1, \dots, \min\{\ell_j, \ell_j^\oslash\}$.
 \end{lemma}
\begin{proof}
First consider an $m\times n$ matrix $\M{B}$ and matrix $\M{A} = \M{B} \M{V}$ for $\M{V}$ with $k$ orthonormal columns. 
We can show that $\sigma_i(\M{A}) \leq \sigma_i(\M{B})$ for any index $i=1,\dots,\min\{m,k\}$. 
Consider first that $\M{A}$ is a submatrix of $\M{B}\bmat{\M{V} & \M{V}_\perp}$, implying that $\M{A}^\top \M{A}$ is a principal submatrix of . Then we can directly apply the Cauchy interlacing theorem \cite[Theorem 4.3.28]{horn2012matrix} to see that $\sigma_i(\M{A}) = \lambda_i(\M{A}^\top \M{A}) \leq \lambda_i(\M{B}^\top \M{B}) = \sigma_i(\M{B})$.

The same argument applies to the transpose of $\M{A}=\M{U}^\top \M{B}$ for $\M{U}$ with orthonormal columns.
Thus, for $\M{U}$ and $\M{V}$ with orthonormal columns and $\M{A}=\M{U}^\top \M{B} \M{V}$, $\sigma_i(\M{A}) \leq \sigma_i(\M{B})$ for $i$ ranging from 1 to the minimum of the numbers of columns of $\M{U}$ and $\M{V}$.

Now, let $\M{A} = \Mz{Y}{j}$, recalling that $\Mz{Y}{j} = \M{U}_j \Mz{X}{j} (  \M{U}_d \otimes \M{U}_{d-1} \otimes \dots \otimes \M{U}_{j+1} \otimes \M{U}_{j-1} \otimes \dots \otimes \M{U}_1 )$. 
Recall that the Kronecker product of matrices with orthonormal columns also has orthonormal columns.
Thus, $\sigma_i(\Mz{Y}{j}) \leq  \sigma_i(\Mz{X}{j})$ for $i=1,\dots,\min\{\ell_j,\ell_j^{\oslash}\}$ and $j=1,\dots,d$.
\end{proof}

\section{Proof of \cref{lem:omega1}}\label{sec:appendix_omega1}

\begin{proof}
Let $\M{W} = \M{V}^\top_1 \in \mb{R}^{r \times n}$ for notational simplicity, and recall that $\M{\Omega} = \M{D}(\M{H}_1 \otimes \dots \otimes \M{H}_q)$ is the Kronecker product of independent SRHT matrices where $\M{D} \in \mb{R}^{n \times n}$ and $\M{H}_j \in \mb{R}^{n_j \times s_j}$ for every $j$, given $n = \prod_{j=1}^q n_j$ and $\ell = \prod_{j=1}^q s_j$. Define $\M{G} = (\M{W \Omega})(\M{W \Omega})^\top \in \mb{R}^{r \times r}$. Note that $\M{W \Omega}$ is equivalent to $\M{\Omega}_1 \in \mb{R}^{r \times \ell}$.

Our approach will focus on the elementwise representation of $\M{G}$ and will be composed of 3 main steps: first, we will express the elements of $\M{G}$ in terms of two summands $\M{M}$ and $\M{N}$ that can be bounded more easily; second, we will obtain a deterministic bound for $\|\M{M}\|_2$ and then bound $\mb{E}[N_{ij}^2]$, which is the bulk of the proof; and third, we use our result from the previous step in conjunction with Markov's inequality to obtain a concentration inequality for $\|\M{N}\|_2$. 
Combining all these pieces will then give us the desired bound. 
Note that our three main steps follow the approach of \cite{matrixproof}, which analyzes the case where $\M{\Omega}$ is a single SRHT matrix. 

Define $\M{H} = \M{H}_1 \otimes \M{H}_2 \otimes \dots \otimes \M{H}_q$, letting the Kronecker product of subsampled Hadamard matrices be $\M{H}$ for ease of notation. 
Then we have another way to express $\M{G} $ as
\vspace{-.2cm}
\begin{equation}\label{eq:expandedG}
	\M{G} = (\M{W \Omega})(\M{W \Omega})^\top = \M{WDHH}^\top \M{DW}^\top = \M{WDFDW}^\top,
	\vspace{-.1cm}
\end{equation}
letting $\M{F} = \M{HH}^\top$. There are some important properties of $\M{D}$ and $\M{F}$ we will need, which we now explore. 
The diagonal matrix $\M{D}$ has i.i.d.\ entries drawn from the Rademacher distribution so that $\mb{E}D_a = 0$ and $D_a^2 = 1$ for $a = 1,\dots,n$.

Each element of $\M{F}$ can be written as a product of the entries of individual Gram matrices $\M{H}_j\M{H}_j^\top$. Specifically,
$F_{ab} = (\M{H}_1\M{H}_1^\top)_{i_1 j_1}(\M{H}_2\M{H}_2^\top)_{i_2 j_2}\cdots (\M{H}_q\M{H}_q^\top)_{i_q j_q}$, 
with $a$ the linear index with respect to $i_1,\dots,i_q$ and $b$ the linear index with respect to $j_1,\dots, j_q$. This representation allows us to break dependent expressions down into their independent parts, as each $\M{H}_i$ is independent from $\M{H}_j$ when $i \neq j$. We can then write the expectation of $F_{ab}$ as 
\begin{equation}\label{eq:expF}
	\mb{E} F_{ab} = \mb{E} (\M{H}_1\M{H}_1^\top)_{i_1 j_1} \mb{E}(\M{H}_2\M{H}_2^\top)_{i_2 j_2}\cdots \mb{E}(\M{H}_q\M{H}_q^\top)_{i_q j_q}.
\end{equation}
From \cite[Eqn. 38]{matrixproof}, we have that $\mb{E} (\M{H}_k\M{H}_k^\top)_{i_k j_k} = 0$ for $i_k \neq j_k$. If $a \neq b$, then $i_k \neq j_k$ for at least one $k$. Combining this and \cref{eq:expF}, we can say that $\mb{E} F_{ab} = 0$ for $a \neq b$. Now consider the case where $a = b$. From \cite[Eqn. 37]{matrixproof}, we have that $(\M{H}_k\M{H}_k^\top)_{i_k i_k} = s_k/n_k$ deterministically. Then, 
\vspace{-.2cm}
\begin{equation}\label{eq:Faa}
	F_{aa} = \prod_{k=1}^q \frac{s_k}{n_k} = \frac{\ell}{n}.
	\vspace{-.1cm}
\end{equation} 
The last piece we will need is $\mb{E} [F_{ab}^2]$. 
From \cite[Eqn. 39]{matrixproof}, $\mb{E} [(\M{H}_k\M{H}_k^\top)_{i_k j_k}^2] = s_k/n_k^2$ for $i_k \neq j_k$, and $\mb{E}[(\M{H}_k\M{H}_k^\top)_{i_k i_k}^2] = s_k^2/n_k^2$ from above.
If $a \neq b$, then $i_{k'} \neq j_{k'}$ for at least one $k'$. Combining this and \cref{eq:expF}, 
\begin{equation}\label{eq:expF2}
	\mb{E}[F_{ab}^2] \leq \frac{s_{k'}}{n_{k'}^2}\prod_{\substack{k =1 \\ k\neq k'}}^q \frac{s_{k}^2}{n_{k}^2}=\frac{\ell^2}{s_{k'}n^2} \leq \frac{\ell^2}{\min_k\{s_k\}n^2} = \frac{\ell^2}{s_k^* n^2},
\end{equation}
letting $s_k^* = \min_k\{s_k\}$.

With all these pieces in mind, we begin the main steps of the proof. 
We start with an elementwise representation of $\M{G}$, using the form in \cref{eq:expandedG}, with
	\vspace{-.2cm}
	\begin{align*}
		G_{ij} &= \sum_{a,b=1}^n W_{ia} D_a F_{ab} D_b W_{jb} \\ &= \sum_{a=1}^n W_{ia}W_{ja} D_a^2 F_{aa} + \sum_{a=1}^n W_{ia} D_a \sum_{\substack{b=1 \\ b\neq a}}^n W_{jb} D_b F_{ab},
		\vspace{-.2cm}
	\end{align*}
	for $1\leq i,j \leq r$.
	Consider the first term, where we have isolated the case $a=b$. 
	As the rows of $\M{W}$ are orthonormal, $D_a^2 = 1$, and $F_{aa} = \ell/n$, $G_{ij}$ can be written as $G_{ij} = \frac{\ell}{n}\delta_{ij} + \sum_{a=1}^n W_{ia} D_a \sum_{\substack{b=1 \\ b\neq a}}^n W_{jb} D_b F_{ab}$,
	where $\delta_{ij}$ is the Kronecker delta which is 1 when $i = j$ and 0 otherwise.
	Defining $\M{M} \in \mb{R}^{r \times r}$ to be the diagonal matrix with $\ell/n$ as each diagonal entry, and letting $\M{N} \in \mb{R}^{r \times r}$ be the matrix with entries 
	$N_{ij} = \sum_{a=1}^n W_{ia} D_a \sum_{\substack{b=1 \\ b\neq a}}^n W_{jb} D_b F_{ab}$,
	we have $\M{G} = \M{M}+\M{N}$. 
	
	 As $\M{M}$ is a diagonal matrix, we can easily see $\|\M{M}\|_2 = \ell/n$. Bounding $\|\M{N}\|$ is trickier; our approach will be to use the fact $\mb{E}\|\M{N}\|_2^2 \leq \mb{E}\|\M{N}\|_F^2 = \sum_{i,j =1}^r \mb{E}[N_{ij}^2]$ and first bound $\mb{E}[N_{ij}^2]$. We start by expanding the product
	\begin{equation}
	\label{eq:nijsq}
	\begin{aligned}
		&\mb{E}[N_{ij}^2] = \mb{E} \left( \sum_{a=1}^n W_{ia} D_a \sum_{\substack{b = 1 \\ b \neq a}}^n W_{jb} D_b F_{ab} \right) \left(\sum_{c=1}^n W_{ic} D_c  \sum_{\substack{f = 1 \\ f\neq c}}^n W_{jf} D_f F_{cf} \right) \\
		&= \mb{E} \sum_{a =1}^n W_{ia}^2 \left( \sum_{\substack{b = 1 \\ b \neq a}}^n W_{jb} D_b F_{ab}\right)^2 + \mb{E}  \sum_{\substack{ a,c = 1 \\ a \neq c}}^n W_{ia} W_{ic} D_a D_c \sum_{\substack{b=1 \\ b \neq a}}^n W_{jb} D_b F_{ab} \sum_{\substack{f = 1 \\ f \neq c}}^n W_{jf} D_f F_{cf},
	\end{aligned}
	\end{equation}
	which we have separated into the terms where $a = c$ and $a \neq c$. 
	Consider the first term of \cref{eq:nijsq}, where $a=c$. 
	As $\M{W}$ is a deterministic matrix, the expectation only affects the terms with $D_b$ and $F_{ab}$, so we have
	\vspace{-.3cm}
	\begin{equation}\label{eq:a=c}
		\mb{E} \sum_{a =1}^n W_{ia}^2 \left( \sum_{\substack{b=1 \\ b \neq a}}^n W_{jb} D_b F_{ab}\right)^2 = \sum_{a =1}^n W_{ia}^2 \mb{E} \left( \sum_{\substack{b=1 \\ b \neq a}}^n W_{jb} D_b F_{ab}\right)^2 .
	\end{equation}
	\vspace{-.4cm}
	
	\noindent Now consider the expectation portion of \cref{eq:a=c} for a fixed $1\leq a\leq n$. We can expand this product and distribute the expectation as
	\vspace{-.2cm}
		\begin{equation*}
		\begin{aligned}
			&\mb{E} \left( \sum_{\substack{b=1 \\ b \neq a}}^n W_{jb} D_b F_{ab}\right)^2 = \mb{E} \sum_{\substack{b=1 \\b\neq a}}^n W_{jb} D_b F_{ab} \sum_{\substack{f=1 \\f \neq a}}^n W_{jf} D_f F_{af} \\
			&= \sum_{\substack{b,f=1 \\b,f \neq a}}^n W_{jb}W_{jf} \mb{E} [D_b D_f F_{ab} F_{af}] = \sum_{\substack{b=1 \\b \neq a}}^n W_{jb}^2 \mb{E}[F_{ab}^2] + \sum_{\substack{b,f=1\\b,f\neq a \\ b\neq f}}^n W_{jb} W_{jf} \mb{E} [D_b D_f F_{ab} F_{af}]	
	\end{aligned}
	\end{equation*}
	where the last equality separates terms into where $b=f$ and where $b \neq f$, respectively. From \cref{eq:expF2} and as the rows of $\M{W}$ are normalized, we can write the term where $b=f$ as 
		$\sum_{\substack{b=1 \\ b \neq a}}^n W_{jb}^2 \mb{E}[F_{ab}^2] \leq \sum_{b=1}^n W_{jb}^2 \frac{\ell^2}{s_k^* n^2} = \frac{\ell^2}{s_k^* n^2}$.
	The term where $b \neq f$ can be written as
		$\sum_{\substack{b,f=1 \\ b,f\neq a \\ b\neq f}}^n W_{jb} W_{jf} \mb{E} [D_b D_f F_{ab} F_{af}] = \sum_{\substack{b,f=1 \\ b,f\neq a \\ b\neq f}}^n W_{jb} W_{jf} \mb{E}D_b \mb{E}D_f \mb{E} [F_{ab} F_{af}] = 0$
	from $\mb{E}D_b = 0$.
	These two results can be combined into the expectation portion of \cref{eq:a=c} to obtain
		$\sum_{a =1}^n W_{ia}^2 \mb{E} \left( \sum_{\substack{b=1 \\ b \neq a}}^n W_{jb} D_b F_{ab}\right)^2 \leq \sum_{a=1}^n W_{ia}^2 \frac{\ell^2}{s_k^* n^2} = \frac{\ell^2}{s_k^* n^2}$.
	
	We now focus on the second term of \cref{eq:nijsq}, where $a \neq c$. We split up both of the last two sums to extract the terms where $b=c$ and where $f = a$, giving
	\begin{equation*}
	\small
	\begin{aligned}
		&\mb{E}\sum_{\substack{a,c=1 \\ a \neq c}}^n W_{ia} W_{ic} D_a D_c \sum_{\substack{b=1 \\ b \neq a}}^n W_{jb} D_b F_{ab} \sum_{\substack{f=1 \\ f \neq c}}^n W_{jf} D_f F_{cf} = \\
		&\mb{E} \sum_{\substack{a,c = 1 \\ a \neq c}}^n W_{ia} W_{ic} D_a D_c \left( W_{jc} D_c F_{ac} {+} \sum_{\substack{b = 1 \\ a\neq b \neq c}}^n W_{jb} D_b F_{ab}\right) \left( W_{ja} D_a F_{ca} {+} \sum_{\substack{f=1 \\ a\neq f \neq c}}^n W_{jf} D_f F_{cf} \right).
	\end{aligned}
	\end{equation*}
	We expand this product into four terms we can bound separately, as
	\vspace{-.2cm}
	\begin{subequations}
	\small
	\begin{align}
		\mb{E} \sum_{\substack{a,c=1 \\ a \neq c}}^n W_{ia} W_{ic} D_a D_c &\left( W_{jc} D_c F_{ac} {+} \sum_{\substack{b=1 \\ a\neq b \neq c}}^n W_{jb} D_b F_{ab}\right) \left( W_{ja} D_a F_{ca} {+}  \sum_{\substack{f=1 \\ a\neq f \neq c}}^n W_{jf} D_f F_{cf} \right) \nonumber \\
		&=\mb{E} \sum_{\substack{a,c = 1 \\ a \neq c}}^n W_{ia} W_{ic} D_a D_c W_{jc} W_{ja} D_c D_a F_{ac} F_{ca} \label{eq:4parts1}\\
		&+\mb{E} \sum_{\substack{a,c = 1 \\ a \neq c}}^n W_{ia} W_{ic} D_a D_c \sum_{\substack{b = 1 \\ a\neq b \neq c}}^n W_{jb} D_b F_{ab} \sum_{\substack{f = 1 \\ a\neq f \neq c}}^n W_{jf} D_f F_{cf} \label{eq:4parts2}\\
		&+\mb{E} \sum_{\substack{a,c = 1 \\ a \neq c}}^n W_{ia} W_{ic} D_a D_c W_{jc} D_c F_{ac} \sum_{\substack{f = 1 \\ a\neq f \neq c}}^n W_{jf} D_f F_{cf} \label{eq:4parts3}\\
		&+\mb{E} \sum_{\substack{a,c = 1 \\ a \neq c}}^n W_{ia} W_{ic} D_a D_c W_{ja} D_a F_{ca} \sum_{\substack{b = 1 \\ a\neq b \neq c}}^n W_{jb} D_b F_{ab} \label{eq:4parts4}.
	\end{align}
	\end{subequations}
Consider \cref{eq:4parts1}. As $D_k^2 = 1$ and $\M{F}$ is symmetric, the expectation is just affected by $F_{ac}^2$. 
	We then have
	\begin{equation*}\label{eq:part1_exp}
	\begin{aligned}
		\mb{E} &\sum_{\substack{a,c = 1 \\ a \neq c}}^n W_{ia} W_{ic} D_a D_c W_{jc} W_{ja} D_c D_a F_{ac} F_{ca} = \sum_{\substack{a,c = 1 \\ a \neq c}}^n W_{ia} W_{ic} W_{jc} W_{ja} \mb{E} [F_{ac}^2] \\
		&\leq \frac{\ell^2}{s_k^* n^2} \sum_{\substack{a,c = 1 \\ a \neq c}}^n W_{ia} W_{ic} W_{jc} W_{ja} \leq \frac{\ell^2}{s_k^* n^2} \sum_{a,c = 1}^n W_{ia} W_{ic} W_{jc} W_{ja} \\ 
		&= \frac{\ell^2}{s_k^* n^2} \left( \sum_{a=1}^n W_{ia} W_{ja} \right)^2 = \frac{\ell^2}{s_k^* n^2}\left[ \M{WW}^\top \right]_{ij}^2 = \delta_{ij} \frac{\ell^2}{s_k^* n^2}.
	\end{aligned}
	\vspace{-.15cm}
	\end{equation*}
	In all three of the remaining parts, \cref{eq:4parts2}, \cref{eq:4parts3}, and \cref{eq:4parts4}, distributing the expectation to the independent random components gives us the expectation of the product of independent Rademacher entries. This means all three of these parts are equal to 0.

	With all these pieces, we now have 
		$\mb{E} [N_{ij}^2] \leq  \frac{\ell^2}{s_k^* n^2} + \delta_{ij}\frac{\ell^2}{s_k^* n^2}$.
	With the bound on $\mb{E} [N_{ij}^2]$, we can now bound the expectation of the norm of $\M{N}$:
		$\mb{E} \|\M{N} \|_2^2 \leq \mb{E} \|\M{N} \|_F^2 =  \mb{E} \sum_{i,j=1}^r N_{ij}^2 \leq \frac{\ell^2}{s_k^* n^2} \sum_{i,j=1}^r (1 + \delta_{ij}) = (r^2+r) \frac{\ell^2}{s_k^* n^2}$.
	Then, using Markov's inequality, $\| \M{N} \|_2^2 \leq \frac{\beta (r^2+r) \ell^2}{s_k^* n^2}$ with probability at least $1 - \frac{1}{\beta^2}$.

Now consider the term $\| \M{G}^\dagger \|_2$ . Recalling that $\M{G} = \M{M}+\M{N}$, we can express this instead as $\M{G} = (\M{I} + \M{NM}^{-1} ) \M{M}$.
Then we can write $\M{G}^\dagger = \M{M}^{-1}(\M{I} + \M{NM}^{-1})^\dagger$. Taking norms, we have
	$\| \M{G}^\dagger \|_2 \leq \| \M{M}^{-1} \|_2 \| (\M{I} + \M{NM}^{-1})^\dagger \|_2 \leq \frac{n}{\ell} \sum_{k=0}^\infty \| \M{NM}^{-1} \|_2^k$,
where we use the Taylor expansion $ (\M{I} + \M{NM}^{-1})^\dagger = \sum_{k=0}^\infty ( -\M{NM}^{-1} )^k$ (see \cite[Corollary 5.6.16]{horn2012matrix} for more details).
We can then write
	$\| \M{G}^\dagger \|_2 \leq \frac{n}{\ell} \sum_{k=0}^\infty \left( \| \M{N}\|_2 \| \M{M}^{-1}\|_2 \right)^k. $
We now consider $\|\M{N}\|_2 \| \M{M}^{-1}\|_2$ before the entire expression. As $\| \M{M}^{-1} \|_2 = n/\ell$,
	$\|\M{N}\|_2 \| \M{M}^{-1}\|_2 \leq \sqrt{\frac{\beta(r^2+r)}{s_k^*}}\leq 1 - \frac{1}{\alpha}$,
with probability at least $1-\frac{1}{\beta^2}$, where the last inequality comes from \cref{eq:alphabeta}. Then,
	$\| \M{G}^\dagger \|_2 \leq \frac{n}{\ell} \sum_{k=0}^\infty \left(1-\frac{1}{\alpha} \right)^k = \frac{n\alpha}{\ell}$.
Our smallest singular value is then
	$\frac{1}{\sigma_{\text{min}}^2(\M{W} \M{\Omega})} = \| \M{G}^\dagger \|_2 \leq \frac{\alpha n}{\ell}$,
with probability at least $1-\frac{1}{\beta^2}$.
Taking the square root, we obtain the desired result.
\end{proof}

\section{STHOSVD Error Analysis}\label{app:sthosvd_proof} 
\begin{theorem}
	Let $\T{T} = [\T{G}; \M{U}_1, \dots, \M{U}_d]$ be the approximation given by \cref{alg:rsthosvdkron} to $\T{X} \in \mb{R}^{n_1 \times \dots \times n_d}$ with target rank $\V{r} = (r_1,\dots,r_d)$ and oversampling parameter $p$. Let $\ell_j = r_j+p$ for $j = 1,\dots, d$. Then for sequences $\{\alpha_j\}_{j=1}^d$ and $\{\beta_j\}_{j=1}^d$ satisfying \cref{eq:alphabeta}, the following bound holds with probability at least $1- \sum_{j=1}^d \frac{1}{\beta_j^2}$,
	\begin{equation*}
		\| \T{X}- \T{T} \| \leq \left( \sum_{j=1}^d \left( 1 + \frac{\alpha_j n_j^\oslash}{\ell_j} \right) \sum_{i=r_j+1} \sigma_i^2 ( \Mz{X}{j} ) \right)^{1/2} + \left( \sum_{j=1}^d \sum_{i=r_j+1}^{\ell_j} \sigma_i^2 (\Mz{X}{j}) \right)^{1/2}.
	\end{equation*}
\end{theorem}
\begin{proof}
	The quantity $\varepsilon_{\text{core}}$ is bounded from \cref{eq:core_svals}, so we only consider $\varepsilon_{\text{rand}}$. Let $\T[\hat]{G}^{(j)} = \T{X} \times_1 \M[\hat]{U}_1^\top \times_2 \dots \times_j \M[\hat]{U}_j^\top$ be the partially truncated core tensor after processing mode $j$, and let $\hat{\T{T}}^{(j)} = \T[\hat]{G}^{(j)} \times_{1} \M[\hat]{U}_1 \times_2 \dots \times_j \M[\hat]{U}_j$ the resulting partial approximation to $\T{X}$. From the first equality of \cref{lem:proj}, we have
	\begin{equation*}
	\begin{aligned}
		\| \T{X} -\T[\hat]{T} \|^2 &= \| \T{X} - \T{X} \times_{1} \M[\hat]{U}_1 \M[\hat]{U}_1^\top \times_2 \dots \times_d \M[\hat]{U}_d \M[\hat]{U}_d^\top \|^2 \\
		&= \sum_{j=1}^d \| \T[\hat]{T}^{(j-1)} - \T[\hat]{T}^{(j)} \|^2 \\
		&= \sum_{j=1}^d \| \T[\hat]{G}^{(j-1)} \times_{1} \M[\hat]{U}_1 \times_2 \dots \times_{j-1} \M[\hat]{U}_{j-1} \times_j (\M{I}-\M[\hat]{U}_j \M[\hat]{U}_j^\top ) \|^2.
	\end{aligned}
	\end{equation*}
Consider the $j$-th term in the sum. We unfold to obtain
\begin{equation*}
\begin{aligned}
	 \| \hat{\T{T}}^{(j-1)} - \hat{\T{T}}^{(j)} \|^2 &= \| (\M{I} - \M[\hat]{U}_j \M[\hat]{U}_j^\top) \Mz[\hat]{G}{j}^{(j-1)} (\underbrace{\M{I} \otimes \dots \otimes \M{I}}_{d-j} \otimes \M[\hat]{U}_{j-1} \otimes \dots \otimes \M[\hat]{U}_1)^\top \|_F^2 \\ 
	&\leq \| (\M{I}-\M[\hat]{U}_j\M[\hat]{U}_j^\top) \Mz[\hat]{G}{j}^{(j-1)} \|^2,
\end{aligned}
\end{equation*}
as the columns of factor matrices $\M[\hat]{U}_j$ are orthonormal for all $j=1,\dots,d$.
We can then apply \cref{thm:matrixbound} to $\| (\M{I}-\M[\hat]{U}_j\M[\hat]{U}_j^\top) \Mz[\hat]{G}{j}^{(j-1)} \|$, giving
\begin{equation*}
	\| (\M{I}-\M[\hat]{U}_j\M[\hat]{U}_j^\top) \Mz[\hat]{G}{j}^{(j-1)} \|^2 \leq  \left(1+ \frac{\alpha_j n_j^\oslash}{\ell_j} \right)\sum_{i=r_j+1}^{n_j} \sigma_i^2(\Mz[\hat]{G}{j}^{(j-1)}),
\end{equation*}
except with probability at most $\frac{1}{\beta_j^2}$.
As $\T[\hat]{G}^{(j-1)}$ is a random quantity, we must bound this by the singular values of $\T{X}$. We can directly apply \cref{lem:core_svals} to $\T[\hat]{G}^{(j-1)} = \T{X} \times_1 \M[\hat]{U}_1^\top \times_2 \dots \times_j \M[\hat]{U}_j^\top$ relating the singular values of $\T[\hat]{G}^{(j-1)}$ to those of $\T{X}$. Thus,
 \begin{equation*}
	\| \T[\hat]{T}^{(j-1)} - \T[\hat]{T}^{(j)} \|^2 \leq \left(1+ \frac{\alpha_j n_j^\oslash}{r_j} \right)\sum_{i=\ell_j+1}^{n_j}\sigma_i^2(\Mz{X}{j}),
\end{equation*}
except with probability at most $\frac{1}{\beta_j^2}$. The failure probability for the entire sum is the union of all $d$ failure probabilities for each mode, bounded above by the sum of those probabilities by the union bound. Then,
\begin{equation*}
\begin{aligned}
	\| \T{X} - \T[\hat]{T} \|^2 &= \sum_{j=1}^d \| \T[\hat]{T}^{(j-1)} - \T[\hat]{T}^{(j)} \|^2 \\
	&\leq \sum_{j=1}^d \left(1+ \frac{\alpha_j n_j^\oslash}{\ell_j} \right)\sum_{i=r_j+1}^{n_j}\sigma_i^2(\Mz{X}{j}),
\end{aligned}
\end{equation*}
except with probability at most $\sum_{j=1}^d \frac{1}{\beta_j^2}$. Taking square roots gives $\varepsilon_{\text{rand}}$. Combining $\varepsilon_{\text{rand}}$ and $\varepsilon_{\text{core}}$, the total error is then
\begin{equation*}
\| \T{X}- \T{T} \| \leq \left( \sum_{j=1}^d \left( 1 + \frac{\alpha_j n_j^\oslash}{\ell_j} \right) \sum_{i=r_j+1}^{n_j} \sigma_i^2 ( \Mz{X}{j} ) \right)^{1/2} + \left( \sum_{j=1}^d \sum_{i=r_j+1}^{\ell_j} \sigma_i^2 (\Mz{X}{j}) \right)^{1/2}.
\end{equation*}
\end{proof}

\section{Accuracy} \label{sec:sm_accuracy}
We construct a different synthetic tensor for our second accuracy experiment. For this case, we generate $\T{X} \in \mb{R}^{500 \times 500 \times 500}$ to be a random 3-way tensor with true rank $(50,50,50)$ and added $10^{-4}$ relative Gaussian noise. We use $p=5$ for our oversampling parameter. 
In \cref{fig:rankplot}, we plot the relative error resulting from our algorithms with increasing target rank. 
We compare our algorithms to the deterministic STHOSVD as well as the randomized algorithm using one large random matrix. 
In the left plot, we show the relative error from all algorithms using Gaussian random matrices, and in the right plot, we show the relative error from our suggested algorithms using SRHT random matrices. 
In both plots, the relative error for all algorithms is large until we reach the true rank. 
At that point, the error drops close to the noise level, but the randomized algorithms have a higher relative error than the deterministic.
All the randomized algorithms are comparable to each other, with errors exceeding the deterministic algorithm by factors of $2$ to $7\times$.

\begin{figure}[!ht]
\centering
\tikzexternaldisable
\tikzsetnextfilename{relative_err_target_rank_gaus}
\resizebox{.45\textwidth}{!}{
\begin{tikzpicture}
    \begin{axis}[
    	title = {Gaussian Random Matrix},  
    	xlabel = {Target Rank},
    	ylabel = {Relative Error},
    	ymin = .00001, ymax = 1,
	ymode=log,
	xmin = 0, xmax = 100,
	legend entries = {\cref{alg:sthosvd}, \cref{alg:rhosvd}, \cref{alg:rsthosvd}, \cref{alg:rhosvdkron}, \cref{alg:rsthosvdkron}, \cref{alg:rhosvdrekron}},
	legend style={at={(0,-.25)},anchor=south west},
	xscale = .92, yscale = .8
    	]
    	\addplot[mark=none,color=blue,line width=1pt] table {figs/deter.dat};
	\addplot[mark=none,color=red,style=dashed,line width=1pt]  table {figs/simpG.dat};
	\addplot[mark=none,color=violet,style=dashed,line width=1pt] table {figs/simpS-G.dat};
	\addplot[mark=none,color=black,style=dashdotted,line width=1pt]  table {figs/kronG.dat};
	\addplot[mark=none,color=orange,style=dashdotted,line width=1pt] table {figs/kronS-G.dat};
	\addplot[mark=none,color=teal,style=dotted,line width=1.3pt]  table {figs/rekronG.dat};
    \end{axis}
\end{tikzpicture}
}
\tikzexternaldisable
\tikzsetnextfilename{relative_err_target_rank_srht}
\resizebox{.45\textwidth}{!}{
\begin{tikzpicture}
    \begin{axis}[
    	title = {SRHT Random Matrix},  
    	xlabel = {Target Rank},
    	ylabel = {Relative Error},
    	ymin = .00001, ymax = 1,
	ymode=log,
	xmin = 0, xmax = 100,
	legend entries = {\cref{alg:sthosvd}, \cref{alg:rsthosvdkron}, \cref{alg:rhosvdrekron}},
	legend style={at={(0,-.25)},anchor=south west},
	xscale = .92, yscale = .8
    	]
    	\addplot[mark=none,color=blue,line width=1pt] table {figs/deter.dat};
	\addplot[mark=none,color=orange,style=dashdotted,line width=1pt] table {figs/kronS-S.dat};
	\addplot[mark=none,color=teal,style=dotted,line width=1.3pt] table {figs/rekronS.dat};
    \end{axis}
\end{tikzpicture}}
\caption{Relative error of our randomized algorithms with Gaussian (left) and SRHT (right) random matrices as the target rank $(r,r,r)$ increases on the synthetic tensor with true rank $(50,50,50)$ and $10^{-4}$ relative Gaussian noise.}
\label{fig:rankplot}
\end{figure}
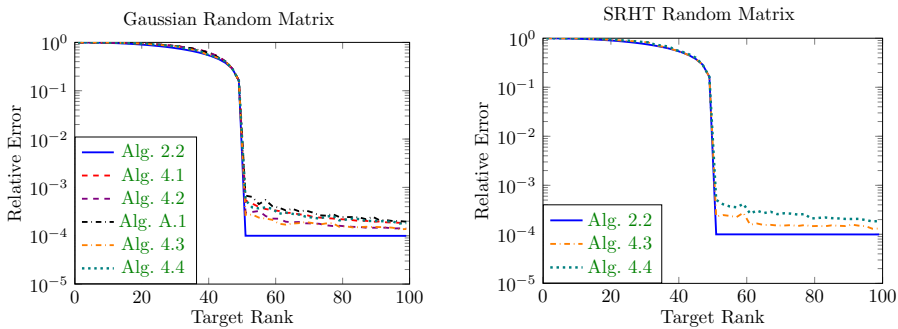

\section{Computation and Communication Cost Comparison with Previous Work}\label{sec:sm_cost}
In this section, we compare the leading terms of the per-processor computation and communication cost of performing the Tucker decomposition using the four algorithms listed in \cref{tab:parallel_cost_comp}. We will go over the expressions in each cell row by row. To reiterate notations, the input tensor is a $d$-way tensor $\T{X} \in \mb{R}^{n \times \dots \times n}$ with rank $(r,r,\dots, r)$ and the size of each mode of the $d$-way processor tensor is $q$. We also assume that $s < r < l \ll n$ where $s$ is the subrank for each mode and $\ell = r+p$ with $p$ the oversampling parameter. 

For the STHOSVD algorithm analyzed in \cite[Section 7]{BKK20}, the computation cost of forming the factor matrices includes computing the Gram matrices of the tensor unfoldings and the eigendecomposition of those Gram matrices. 
Computing the Gram matrices dominates the cost in this scenario. 
Furthermore, although there are $d$ Gram matrices, one for each mode, the latter $d-1$ Gram matrices are much cheaper to compute than the first; this reduction is due to the truncation occurring after processing each mode of the tensor. 
Therefore, when comparing the computation and bandwidth cost, we count only the first Gram matrix computation. 
For computation cost, the leading term is $\frac{n^{d+1}}{P}$, resulting from multiplying the local tensor unfolding of size $\frac{n}{q}\times (\frac{n}{q})^{d-1}$ with its own transpose. 
Leveraging the symmetry, the constant prefactor is 1. 
Considering bandwidth cost, computing the first Gram matrix is dominated by an all-to-all on the local input tensor which has $\frac{n^d}{P}$ elements. 
Since latency cost is not affected by truncation of the tensor, we count all $d$ Gram matrix computations. 
The point-to-point all-to-all collective is performed over the processor fiber, so the cost is $\mathcal{O}(q)=\mathcal{O}(P^{1/d})$ per mode. 

Forming the core tensor requires $d$ TTMs. 
Again, due to truncation, we count only the first TTM as the leading computational cost. 
In the first TTM, we multiply the local factor matrix of size $r\times \frac{n}{q}$ with the local tensor unfolding of size $\frac{n}{q} \times (\frac{n}{q})^{d-1}$, which leads us to the term $2\frac{rn^d}{P}$. 
The communication involves one reduce-scatter on the product of the local factor matrix and the local input tensor, leading us to the term $\beta\mathcal{O}(r (\frac{n}{q})^{d-1}) = \beta\mathcal{O}(\frac{rn^{d-1}}{P^{1-1/d}})$. 
For latency cost, there are $d$ reduce-scatters, one for each mode, over the processor fiber, yielding $\mathcal{O}(d\log q) = \mathcal{O}(\log P)$ messages.

For the algorithm proposed by Choi et al. in \cite{choi2018high}, the computational cost of forming the factor matrices is the same as stated above since they also compute the SVD through local eigendecomposition of the Gram matrices. 
The same applies to the computation cost of forming the core tensor.

The communication differs slightly. 
In this algorithm, for every other mode, an all-to-all among all the processors is performed to redistribute the global tensor unfolding into a 1D block-column fashion, which contributes $\alpha\mathcal{O}(dP)$ to the latency cost and $\beta\mathcal{O}(\frac{n^d}{P})$ to the bandwidth cost. 
Again we only count the first all-to-all for bandwidth due to truncation of the tensor in subsequent modes. 
Computing the Gram matrices requires an additional $d$ all-reduces among all the processors, which contributes $\alpha\mathcal{O}(d \log P)$ and $\beta\mathcal{O}(dn^2)$ to the latency and bandwidth cost respectively, but these are lower order terms. 
Forming the core tensor does not require additional communication given the redistributions described above. 

In \cref{alg:para_rsthosvdkron}, the computation cost of forming the factor matrices is dominated by the cost of forming the 1st factor matrix due to truncation, which is further dominated by the cost of multiplying the first random matrix of size $s\times \frac{n}{P^{1/d}}$ with the local tensor unfolding. 
Thus the leading term arrives at $2s\frac{n^d}{P}$, which has leading order term $2\frac{r^{1/(d-1)}n^d}{P}$. 
Using the all-at-once multi-TTM algorithm, the communication is that of a reduction of only the final result, which has dimensions $(n/P^{1/d})\times s \times \cdots \times s$, for a cost of $\alpha\mathcal{O}(\log P)+\beta\mathcal{O}(s^{d-1}n/P^{1/d})$.
This cost is consistent across modes.
Forming the core in \cref{alg:para_rsthosvdkron} has similar computation and communication cost to that of the STHOSVD algorithm except for the sizes of the factor matrices are $l\times \frac{n}{P^{1/d}}$, but the leading order term in the costs stays the same.
This assumes the in-sequence Multi-TTM algorithm is used, as in our implementation.

In \cref{alg:para_rhosvdkronreuse}, the computation cost of forming the factor matrices is double that of \cref{alg:rsthosvdkron} because the cost is dominated by the first two internal nodes of the dimension tree, each of which involves multiplying one random matrix with the local input tensor. 
The communication cost of forming the factor matrices is also similar to that of \cref{alg:para_rsthosvdkron}. 
For \cref{alg:para_rhosvdkronreuse}, the leading terms of the computation and communication cost of forming the core tensor are the same as that of \Cref{alg:para_rsthosvdkron}.

\section{Additional Experiments}\label{sec:sm_experiments}
\subsection{Miranda and SP visualizations}\label{sec:sm_visuals}
We show a three-dimensional visualization of a portion of the original Miranda tensor in \cref{fig:miranda3d}.

\begin{figure}[!ht]
\centering
\includegraphics[scale=.4]{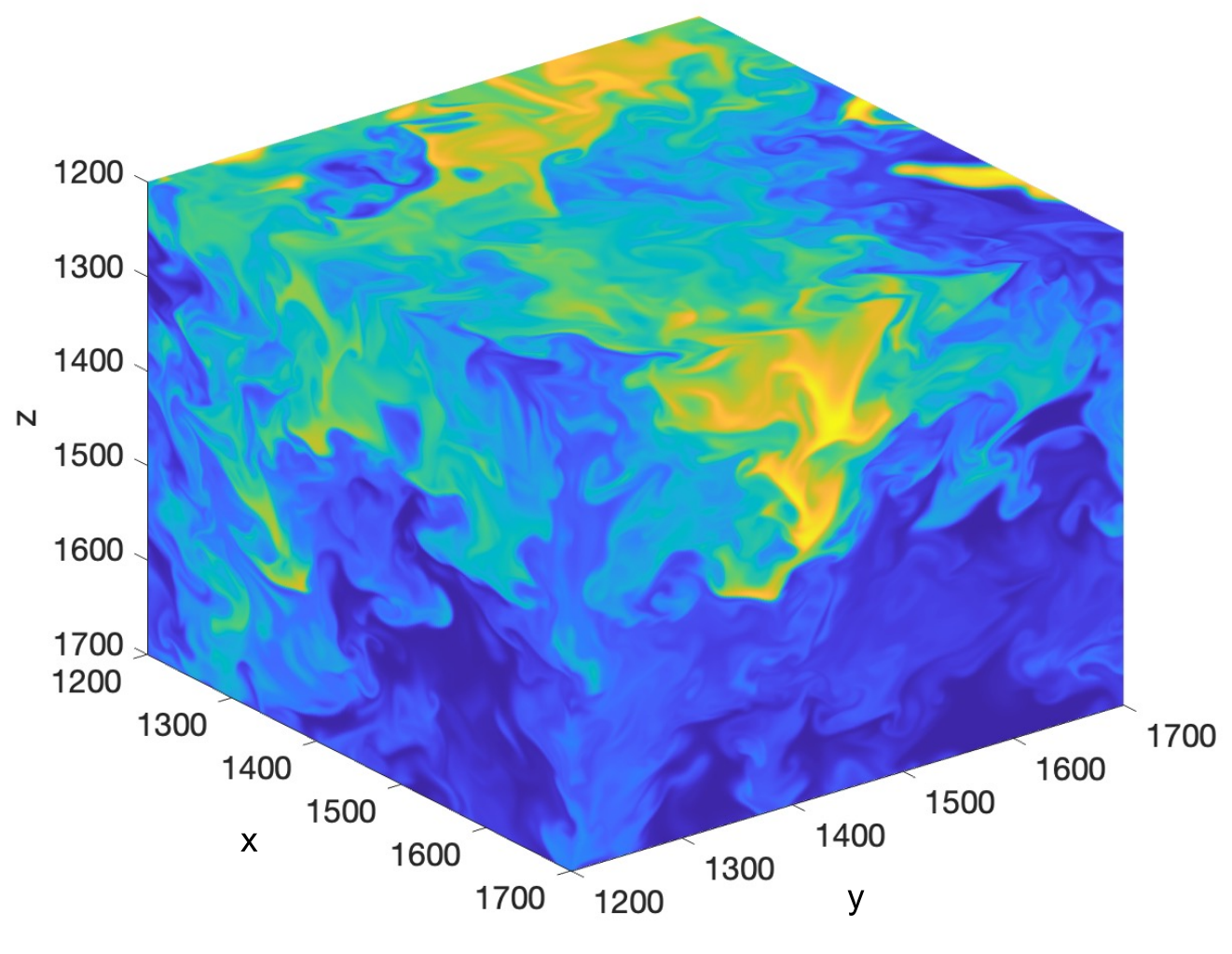}
\caption{Three-dimensional visualization of a subtensor of the Miranda tensor, with indices from 1200 to 1700 in each mode }
\label{fig:miranda3d}
\end{figure}

To validate the effectiveness of our algorithms on the Miranda tensor, we compare visualizations of a slice of the reconstructed tensor computed via \cref{alg:sthosvd,alg:rsthosvd,alg:para_rhosvdkronreuse}. The slices are shown in \cref{fig:miranda_reconstruct}. All three algorithms achieve a reconstruction error of 0.01 and very little difference can be seen between the results of the algorithms and the original data, shown in the first row. Note that we picked \cref{alg:para_rhosvdkronreuse} as a representative algorithm while \cref{alg:para_rsthosvdkron} and \cref{alg:rhosvdkron} with AAO-mTTM generate similar results.

\begin{figure}[!ht]
\centering
\tikzexternaldisable
\tikzsetnextfilename{miranda_reconstruct}
\includegraphics[scale=.8]{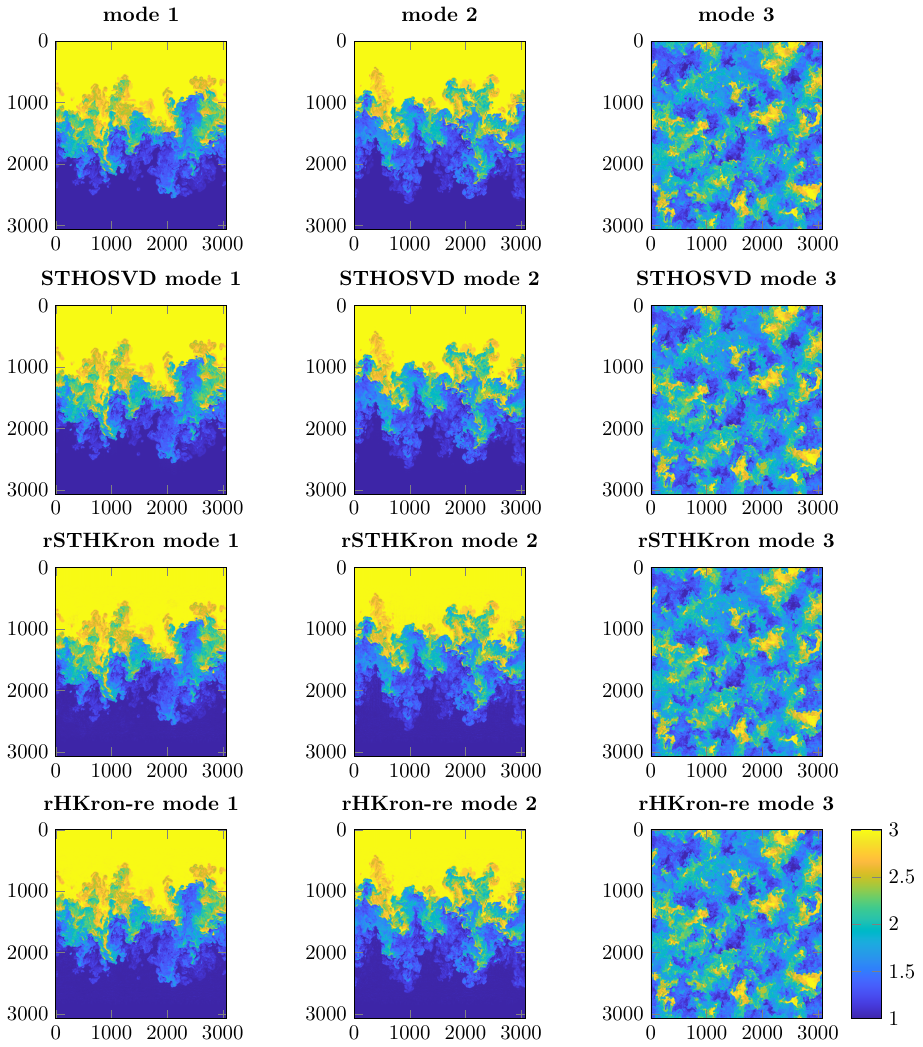}
\caption{1500-th slices of the original Miranda tensor for each mode (first row) compared to reconstructions of the 1500-th slices generated by \STHOSVD{} (second row), \cref{alg:para_rsthosvdkron} (third row), and \cref{alg:para_rhosvdkronreuse} (last row)}
\label{fig:miranda_reconstruct}
\end{figure}

We also visualize the slices of the the original SP tensor as well as of the tensors reconstructed from the decompositions computed by our randomized algorithms in \cref{fig:sp_reconstruct}. We can see that all of the algorithms preserve the significant features of the three slices although some compression artifacts can be seen. One reason why these artifacts are more noticeable, compared to the results we get from the Miranda dataset, is that we are zooming into a much smaller portion ($500\times 500$ instead of $3072\times 3072$) of the total tensor. 

\begin{figure}[!ht]
\centering
\tikzexternaldisable
\tikzsetnextfilename{sp_reconstruct}
\includegraphics[scale=.8]{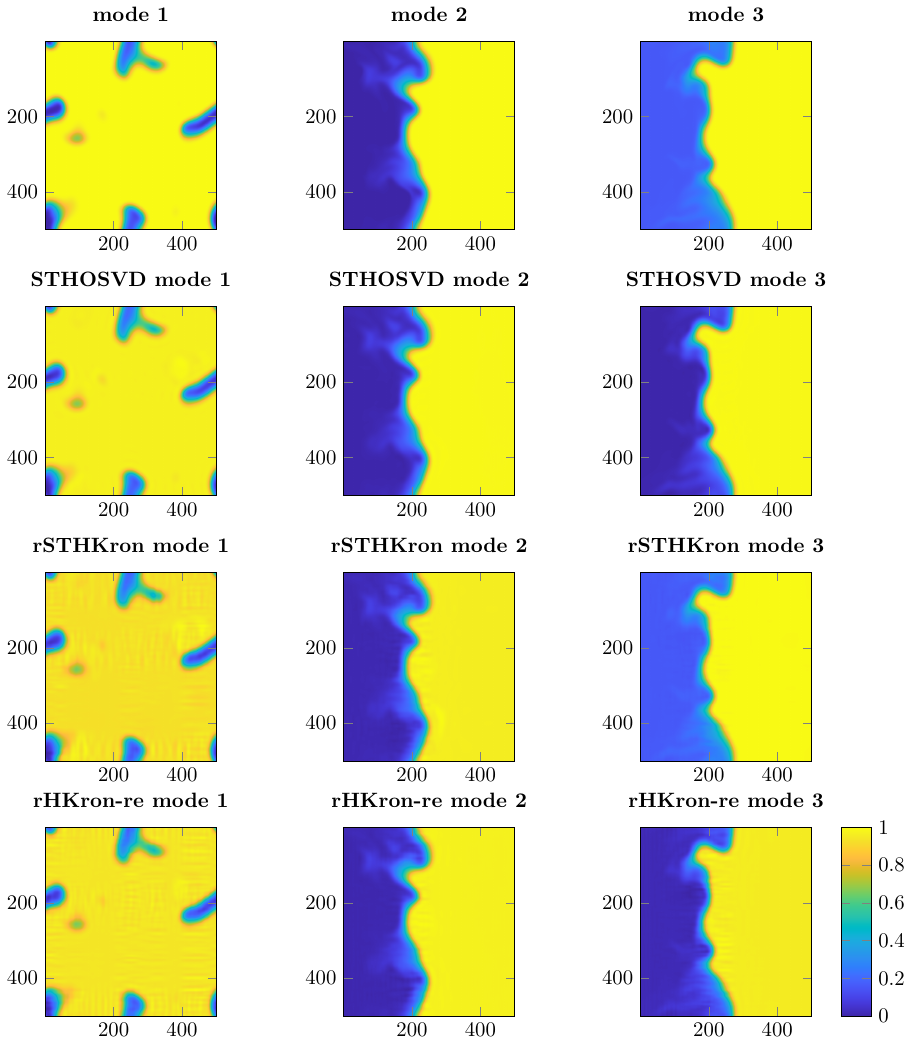}
\caption{250th slices of the original SP tensor for each mode (first row) compared to reconstruction of the 250th slices generated by \STHOSVD{} (second row), \cref{alg:para_rsthosvdkron} (third row), and \cref{alg:para_rhosvdkronreuse} (last row)}
\label{fig:sp_reconstruct}
\end{figure}

\bibliographystyle{siamplain}
\bibliography{refs}

\begin{thebibliography}{10}

\bibitem{randtucker_survey}
{\sc S.~Ahmadi-Asl, S.~Abukhovich, M.~G. Asante-Mensah, A.~Cichocki, A.~H.
  Phan, T.~Tanaka, and I.~Oseledets}, {\em Randomized algorithms for
  computation of {T}ucker decomposition and higher order {SVD} ({HOSVD})}, IEEE
  Access, 9 (2021), pp.~28684--28706,
  \url{https://doi.org/10.1109/ACCESS.2021.3058103}.

\bibitem{ABGKR22-TR-MTTM}
{\sc H.~Al~Daas, G.~Ballard, L.~Grigori, S.~Kumar, and K.~Rouse}, {\em
  Communication lower bounds and optimal algorithms for multiple
  tensor-times-matrix computation}, Tech. Report 2207.10437, arXiv, 2022,
  \url{https://doi.org/10.48550/ARXIV.2207.10437}.

\bibitem{BKK20}
{\sc G.~Ballard, A.~Klinvex, and T.~G. Kolda}, {\em {TuckerMPI}: A parallel
  {C++/MPI} software package for large-scale data compression via the {T}ucker
  tensor decomposition}, ACM Transactions on Mathematical Software, 46 (2020),
  \url{https://doi.org/10.1145/3378445}.

\bibitem{batselier2018computing}
{\sc K.~Batselier, W.~Yu, L.~Daniel, and N.~Wong}, {\em Computing low-rank
  approximations of large-scale matrices with the tensor network randomized
  {SVD}}, SIAM Journal on Matrix Analysis and Applications, 39 (2018),
  pp.~1221--1244, \url{https://doi.org/10.1137/17M1140480}.

\bibitem{battaglino2018practical}
{\sc C.~Battaglino, G.~Ballard, and T.~G. Kolda}, {\em A practical randomized
  {CP} tensor decomposition}, SIAM Journal on Matrix Analysis and Applications,
  39 (2018), pp.~876--901, \url{https://doi.org/10.1137/17M1112303}.

\bibitem{cabot2006reynolds}
{\sc W.~H. Cabot and A.~W. Cook}, {\em Reynolds number effects on
  {R}ayleigh--{T}aylor instability with possible implications for type ia
  supernovae}, Nature Physics, 2 (2006), pp.~562--568,
  \url{https://doi.org/10.1038/nphys361}.

\bibitem{CC+17}
{\sc V.~T. {Chakaravarthy}, J.~W. {Choi}, D.~J. {Joseph}, X.~{Liu},
  P.~{Murali}, Y.~{Sabharwal}, and D.~{Sreedhar}}, {\em On optimizing
  distributed {T}ucker decomposition for dense tensors}, in 2017 IEEE
  International Parallel and Distributed Processing Symposium (IPDPS), May
  2017, pp.~1038--1047, \url{https://doi.org/10.1109/IPDPS.2017.86}.

\bibitem{Chan2007}
{\sc E.~Chan, M.~Heimlich, A.~Purkayastha, and R.~van~de Geijn}, {\em
  {Collective Communication: Theory, Practice, and Experience}}, Concurrency
  and Computation: Practice and Experience, 19 (2007), pp.~1749--1783,
  \url{https://doi.org/10.1002/cpe.1206}.

\bibitem{che2019randomized}
{\sc M.~Che and Y.~Wei}, {\em Randomized algorithms for the approximations of
  {T}ucker and the tensor train decompositions}, Advances in Computational
  Mathematics, 45 (2019), pp.~395--428,
  \url{https://doi.org/10.1007/s10444-018-9622-8}.

\bibitem{che2020computation}
{\sc M.~Che, Y.~Wei, and H.~Yan}, {\em The computation of low multilinear rank
  approximations of tensors via power scheme and random projection}, SIAM
  Journal on Matrix Analysis and Applications, 41 (2020), pp.~605--636,
  \url{https://doi.org/10.1137/19M1237016}.

\bibitem{che2021efficient}
{\sc M.~Che, Y.~Wei, and H.~Yan}, {\em An efficient randomized algorithm for
  computing the approximate {T}ucker decomposition}, Journal of Scientific
  Computing, 88 (2021), pp.~1--29,
  \url{https://doi.org/10.1007/s10915-021-01545-5}.

\bibitem{choi2018high}
{\sc J.~Choi, X.~Liu, and V.~Chakaravarthy}, {\em High-performance dense
  {T}ucker decomposition on {GPU} clusters}, in SC18: International Conference
  for High Performance Computing, Networking, Storage and Analysis, IEEE, 2018,
  pp.~543--553, \url{https://doi.org/10.1109/SC.2018.00045}.

\bibitem{daas2021randomized}
{\sc H.~A. Daas, G.~Ballard, P.~Cazeaux, E.~Hallman, A.~Miedlar, M.~Pasha,
  T.~W. Reid, and A.~K. Saibaba}, {\em Randomized algorithms for rounding in
  the tensor-train format}, arXiv preprint arXiv:2110.04393,  (2021).

\bibitem{de2000multilinear}
{\sc L.~De~Lathauwer, B.~De~Moor, and J.~Vandewalle}, {\em A multilinear
  singular value decomposition}, SIAM journal on Matrix Analysis and
  Applications, 21 (2000), pp.~1253--1278,
  \url{https://doi.org/10.1137/S0895479896305696}.

\bibitem{LDV00}
{\sc L.~De~Lathauwer, B.~De~Moor, and J.~Vandewalle}, {\em On the best rank-1
  and rank-(r1 ,r2 ,. . .,rn) approximation of higher-order tensors}, SIAM
  Journal on Matrix Analysis and Applications, 21 (2000), pp.~1324--1342,
  \url{https://doi.org/10.1137/S0895479898346995}.

\bibitem{erichson2020randomized}
{\sc N.~B. Erichson, K.~Manohar, S.~L. Brunton, and J.~N. Kutz}, {\em
  Randomized {CP} tensor decomposition}, Machine Learning: Science and
  Technology, 1 (2020), p.~025012,
  \url{https://doi.org/10.1088/2632-2153/ab8240}.

\bibitem{halko2011finding}
{\sc N.~Halko, P.-G. Martinsson, and J.~A. Tropp}, {\em Finding structure with
  randomness: Probabilistic algorithms for constructing approximate matrix
  decompositions}, SIAM review, 53 (2011), pp.~217--288,
  \url{https://doi.org/10.1137/090771806}.

\bibitem{horn2012matrix}
{\sc R.~A. Horn and C.~R. Johnson}, {\em Matrix analysis}, Cambridge University
  Press, 2012.

\bibitem{jin2021faster}
{\sc R.~Jin, T.~G. Kolda, and R.~Ward}, {\em Faster {J}ohnson--{L}indenstrauss
  transforms via {Kronecker} products}, Information and Inference: A Journal of
  the IMA, 10 (2021), pp.~1533--1562,
  \url{https://doi.org/10.1093/imaiai/iaaa028}.

\bibitem{KR19}
{\sc O.~Kaya and Y.~Robert}, {\em Computing dense tensor decompositions with
  optimal dimension trees}, Algorithmica, 81 (2019), pp.~2092--2121,
  \url{https://doi.org/10.1007/s00453-018-0525-3}.

\bibitem{KU16}
{\sc O.~Kaya and B.~U\c{c}ar}, {\em High performance parallel algorithms for
  the {T}ucker decomposition of sparse tensors}, in 45th International
  Conference on Parallel Processing (ICPP '16), 2016, pp.~103--112,
  \url{https://doi.org/http://dx.doi.org/10.1109/ICPP.2016.19}.

\bibitem{kolda2009tensor}
{\sc T.~G. Kolda and B.~W. Bader}, {\em Tensor decompositions and
  applications}, SIAM review, 51 (2009), pp.~455--500,
  \url{https://doi.org/10.1137/07070111X}.

\bibitem{Hemanth2016}
{\sc H.~Kolla, X.-Y. Zhao, J.~H. Chen, and N.~Swaminathan}, {\em Velocity and
  reactive scalar dissipation spectra in turbulent premixed flames}, Combustion
  Science and Technology, 188 (2016), pp.~1424--1439,
  \url{https://doi.org/10.1080/00102202.2016.1197211}.

\bibitem{LFB21}
{\sc Z.~Li, Q.~Fang, and G.~Ballard}, {\em Parallel {Tucker} decomposition with
  numerically accurate {SVD}}, in 50th International Conference on Parallel
  Processing, ICPP '21, New York, NY, USA, August 2021, ACM, p.~11,
  \url{https://doi.org/10.1145/3472456.3472472}.

\bibitem{minster2020randomized}
{\sc R.~Minster, A.~K. Saibaba, and M.~E. Kilmer}, {\em Randomized algorithms
  for low-rank tensor decompositions in the {T}ucker format}, SIAM Journal on
  Mathematics of Data Science, 2 (2020), pp.~189--215,
  \url{https://doi.org/10.1137/19M1261043}.

\bibitem{PTC13a}
{\sc A.-H. Phan, P.~Tichavsky, and A.~Cichocki}, {\em Fast alternating {LS}
  algorithms for high order {CANDECOMP/PARAFAC} tensor factorizations}, IEEE
  Transactions on Signal Processing, 61 (2013), pp.~4834--4846,
  \url{https://doi.org/10.1109/TSP.2013.2269903}.

\bibitem{sun2020low}
{\sc Y.~Sun, Y.~Guo, C.~Luo, J.~Tropp, and M.~Udell}, {\em Low-rank {Tucker}
  approximation of a tensor from streaming data}, SIAM Journal on Mathematics
  of Data Science, 2 (2020), pp.~1123--1150,
  \url{https://doi.org/10.1137/19M1257718}.

\bibitem{tropp2011improved}
{\sc J.~A. Tropp}, {\em Improved analysis of the subsampled randomized
  {H}adamard transform}, Advances in Adaptive Data Analysis, 3 (2011),
  pp.~115--126, \url{https://doi.org/10.1142/S1793536911000787}.

\bibitem{vann2012new}
{\sc N.~Vannieuwenhoven, R.~Vandebril, and K.~Meerbergen}, {\em A new
  truncation strategy for the higher-order singular value decomposition}, SIAM
  Journal on Scientific Computing, 34 (2012), pp.~A1027--A1052,
  \url{https://doi.org/10.1137/110836067}.

\bibitem{wolf2019low}
{\sc A.~S. J.~W. Wolf}, {\em Low rank tensor decompositions for high
  dimensional data approximation, recovery and prediction}, PhD thesis,
  Technical University of Berlin, 2019,
  \url{https://doi.org/10.14279/depositonce-8109}.

\bibitem{matrixproof}
{\sc F.~Woolfe, E.~Liberty, V.~Rokhlin, and M.~Tygert}, {\em A fast randomized
  algorithm for the approximation of matrices}, tech. report, Tech. rep. 1380,
  Yale University, Department of Computer Science, 2007.

\bibitem{zhao2020sdrbench}
{\sc K.~Zhao, S.~Di, X.~Lian, S.~Li, D.~Tao, J.~Bessac, Z.~Chen, and
  F.~Cappello}, {\em {SDRB}ench: Scientific data reduction benchmark for lossy
  compressors}, in 2020 IEEE International Conference on Big Data (Big Data),
  IEEE, 2020, pp.~2716--2724,
  \url{https://doi.org/10.1109/BigData50022.2020.9378449}.

\bibitem{zhou2014decomposition}
{\sc G.~Zhou, A.~Cichocki, and S.~Xie}, {\em Decomposition of big tensors with
  low multilinear rank}, arXiv preprint arXiv:1412.1885,  (2014),
  \url{https://doi.org/10.48550/arXiv.1412.1885}.

\end{thebibliography}

\end{document}